\numberwithin{equation}{section}
\newcommand{\hsp}[1]{{\hbox{\hspace{#1}}}}
\def\a{\alpha}
\def\d{\delta}
\def\z{\zeta}
\def\m{\mu}
\def\n{\nu}
\def\s{\sigma}
\def\w{\omega}
\def\tAd{\mathrm{Ad}} 
\def\tAut{\mathrm{Aut}}
\def\fb{\mathfrak{b}} 
\def\bC{\mathbb C}
 \def\tcpt{\mathrm{cpt}}
\def\cD{\mathcal D}
 \def\tdim{\mathrm{dim}}
\def\tEnd{\mathrm{End}} 
 \def\fe{\mathfrak{e}}
\def\cF{\mathcal F} 
\def\ff{\mathfrak{f}}
\def\tFlag{\mathrm{Flag}}
\def\sG{\mathscr{G}}
\def\cG{\mathcal G} 
\def\tGr{\mathrm{Gr}}
\def\fg{{\mathfrak{g}}}
\def\tHom{\mathrm{Hom}}
\def\bh{\mathbf{h}}
\def\bi{\mathbf{i}}
\def\cL{\mathcal L}
\def\cM{\mathcal M}
\def\tmax{\mathrm{max}}
\def\bP{\mathbb P} 
\def\sP{\mathscr{P}} 
\def\fp{\mathfrak{p}}
\def\bQ{\mathbb Q}
\def\bR{\mathbb R}
\def\trank{\mathrm{rank}}
 \def\sfs{\mathsf{s}}
\def\tSL{\mathrm{SL}} \def\tSO{\mathrm{SO}}
\def\tSp{\mathrm{Sp}} 
 \def\tSym{\mathrm{Sym}}
\def\fsl{\mathfrak{sl}} \def\fso{\mathfrak{so}} 
\def\fsp{\mathfrak{sp}} 
 \def\ttT{\mathtt{T}}
\def\ft{\mathfrak{t}}
 \def\sV{\mathscr{V}}
 \def\bZ{\mathbb Z}
\def\half{\tfrac{1}{2}}
\def\third{\tfrac{1}{3}}
\def\one{\mathbbm{1}}
\def\tand{\quad\hbox{and}\quad}
\def\dfn{\stackrel{\hbox{\tiny{dfn}}}{=}}
\def\sbullet{{\hbox{\tiny{$\bullet$}}}}
\def\inj{\hookrightarrow}
\def\op{\oplus}
\def\ot{\otimes}
\def\tw{\hbox{\small $\bigwedge$}}
\def\wtL{{\Lambda_\mathrm{wt}}}
\def\rtL{{\Lambda_\mathrm{rt}}}
\newcounter{numcnt}
\newcounter{cnt}
\newcounter{acnt}
\newenvironment{a_list}{ 
  \begin{list}{{\emph{(\alph{acnt})}}}
   {\usecounter{acnt} \setlength{\itemsep}{3pt}
    \setlength{\leftmargin}{25pt} \setlength{\labelwidth}{20pt} }
   }
   {\end{list}}
\newcounter{Acnt}
\newcounter{icnt}
\newenvironment{i_list_nem}{ 
  \begin{list}{{(\roman{icnt})}}
   {\usecounter{icnt} \setlength{\itemsep}{3pt}
    \setlength{\leftmargin}{25pt} \setlength{\labelwidth}{20pt} }
   }
   {\end{list}}
\newcounter{Icnt}
\newcounter{exam_cnt}
\newcounter{mccnt}
\newenvironment{bcirclist}{ 
  \begin{list}{\boldmath$\circ$\unboldmath}
   {\usecounter{cnt} \setlength{\itemsep}{2pt}
    \setlength{\leftmargin}{15pt} \setlength{\labelwidth}{20pt} }
   }
   {\end{list}}
\newtheorem{corollary}[equation]{Corollary}
\newtheorem*{corollary*}{Corollary}
\newtheorem{lemma}[equation]{Lemma}
\newtheorem*{lemma*}{Lemma}
\newtheorem{proposition}[equation]{Proposition}
\newtheorem*{proposition*}{Proposition}
\newtheorem{theorem}[equation]{Theorem}
\newtheorem*{theorem*}{Theorem}
\theoremstyle{remark}
\newtheorem*{assume*}{Assume}
\newtheorem*{claim*}{Claim}
\newtheorem{definition}[equation]{Definition}
\newtheorem*{definition*}{Definition}
\newtheorem{example}[equation]{Example}
\newtheorem*{example*}{Example}
\newtheorem*{hint*}{Hint}
\newtheorem*{notation*}{Notation}
\newtheorem*{question*}{Question}
\newtheorem*{answer*}{Answer}
\newtheorem{remark}[equation]{Remark}
\newtheorem*{remark*}{Remark}
\newtheorem*{remarks*}{Remarks}
\newtheorem*{fact*}{Fact}
\numberwithin{HWeq}{section}
\theoremstyle{definition}
\theoremstyle{remark}
\begin{document}
\title[Principal Hodge representations]{Principal Hodge representations}
\author[Robles]{C. Robles}
\email{robles@math.tamu.edu}
\address{Mathematics Department, Mail-stop 3368, Texas A\&M University, College Station, TX  77843-3368} 
\thanks{Robles is partially supported by NSF DMS-1006353.}
\date{\today}
\begin{abstract}
We study Hodge representations of absolutely simple $\bQ$--algebraic groups with Hodge numbers $\bh = (1,1,\ldots,1)$.  For those groups that are not of type A, we give a classification of the $\bR$--irreducible representations; a similar classification for type A does not seem possible.
\end{abstract}
\keywords{Hodge group, Hodge representation, Hodge structure, Mumford--Tate group}
\subjclass[2010]
{
 20G05,  
 58A14. 
}
\maketitle
\setcounter{tocdepth}{1}

\section{Introduction}

This paper is a study of Hodge structures with Hodge numbers $\bh = (1,1,\ldots,1)$.  Examples of such Hodge structures include the cohomology group $H^1(X,\bQ)$ of an elliptic curve, the cohomology group $H^3(X,\bQ)$ of a mirror quintic variety \cite{MR2457736}, and the weight component $W_6 H^6(X,\bQ)$ of the cohomology group of a general fibre of a family of quasi-projective $6$-folds studied by Dettweiler and Reiter \cite{MR2660679, KP2012}.  More precisely, the article addresses the question: what are the Hodge representations (\S\ref{S:HRdfn}) with Hodge numbers $\bh = (1,1,\ldots,1)$?  To simplify exposition, we call such representations `principal,' cf. Remark \ref{R:princ}.  The principal Hodge representations are necessarily of Calabi-Yau type; Calabi-Yau Hodge representations of (absolutely) simple Hodge groups are classified in \cite[\S B]{schubVHS}.  I emphasize that the classification of principal Hodge representations is essentially a representation theoretic question; I will not directly address the Hodge theory of complex algebraic varieties.

\subsection*{Motivation}

Very roughly, the classification question is motived by the question: Does a Hodge domain admit a distinguished realization as a Mumford--Tate domain.  A more precise statement of the question, and its relation to this work, is given in Section \ref{S:context}?

\subsection*{Contents}
The main results of this paper are characterizations of the principal Hodge representations $(V,\varphi)$ of a $\bQ$--algebraic, absolutely simple\footnote{$G$ is absolutely simple if $G_\bC$ is simple.} group $G$ in the case that (i) the vector space $V_\bR$ is an irreducible $G_\bR$--module, and (ii) the Lie algebra $\fg_\bC \not= \fsl_{r+1}\bC$.  The principal Hodge representations of symplectic groups (type C) are identified in Theorem \ref{T:C}, those of orthogonal groups (types B and D) are identified in Theorems \ref{T:B} and \ref{T:D}, and those of the exceptional groups of type $G_2$ are identified in Theorem \ref{T:G2}.  The exceptional groups of types $E_r$ and $F_4$ admit no such principal Hodge representations (Corollary \ref{C:E8F4} and Theorem \ref{T:E}).  (For the exceptional groups, we may drop the assumption (ii) that $V_\bR$ is irreducible.)

I am unable to give a similar characterization in the case that $\fg_\bC = \fsl_{r+1}\bC$ (type A).  A comparison of the theorems above with the results and examples of \S\ref{S:A} suggest that no such characterization is available in this case: given a representation $\pi : G \to \tAut(V,Q)$, the circle $\varphi$ giving $\pi$ the structure of a principal Hodge representation is essentially unique (if it exists) for types B, C, D and G; in contrast, the circle is far from unique in type A, cf. \S\ref{S:Aeg}.  Nonetheless, we may make some general observations (cf. \S\ref{S:Arest}); for example, if $G$ admits a principal Hodge representation (with $V_\bR$ not necessarily irreducible), then the rank $r$ is necessarily odd (Lemma \ref{L:r+1}).  The rank one, three and five cases are worked out in \S\ref{S:rank1}, \ref{S:rank3} and \ref{S:rank5}, respectively, and examples of rank seven and nine are considered in \S\ref{S:Aeg}.

\subsection*{Acknowledgements}
I thank P. Griffiths for bringing the classification question to my attention.

\section{Hodge representations}

\subsection{Definition} \label{S:HRdfn}

Let $V$ be a rational vector space, $w \in \bZ$, and let $Q = V \times V \to \bQ$ be a nondegenerate bilinear form satisfying $Q(u,v) = (-1)^wQ(v,u)$, for all $u,v\in V$.  A \emph{polarized Hodge structure of weight $w$ on $V$} is given by a nonconstant homomorphism $\phi : S^1 \to \tAut(V_\bR,Q)$ of $\bR$--algebraic groups with the properties that $\phi(-1) = (-1)^w\one$, and $Q(v,\phi(\bi)\overline v) > 0$ for all $0\not= v \in V_\bC$.  (Here, $\bi = \sqrt{-1}$.)  The associated \emph{Hodge decomposition} $V_\bC = \op_{p+q=w} V^{p,q}$ is given by $V^{p,q} = \{ v \in V_\bC \ | \ \phi(z)v = z^{p-q} v \,,\ \forall \ z\in S^1\}$.  The \emph{Hodge numbers} $\bh = (h^{p,q})$ of $\phi$ are $h^{p,q} = \tdim_\bC\,V^{p,q}$.

Let $G$ be a $\bQ$--algebraic group, and let $\pi : G \to \tAut(V,Q)$ be a homomorphism of $\bQ$--algebraic groups, such that $\pi_*:\fg_\bC \to \tEnd(V,Q)$ is injective.  Let $\varphi : S^1 \to G_\bR$ be a nonconstant homomorphism  of $\bR$--algebraic groups.  Then $(V,Q,\pi,\varphi)$   is a \emph{Hodge representation of $G$} if $\pi\circ\varphi$ is a $Q$--polarized Hodge structure on $V$ of weight $w$.  Any $\bQ$--algebraic group admitting a Hodge representation is a \emph{Hodge group}.  A Hodge group $G$ is necessarily reductive \cite[(I.B.6)]{MR2918237}, and $G_\bR$ contains a compact, maximal torus $T \supset \varphi(S^1)$; that is, $\tdim_\bR(T) = \trank(G_\bR)$, cf. \cite[(IV.A.2)]{MR2918237}.

In general, the bilinear form $Q$ and representation $\pi$ will be dropped from the notation, and the Hodge representation will be denoted by $(V,\varphi)$.

\subsection{Context} \label{S:context}

Hodge representations were introduced by Green, Griffiths and Kerr in \cite{MR2918237} to determine: (i) which reductive, $\bQ$--algebraic groups $G$ admit the structure of a Mumford--Tate group, and (ii) what one can say about various realizations of $G$ as a Mumford--Tate group, and the associated Mumford--Tate domains (which generalize period domains).  Mumford--Tate groups are the symmetry groups of Hodge theory:  the \emph{Mumford--Tate group} $G_\phi \subset \cG = \tAut(V,Q)$ of a polarized Hodge structure $(V,Q,\phi)$ is precisely the $\bQ$--algebraic group preserving the Hodge tensors \cite[(I.B.1)]{MR2918237}.  Moreover, $\phi(S^1) \subset G_\phi(\bR)$, so that $(V,Q,\phi)$ is a Hodge representation of $G_\phi$, and the Mumford--Tate group is a Hodge group.  

Mumford--Tate domains arise as follows.  Let $F^p = \op_{q\ge p} V^{q,w-q}$ denote the Hodge filtration of $V_\bC = V \ot_\bQ \bC$ induced by the Hodge structure $\phi$.  Define Hodge numbers $f^p = \tdim_\bC F^p$ and $\mathbf{f} = (f^\sbullet)$.  Then $F^\sbullet$ is an element of the isotropic flag variety $\tFlag_\mathbf{f}^Q(V_\bC)$, and the latter is a $\cG_\bC = \tAut(V_\bC,Q)$--homogeneous manifold.  (If the weight $w$ is even, then $\cG_\bC = \tSO(V_\bC)$ is an orthogonal group; if $w$ is odd, then $\cG_\bC = \tSp(V_\bC)$ is a symplectic group.)  The \emph{period domain} $\cD \subset \tFlag_\mathbf{f}^Q(V_\bC)$ is the $\cG_\bR = \tAut(V_\bR,Q)$--orbit of $F^\sbullet$.  The \emph{Mumford--Tate domain} $D\subset\cD$ is the $G_\phi(\bR)$--orbit of $F^\sbullet$.  

As a $G_\phi(\bR)$--homogeneous manifold $D \simeq G_\phi(\bR)/R$; here $R$ is the centralizer of the circle $\phi(S^1)$ and contains the compact, maximal torus $T$, cf. \cite[II.A]{MR2918237}.  The homogeneous manifold $G_\phi(\bR)/R$ is a \emph{Hodge domain}.  One may see from this discussion that one subtlety that arises in the subject is the fact that a Hodge domain will admit various realizations as a Mumford--Tate domain (and with inequivalent $G_\phi(\bR)$--homogeneous complex structures).  One would like to know if a given Hodge domain admits a distinguished realization as a Mumford--Tate domain.\footnote{By analogy, the distinguished realization of a rational homogeneous variety $\sG/\sP$ is the unique minimal homogeneous embedding $\sG/\sP \inj \bP\sV$.  The Pl\"ucker embedding of the Grassmannian $\tGr(k,\bC^n) \simeq \tSL(\bC^n)/\sP_k$ is an example.}  The classification of principal Hodge representations is motivated by this question.

A Hodge representation $(V,\phi)$ of $G$ is principal if and only if $f^{p+1} = f^p-1$, cf. Definition \ref{D:principal}.  Equivalently, the period domain $\cD = \tFlag_\mathbf{f}^Q(V_\bC)$ is a full flag variety, and the horizontal distribution (known as Griffiths' transversality or the infinitesimal period relation) on the period domain $\cD$ is bracket--generating.  In this case, the centralizer $R$ of the circle $\phi(S^1)$ is the torus $T$, cf. Remark \ref{R:borel}.  So the principal Hodge representation yields a distinguished realization of the Hodge domain $G_\phi(\bR)/T$ as a Mumford--Tate domain $D \subset \cD$.

\subsection{The grading element \boldmath$\ttT_\varphi$\unboldmath} \label{S:GE}

Fix a Hodge representation $(V,\varphi)$.  To the circle $\varphi(S^1)$ is naturally associated a semisimple $\ttT_\varphi \in \bi\fg_\bR$.  What follows is a terse review of $\ttT_\varphi$; see \cite[\S2.3]{schubVHS} for details.  The semisimple $\ttT_\varphi$ has the property that the $\ttT_\varphi$--eigenvalues of $V_\bC$ are rational numbers (in fact, elements of $\half\bZ$), and the $\ttT_\varphi$--eigenspace decomposition 
$$
  V_\bC \ = \ \bigoplus_{\ell \in \bQ}\, V_{\ell} \,,\quad
  V_\ell \ \dfn \ \{ v \in V_\bC \ | \ \ttT_\varphi(v) = \ell v \}
$$
is the Hodge decomposition: that is, $V^{p,q} = V_\ell$, where $\ell = (p-q)/2\in\half\bZ$.  Additionally, the $\ttT_\varphi$--eigenvalues of $\fg_\bC$ are integers, and the $\ttT_\varphi$--eigenspace decomposition 
$$
  \fg_\bC \ = \ \bigoplus_{\ell\in\bZ}\, \fg_\ell\,,\quad
  \fg_\ell \ \dfn \ \{ \z\in\fg_\bC \ | \ [\ttT^\ell,\z] = \ell\z\} \,,
$$ 
is the Hodge decomposition associated to the weight zero Hodge structure $\tAd\circ\pi$ on $\fg$: that is, $\fg^{\ell,-\ell} = \fg_\ell$.  Moreover, 
$$
  \fg_\ell( V_q) \ \subset \ V_{q+\ell} \tand
  [\fg_k,\fg_\ell] \ \subset \ \fg_{k+\ell} \,.
$$
In particular, the Hodge filtration $F^\sbullet$ of $V_\bC$, defined by $F^p = V_{\ge p}$, is stabilized by the parabolic subalgebra 
\begin{equation} \label{E:p}
  \fp \ \dfn \ \fg_{\ge0} 
\end{equation}
of $\fg_\bC$.  

Fix a compact maximal torus $T \subset G_\bR$ containing the circle $\varphi(S^1)$.  Let $\ft \subset \fg_\bR$ and $\ft_\bC \subset \fg_\bC$ denote the associated Lie algebras.  Then $\ttT_\varphi \in \bi\ft$.  Let $\Delta = \Delta(\fg_\bC,\ft_\bC) \subset \ft_\bC^*$ denote the roots of $\ft_\bC$.  Let $\rtL \subset \wtL \subset \ft_\bC^*$ denote the root and weight lattices.  The connected real groups $G_\bR$ with Lie algebra $\fg_\bR$ are indexed by sub-lattices $\rtL \subset \Lambda \subset \wtL$.  The torus is 
$$
  T \ = \ \ft / \Lambda^* \,,\quad\hbox{where}\quad
  \Lambda^* \ \dfn\ \tHom(\Lambda,2\pi\bi\bZ)\,,
$$
and the weights $\Lambda(V_\bC)$ of $V_\bC$ are contained in $\Lambda$.  (Conversely, if $U$ is an irreducible $\fg_\bC$--module of highest weight $\m \in \Lambda$, then $U$ is a $G_\bC$--module.)  Any $\ttT \in \tHom(\Lambda,\half\bZ)$ determines a circle $\varphi : S^1 \to T$ such that $\ttT_\varphi = \ttT$.

Notice that $\ft_\bC \subset \fg_0 \subset \fp$; so we may select a Borel subalgebra $\fb$ so that $\ft_\bC \subset \fb \subset \fp$.  Define positive roots by 
$$
  \Delta^+ \ \dfn \ \Delta(\fb) \ = \ 
  \{ \a \in \Delta \ | \ \fg^\a \subset \fb \} \,.
$$
Let $\Sigma = \{ \s_1,\ldots,\s_r\}$ denote the simple roots of $\Delta^+$, and let $\{ \ttT^1 , \ldots, \ttT^r\} \subset \tHom(\rtL,\bZ) \subset \ft_\bC$ denote the dual basis of $\ft_\bC$; that is, $\sigma_i(\ttT^j) = \d^j_i$.  The lattice $\tHom(\rtL,\bZ)$ is the \emph{set of grading elements}.  The fact that the $\ttT_\varphi$--eigenvalues of $\fg_\bC$ are integers implies $\ttT_\varphi \in \tHom(\rtL,\bZ)$; that is, $\ttT_\varphi$ is a grading element.  Therefore, $\ttT_\varphi$ is necessarily of the form $n_i \ttT^i$ for some $n_i \in \bZ$.  The condition $\fg^{\s_i} \subset \fb \subset \fp = \fg_{\ge 0}$ is equivalent to 
\begin{equation} \label{E:n>=0}
  \s_i(\ttT_\varphi) \ = \ n_i \ \ge \ 0 \,,
\end{equation}
for all $1\le i\le r$.

\subsection{Real, complex and quaternionic representations}  \label{S:RCQ}
Suppose that $V_\bR$ is an irreducible $G_\bR$--module.  Then there exists an irreducible $G_\bC$--module $U$ such that one of the following holds.
\begin{bcirclist}
\item As a $G_\bR$--module $U$ is \emph{real}: $V_\bC = U \simeq U^*$.
\item As a $G_\bR$--module $U$ is \emph{quaternionic}: $V_\bC = U \op U^*$ and $U \simeq U^*$.
\item As a $G_\bR$--module $U$ is \emph{complex}: $V_\bC = U \op U^*$ and $U \not\simeq U^*$.
\end{bcirclist}
Notice that $U$ is complex if and only if $U$ is not self-dual.  The real and quaternionic cases are distinguished as follows.  Define 
\begin{equation} \label{E:Tcpt}
  \ttT^\tcpt \ \dfn \ 2 \sum_{n_i \in 2\bZ} \ttT^i \,.
\end{equation}
Let $\m \in \Lambda(U) \subset \Lambda$ be the highest weight of $U$.  If $U \simeq U^*$, then 
\begin{equation} \label{E:RQtest}
  \hbox{\emph{$U$ is real (resp., quaternionic) if and only if 
  $\m(\ttT^\tcpt)$ is even (resp., odd),}}
\end{equation}
cf. \cite[(IV.E.4)]{MR2918237}.  

\subsection{Principal Hodge representations} \label{S:PHR}
Let 
$$
  m \ \dfn \ \tmax\{ q \in \bQ \ | \ V_q \not=0 \} \ \in\ \half \bZ \,.
$$
Then the $\ttT_\varphi$--eigenspace decomposition of $V_\bC$ is 
$$
  V_\bC \ = \ V_m \,\op\, V_{m-1}\,\op\cdots\op\,V_{1-m}\,\op\,V_{-m}\,,
$$
and the Hodge numbers are $\bh = (h_m,h_{m-1},\ldots,h_{1-m},h_{-m})$, where 
$h_\ell = \tdim_\bC V_\ell$.

\begin{definition} \label{D:principal}
The Hodge representation $(V,\varphi)$ is \emph{principal} if the Hodge numbers are $\bh = (1,1,\ldots,1,1)$; that is $h_\ell = 1$ for all $-m \le \ell \le m$.
\end{definition}

\begin{remark} \label{R:princ}
The Hodge representation $(V,\varphi)$ is principle if and only if $\ttT_\varphi$ is the mono-semisimple element of a principal $\fsl_2\bC \subset \fsl(V_\bC)$.  See \cite{MR0114875}.
\end{remark}

If $(V,\varphi)$ is principal, then it is necessarily the case that 
\begin{eqnarray}
  \label{E:Tmf} &
  \hbox{\emph{the $\ttT_\varphi$--eigenvalues of $V_\bC$ 
       are multiplicity-free,}} 
  & \\
  \label{E:m_dim}
  & \hbox{and}\quad
  m \ = \ \half \left( \tdim_\bC V_\bC \,-\, 1 \right) \,. &
\end{eqnarray}
The eigenvalues are determined by the weights $\Lambda(V_\bC) \subset \ft_\bC^*$ of $V_\bC$.  Precisely, the $\ttT_\varphi$--eigenvalues of $V_\bC$ are $\{ \lambda(\ttT_\varphi) \ | \ \lambda \in \Lambda(V_\bC) \}$.  In particular, $(V,\varphi)$ is principal if and only if
\begin{equation} \label{E:ev}
  \{ \lambda(\ttT_\varphi) \ | \ \lambda \in \Lambda(V_\bC) \}
  \ = \ \{ m \,,\, m-1 \,,\, m-2 \,, \ldots ,\, 2-m \,,\, 1-m \,,\, -m \} 
\end{equation}
and \eqref{E:m_dim} holds.  

Note that the $\ttT_\varphi$--eigenvalues of $V_\bC$ are multiplicity-free only if 
\begin{equation}\label{E:mf}
  \hbox{\emph{$V_\bC$ is weight multiplicity-free.}}
\end{equation}
Recall that $\Lambda(U^*) = -\Lambda(U)$.  So, if $V_\bC = U \op U^*$, then $\Lambda(V_\bC) = \Lambda(U) \cup -\Lambda(U)$.  The necessary condition \eqref{E:mf} implies
\begin{eqnarray}
  \label{E:RC} &
  \hbox{\emph{The $\fg_\bC$--module $U$ is either real or complex.}} & \\
  \label{E:cpx} &
  \hbox{\emph{If $U$ is complex, then $\Lambda(U) \cap -\Lambda(U) = \emptyset$.}} &
\end{eqnarray}

\subsection{Multiplicity-free representations} \label{S:mf}

Suppose that $(V,\varphi)$ is a principal Hodge representation, and that $V^0 \subset V_\bR$ is an irreducible $G_\bR$--submodule.  Let $U$ be the corresponding irreducible $\fg_\bC$--module, cf. \S\ref{S:RCQ}.  By \eqref{E:mf}, it is necessarily the case that 
\begin{equation}\label{E:Umf}
  \hbox{\emph{$U$ is weight multiplicity-free.}}
\end{equation}  
The weight multiplicity-free representations have been classified by \cite[Theorem 4.6.3]{MR1321638}, in the case that $\fg_\bC$ is simple.  Let $\w_1,\ldots,\w_r$ denote the fundamental weights of $\fg_\bC$.

\begin{theorem} \label{T:mf}
Let $\fg_\bC$ be a simple, complex Lie group.  The irreducible, weight multiplicity-free representations $U$ of $\fg_\bC$, with highest weight $\m$, are as follows:
\begin{a_list}
  \item If $\fg_\bC = \fsl_{r+1}\bC$, then $U$ is one of
   $\tw^k\bC^{r+1}$, and $\m = \w_k$;
   $\tSym^a\bC^{r+1}$, and $\m = a\,\w_1$; or
   $\tSym^a(\bC^{r+1})^*$, and $\m = a\,\w_r$.
  \item If $\fg_\bC = \fso_{2r+1}\bC$, then $U$ is either the standard representation 
    $\bC^{2r+1}$, with $\m = \w_1$; or the spin representation, with $\m = \w_r$. 
  \item If $\fg_\bC = \fsp_{2r}\bC$, then either $U$ is the standard representation 
    $\bC^{2r}$, with $\m = \w_1$; 
    or $r \in \{2,3\}$ and $U \subset \tw^r\bC^{2r}$, with $\m = \w_r$.
  \item If $\fg_\bC = \fso_{2r}\bC$, then $U$ is either the standard representation 
    $\bC^{2r}$, with $\m = \w_1$; or one of the spin representations, 
    with $\m = \w_{r-1},\w_r$.
  \item If $\fg_\bC$ is the exceptional Lie algebra $\fe_6(\bC)$, 
  then $\m = \w_1,\w_6$ and $\tdim_\bC U = 27$; 
  if $\fg_\bC = \fe_7(\bC)$, then $\mu = \w_7$ and $\tdim_\bC U = 56$.
  If $\fg_\bC$ is the exceptional Lie algebra $\fg_2(\bC)$, then $U = \bC^7$ and $\m=\w_1$.
  There are no weight multiplicity-free representations for the exceptional Lie algebras 
  $\fe_8$ and $\ff_4$.
\end{a_list}
\end{theorem}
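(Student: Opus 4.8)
The plan is to treat \emph{weight multiplicity-free} (wmf) as a property inherited by Levi subalgebras, to reduce to a base-case analysis in ranks one and two, to propagate the resulting constraints up the Dynkin diagram, and finally to verify directly that the listed modules are wmf. Throughout write $U = U_\m$ for the irreducible module of highest weight $\m$, and note that $U$ is wmf exactly when $\tdim_\bC U$ equals the number $|\Lambda(U)|$ of distinct weights. The organizing observation is that wmf passes to Levi subalgebras: if $\fl \subset \fg_\bC$ is generated by $\ft_\bC$ and the root spaces of a subset $S \subset \Sigma$, then $\fl$ shares the Cartan $\ft_\bC$, so the $\ft_\bC$--weights of $U$ and their multiplicities are unchanged under restriction. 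Writing $U|_\fl = \op_j\, U_j$ as a sum of irreducible $\fl$--modules, the hypothesis that every weight of $U$ has multiplicity at most one forces the $U_j$ to have pairwise disjoint weight supports and each $U_j$ to be a wmf $\fl$--module. In particular every rank-two Levi --- a copy of $A_1{\times}A_1$, $A_2$, $B_2$ or $G_2$ --- sees only wmf modules.

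For the base cases: every $\fsl_2$--module is wmf, and for $A_2$, $B_2\cong C_2$ and $G_2$ I would compute weight multiplicities directly (Freudenthal's recursion suffices) to find that the wmf irreducibles are exactly the small ones --- in $A_2$ the symmetric powers $a\w_1,a\w_2$ (hence $\tw^1,\tw^2$); in $B_2$ the standard and spin modules $\w_1,\w_2$; and in $G_2$ only the seven-dimensional $\w_1$. The governing obstruction is that the adjoint representation of any simple $\fg_\bC$ of rank $r\ge 2$ carries the zero weight with multiplicity $r\ge 2$, so it is never wmf, and more generally a highest weight that sits too deep in the dominant cone acquires interior multiplicities. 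This already settles $G_2$ and pins down the rank-two list.

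The propagation step shows that $\m$ must be a multiple $a\w_k$ of a single fundamental weight, that $a=1$ outside type A, and that in type A the multiplier exceeds one only at the end nodes $k\in\{1,r\}$. Each assertion is proved by exhibiting an explicit weight of multiplicity at least two --- a zero or near-minimal weight --- either by a direct Freudenthal computation or after restricting to a suitably chosen Levi and quoting the base case; for instance two nonzero coefficients on adjacent nodes produce a forbidden $A_2$- or $B_2$-constituent of adjoint type, and $a\ge 2$ at an interior node forces a repeated weight. It then remains to decide which single fundamental weights survive. The minuscule weights pass automatically (all weights lie in one $W$--orbit); the interior fundamentals of $B_r,C_r,D_r$ are eliminated by the same zero-weight count (the primitive part of $\tw^k\bC^{2r}$ carries a multiple zero weight once $k$ grows), leaving the quasi-minuscule standard $\w_1$ of $B_r$, the seven-dimensional $\w_1$ of $G_2$, and the low-rank exceptions $\w_r$ of $C_2,C_3$; and for $E_8,F_4$ one checks no fundamental weight is small enough, so no wmf module exists.

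Finally I would confirm the list by inspecting weights directly where minusculity does not apply: $\tSym^a\bC^{r+1}$ has the degree-$a$ monomials, each once; the standard $\bC^{2r+1}$ of $B_r$ has weights $\{\pm e_i,0\}$, each once; the seven-dimensional $G_2$--module has the six short roots together with $0$; and the two symplectic exceptions are settled by the rank-two and rank-three weight diagrams already computed. The crux is the propagation step: organizing the Levi restrictions so that the casework is genuinely exhaustive, and in particular pinning down precisely which interior fundamental weights of the classical series fail. The reliable mechanism throughout is to produce an explicit weight of multiplicity at least two by restricting to a well-chosen rank-two Levi and invoking the base-case classification.
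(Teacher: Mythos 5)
First, a point of orientation: the paper does not prove Theorem \ref{T:mf} at all --- it quotes the classification directly from \cite[Theorem 4.6.3]{MR1321638}. So your proposal has to be judged as a self-contained proof of a known result, and in that light your overall strategy --- weight multiplicity-freeness is inherited by \emph{every} irreducible constituent of the restriction to a Levi subalgebra, the constituents must have pairwise disjoint weight supports, reduce to low-rank base cases, then propagate along the Dynkin diagram --- is the standard route, essentially the one followed in the cited reference. Your inheritance lemma is correctly formulated, your base cases for $A_1$, $A_2$, $B_2 \cong C_2$, $G_2$ are right, and the final verification that the listed modules are weight multiplicity-free is routine.

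The genuine gap is the step you yourself flag as ``the crux'': making the Levi casework exhaustive. As written, the rank-two reduction does not do what you need, because in several decisive cases restricting to a rank-two Levi and examining the constituent through the highest weight vector detects nothing. Take $\fg_\bC = \fsl_4\bC$ with $\m = \w_1 + \w_3$ (the adjoint) or $\m = 2\w_2$: on every rank-two Levi the highest-weight constituent is perfectly multiplicity-free ($U_{\w_1}$ of $A_2$, $\bC^2 \ot \bC^2$ over $A_1{\times}A_1$, $\tSym^2\bC^3{}^*$ of $A_2$, respectively), so your stated mechanism for killing nonadjacent support and for killing $a\,\w_k$, $a\ge 2$, at an interior node simply does not fire. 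These highest weights are eliminated only by locating some \emph{other} irreducible constituent of the branching --- e.g.\ the adjoint $A_2$-constituent inside $U_{2\w_2}|_{\fsl_3\bC} \simeq \tSym^2(\bC^3{}^*) \op \fsl_3\bC \op \tSym^2\bC^3$ --- and your sketch never sets up the branching computations (or, alternatively, the rank-three base cases) that this requires. The same incompleteness affects the eliminations that carry the remaining content of the theorem: the interior fundamental weights of types B, C, D beyond the listed exceptions, and all fundamental weights of $E_8$ and $F_4$, are dispatched with ``one checks'' rather than an argument; each is a finite zero-weight-multiplicity computation (e.g.\ the $26$-dimensional $F_4$-module has zero weight of multiplicity two), but until they are organized and written down you have an outline of the classification, not a proof of it.
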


From \eqref{E:Umf} and Theorem \ref{T:mf}(e) we obtain the following.

\begin{corollary} \label{C:E8F4}
The exceptional Lie groups of type $E_8$ and $F_4$ admit no principal Hodge representations.
\end{corollary}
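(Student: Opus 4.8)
The plan is to argue by contradiction, reducing the statement to the non-existence recorded in Theorem \ref{T:mf}(e). Suppose that $G$ is of type $E_8$ or $F_4$, so that $\fg_\bC = \fe_8$ or $\ff_4$, and suppose $(V,\varphi)$ is a principal Hodge representation of $G$. By Definition \ref{D:principal} the Hodge numbers are $\bh = (1,1,\ldots,1)$, so by \eqref{E:Tmf} the $\ttT_\varphi$--eigenvalues of $V_\bC$ are multiplicity-free, and hence by \eqref{E:mf} the module $V_\bC$ is weight multiplicity-free. The goal is to contradict this against part (e) of the classification.

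First I would extract a nontrivial irreducible constituent. Since $(V,Q,\pi,\varphi)$ is a Hodge representation, the differential $\pi_*$ is injective on $\fg_\bC$; therefore $V_\bR$ is not a direct sum of trivial $G_\bR$--modules, and it contains a nontrivial irreducible $G_\bR$--submodule $V^0$, with corresponding nontrivial irreducible $\fg_\bC$--module $U$ (cf. \S\ref{S:RCQ}). Applying the analysis of \S\ref{S:mf} to this submodule, \eqref{E:Umf} gives that $U$ is weight multiplicity-free. But Theorem \ref{T:mf}(e) asserts that $\fe_8$ and $\ff_4$ admit no nontrivial weight multiplicity-free representation. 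This contradicts the existence of $U$, so no principal Hodge representation of $G$ exists.

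The only point requiring any care — and the reason assumption (ii) that $V_\bR$ be irreducible may be dropped here — is the passage to $U$: I do not need $V_\bR$ to be irreducible, only that faithfulness of $\pi_*$ forces at least one nontrivial irreducible constituent, whose weight multiplicity-freeness is then inherited from that of $V_\bC$ (its weights forming a sub-multiset of $\Lambda(V_\bC)$). Beyond this, there is no real obstacle; the entire content is carried by Theorem \ref{T:mf}(e), and the corollary is a formal consequence once the faithful constituent has been isolated.
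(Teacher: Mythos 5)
Your proof is correct and follows essentially the same route as the paper, which derives the corollary directly from \eqref{E:Umf} together with Theorem \ref{T:mf}(e); your extra care in using injectivity of $\pi_*$ to extract a nontrivial irreducible constituent (so that Theorem \ref{T:mf}(e) applies) is a detail the paper leaves implicit, but it is the same argument.
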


\subsection{Weights} \label{S:wts}

It will be helpful to review some well-known properties of the weights $\Lambda(U)$ of the irreducible $\fg_\bC$--module $U$.  Assume $\fg_\bC$ is semi-simple, and let $\m$ denote the highest weight of $U$.  Given any weight $\lambda \in \Lambda(U)$, there exists an (ordered) sequence $\{ \s_{i_1},\ldots,\s_{i_\ell}\}$ of simple roots such that
\begin{i_list_nem}
\item $\lambda = \m - (\s_{i_1}+\cdots+\s_{i_\ell})$, and 
\item $\m - (\s_{i_1}+\cdots+\s_{i_j}) \in \Lambda(U)$, for all $1 \le j \le \ell$.
\end{i_list_nem}
By (i), any weight $\lambda$ of $U$ is of the form $\m - \lambda^i\s_i$, for some $0 \le \lambda^i \in \bZ$.  So, \eqref{E:n>=0} implies $\m(\ttT_\varphi)$ is the largest eigenvalue of $U$:
\begin{equation} \label{E:max}
  \lambda(\ttT_\varphi) \ \le \ \m(\ttT_\varphi) \quad\hbox{for all } \ 
  \lambda \in \Lambda(U) \,.
\end{equation}

If $U^\m \subset U$ is the highest weight line, then the weight space of $\lambda = \m- (\s_{i_1}+\cdots+\s_{i_\ell})$ is $U^\lambda = \fg^{-\s_{i_\ell}}\cdots \fg^{-\s_{i_1}}\cdot U^\m$.  In particular, since $\pi_* : \fg_\bC \to \tEnd(V,Q)$ is injective, for each $1 \le i \le r$, there exists $\lambda \in \Lambda(U)$ such that $\lambda -\s_i \in \Lambda(U)$.  Therefore, both $\lambda(\ttT_\varphi)$ and $\lambda(\ttT_\varphi) - n_i$ are eigenvalues of $U$.  In particular, $\ttT_\varphi$--eigenvalues of $V_\bC$ are multiplicity-free only if $n_i \not=0$ for all $1\le i\le r$.  By \eqref{E:n>=0}, $n_i \ge0$.  Therefore, if the Hodge representation $(V,\varphi)$ is principal, it is necessarily the case that 
\begin{equation} \label{E:n>0}
  n_i \ > \ 0 \,,
\end{equation}
for all $1 \le i\le r$.  

\begin{remark} \label{R:borel}
It follows from this discussion and \eqref{E:p} that the stabilizer $\fp = \fg_{\ge0}$ of the Hodge filtration $F^\sbullet$ in $\fg_\bC$ is the Borel $\fb$.  In particular, the Hodge domain is $G_\phi(\bR)/T$, with $T$ a compact, maximal torus.
\end{remark}

Let $\m^*$ denote the highest weight of $U^*$.  Swapping $U$ and $U^*$ if necessary, we may assume that 
\begin{equation} \label{E:m*<m}
  \m(\ttT_\varphi) \ \le \m^*(\ttT_\varphi) \,.
\end{equation}
Then \eqref{E:ev} and \eqref{E:max} yield
\begin{equation} \label{E:mmu}
  m \ = \ \m(\ttT_\varphi) \,.
\end{equation}
Define
\begin{equation} \label{E:m*}
  m^* \ \dfn \ \m^*(\ttT_\varphi) \ \le \ m \,.
\end{equation}
By \eqref{E:ev}, any two $\ttT_\varphi$--eigenvalues of $V_\bC$ differ by an integer; therefore
$$
  0 \le m - m^* \ \in \ \bZ \,.
$$
If $U$ is complex, so that $V_\bC = U \op U^*$ then \eqref{E:Tmf} and \eqref{E:m*<m} force
$$
  m^* \ < \ m \,.
$$

The notation 
$$
  (\lambda^1\cdots\lambda^r) \ \dfn \ \lambda = \m - \lambda^i\s_i
$$
will be convenient.  For example, $\m$ is denoted $(0\cdots0)$.  Set
$$
  |\lambda| \ = \ \sum \lambda^i \,.
$$
The weights $\{ \lambda \in \Lambda(U) \ : \ |\lambda|=1\}$ may be described very easily: let $\m = \m^i \w_i$; then, $\m - \s_i$ is a weight of $U$ if and only if $\m^i \not=0$, cf. \cite[Proposition 3.2.5]{MR2532439}.  If $\fg_\bC$ is simple, and $U$ is weight multiplicity-free, then $\m = p \w_i$, by Theorem \ref{T:mf}.  Therefore, the two largest $\ttT_\varphi$--eigenvalues of $U$ are $m = \m(\ttT_\varphi)$ and $m-n_i = (\m-\s_i)(\ttT_\varphi)$.  The following lemma is a consequence of this discussion and equations \eqref{E:Tmf} and \eqref{E:ev}.

\begin{lemma} \label{L:iv}
Assume that $\fg_\bC$ is simple.  Suppose that $(V,\varphi)$ is a principal Hodge representation of $G$, and that $V_\bR$ is an irreducible $G_\bR$--submodule.  Let $U$ be the associated irreducible, weight multiplicity-free $\fg_\bC$--module (cf. \S\ref{S:RCQ}) of highest weight $\m = p \w_i$, and satisfying \eqref{E:m*<m}.
\begin{a_list}
\item
If $U$ is real ($V_\bC = U$), then $n_i=1$.
\item
Suppose $U$ is complex ($V_\bC = U \op U^*$ and $U \not= U^*$).  Then $0 < m - m^* \in \bZ$.  Define $1 \le i^* \le r$ by $\m^* = p\w_{i^*}$.  Then $m^* = m-1$ if and only if $n_i > 1$; and $m^* = m-1$ and $n_{i^*}=1$ if and only if $n_i > 2$.
\end{a_list}
\end{lemma}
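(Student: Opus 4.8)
The plan is to read off the structure of the $\ttT_\varphi$--eigenvalues of $V_\bC$, which by \eqref{E:ev} form the multiplicity-free, consecutive set $\{m, m-1, \ldots, -m\}$, and to compare it against the eigenvalues contributed separately by $U$ and by $U^*$. The key input is the fact recorded just above the lemma: because $\m = p\,\w_i$, the two largest eigenvalues of $U$ are $m = \m(\ttT_\varphi)$ and $m - n_i = (\m-\s_i)(\ttT_\varphi)$. Since $\m^* = p\,\w_{i^*}$ is again a multiple of a single fundamental weight and $U^*$ is also weight multiplicity-free, the same fact applied to $U^*$ shows that its two largest eigenvalues are $m^*$ and $m^* - n_{i^*}$; here $\m^*(\ttT_\varphi) = m^*$ is the largest eigenvalue of $U^*$ by \eqref{E:max}. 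For part (a) this finishes immediately: when $U$ is real, $V_\bC = U$, so the eigenvalues of $U$ are exactly $\{m,m-1,\ldots,-m\}$ and the second-largest is $m-1$; matching against $m - n_i$ gives $n_i = 1$.

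For part (b) the inequality $0 < m - m^* \in \bZ$ is already established in the text preceding the lemma. I would then let $S$ denote the set of $\ttT_\varphi$--eigenvalues of $U$; since $\Lambda(U^*) = -\Lambda(U)$, the eigenvalues of $U^*$ form $-S$, and because the eigenvalues of $V_\bC = U \oplus U^*$ are multiplicity-free and exhaust $\{m,m-1,\ldots,-m\}$, the sets $S$ and $-S$ are disjoint with $S \sqcup (-S) = \{m,m-1,\ldots,-m\}$. The top value $m$ lies in $S$ and, since $m^* < m$, not in $-S$. Now I examine $m-1$: it lies in $S$ exactly when it is the second-largest eigenvalue of $U$, i.e. exactly when $n_i = 1$. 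Hence if $n_i > 1$ then $m-1 \notin S$, so $m-1 \in -S$; as $m - m^* \ge 1$ gives $m^* \le m-1$, while $m-1 \in -S$ and $m^* = \max(-S)$ give $m-1 \le m^*$, we conclude $m^* = m-1$. Conversely $m^* = m-1$ puts $m-1 \in -S$, hence $m-1 \notin S$, hence $n_i \neq 1$, i.e. $n_i > 1$. This is the first equivalence.

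For the refinement I assume $m^* = m-1$ (equivalently $n_i \ge 2$) and examine $m-2$. It lies in $S$ exactly when it equals the second-largest eigenvalue $m - n_i$ of $U$, i.e. exactly when $n_i = 2$. If instead $n_i > 2$, then $m-2 \notin S$, so $m-2 \in -S$; since $m-1 = m^* = \max(-S)$, the value $m-2$ is then the second-largest element of $-S$, which equals $m^* - n_{i^*} = (m-1) - n_{i^*}$, forcing $n_{i^*} = 1$. Conversely, if $n_i = 2$ then $m-2 \in S$, so $m-2 \notin -S$, which forces the second-largest element $(m-1) - n_{i^*}$ of $-S$ to be at most $m-3$, i.e. $n_{i^*} \ge 2$. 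Thus, under $m^* = m-1$, one has $n_{i^*} = 1$ iff $n_i > 2$; combined with the first equivalence this gives the stated claim.

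The main obstacle is purely the bookkeeping of part (b): one must argue correctly that the eigenvalues of $U$ "missing" between $m$ and $m-n_i$ are supplied by $U^*$, and in particular that $m-1$ (resp. $m-2$) is forced to be the largest (resp. second-largest) eigenvalue of $U^*$. The two facts that make this rigorous are the identification of the top two eigenvalues of a weight multiplicity-free module whose highest weight is a multiple of a single fundamental weight, and the exact consecutive, multiplicity-free description \eqref{E:ev} of the eigenvalue set of $V_\bC$; everything else is a short comparison of finite sets.
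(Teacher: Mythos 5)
Your proposal is correct, and it is essentially the argument the paper intends: the lemma is stated there as ``a consequence of this discussion and equations \eqref{E:Tmf} and \eqref{E:ev},'' where the discussion in \S\ref{S:wts} supplies exactly your key input, namely that the top two $\ttT_\varphi$--eigenvalues of a weight multiplicity-free module with highest weight $p\,\w_i$ are $m$ and $m-n_i$ (and, applied to $U^*$, are $m^*$ and $m^*-n_{i^*}$). Your write-up simply makes explicit the set-theoretic bookkeeping ($S \sqcup (-S) = \{m,m-1,\ldots,-m\}$, disjointness from multiplicity-freeness, consecutiveness from \eqref{E:ev}) that the paper leaves to the reader.
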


\section{Symplectic Hodge groups}

\begin{theorem} \label{T:C}
Let $G$ be a Hodge group with complex Lie algebra $\fg_\bC = \fsp_{2r}\bC$.  Assume that $(V,\varphi)$ is a Hodge representation with the property that $V_\bR$ is an irreducible $G_\bR$--module.  Let $\ttT_\varphi$ be the associated grading element \emph{(\S\ref{S:GE})}, and assume the normalization \eqref{E:n>=0} holds.  Then the Hodge representation is principal if and only if one of the following holds:
\begin{a_list}
\item $V_\bC = \bC^{2r}$ is the standard representation, and $\ttT_\varphi = \ttT^1 + \cdots + \ttT^r$; 
\item $r=2$, $V_\bC \subset \tw^2\bC^4$ is the irreducible $G_\bC$--module of highest weight $\w_2$, and $\ttT = \ttT^1 + \ttT^2$; 
\item $r=3$, $V_\bC \subset \tw^3\bC^6$ is the irreducible $G_\bC$--module of highest weight $\w_3$, and $\ttT_\varphi = 3 \ttT^1 + \ttT^2 + \ttT^1$.
\end{a_list}
\end{theorem}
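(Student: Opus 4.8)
The plan is to classify the principal Hodge representations of $\fsp_{2r}\bC$ by combining the weight multiplicity-free constraint of Theorem~\ref{T:mf}(c) with the eigenvalue condition \eqref{E:ev}. Since $V_\bR$ is irreducible and $\fg_\bC = \fsp_{2r}\bC$, the associated module $U$ is self-dual (all $\fsp_{2r}\bC$--modules are), so $V_\bC = U$ is \emph{real} and Lemma~\ref{L:iv}(a) applies. By \eqref{E:Umf} the module $U$ must be weight multiplicity-free, so Theorem~\ref{T:mf}(c) restricts us immediately to two cases: either $U = \bC^{2r}$ is the standard representation with $\m = \w_1$, or $r \in \{2,3\}$ and $U \subset \tw^r\bC^{2r}$ with $\m = \w_r$. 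This reduces the entire problem to checking, in each of these finitely many families, whether one can choose the grading element $\ttT_\varphi = n_i\ttT^i$ (with $n_i > 0$ by \eqref{E:n>0}, forced by principality) so that the eigenvalue set matches \eqref{E:ev}.

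For each candidate I would first record, in the $(\lambda^1\cdots\lambda^r)$ notation, the full list of weights of $U$ and their $\ttT_\varphi$--eigenvalues as linear functions of the $n_i$. For the standard representation, the weights are $\w_1, \w_1 - \s_1, \ldots$ running down a single string, and since $\m = p\w_i$ with $p = 1$, $i = 1$, Lemma~\ref{L:iv}(a) gives $n_1 = 1$; the task is then to show that the remaining $n_2,\ldots,n_r$ are forced to equal $1$ as well, by demanding that consecutive eigenvalues differ by exactly $1$ (so that the $2r$ eigenvalues form the arithmetic progression $\{m, m-1, \ldots, -m\}$ required by \eqref{E:ev}), where $m = r - \tfrac12$ by \eqref{E:m_dim}. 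This yields $\ttT_\varphi = \ttT^1 + \cdots + \ttT^r$, giving case~(a). For the exceptional modules $\w_2$ in type $C_2$ and $\w_3$ in type $C_3$, I would write out the weight diagram explicitly (these are $5$--dimensional and $14$--dimensional before restriction, but the irreducible pieces are $\tw^2_0\bC^4$ of dimension $5$ and $\tw^3_0\bC^6$ of dimension $14$), compute $\dim_\bC V_\bC$, deduce $m$ from \eqref{E:m_dim}, and solve the resulting linear system on $(n_1,\ldots,n_r)$ imposed by \eqref{E:ev}.

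The main obstacle I anticipate is the bookkeeping in the rank~$2$ and rank~$3$ fundamental-weight cases: verifying that the eigenvalue multiset is exactly multiplicity-free and fills out a complete unbroken integer-spaced string requires tracking each weight's coefficient vector and confirming no two weights collide in eigenvalue and no gap appears. This is where the statement's specific values $\ttT = \ttT^1 + \ttT^2$ and $\ttT_\varphi = 3\ttT^1 + \ttT^2 + \ttT^1$ must be reverse-engineered; I note that the stated coefficient for case~(c) appears as $3\ttT^1 + \ttT^2 + \ttT^1$, which I would interpret as $3\ttT^1 + \ttT^2 + \ttT^3$ (a likely typo), and confirming the correct coefficients is precisely the delicate part. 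The converse direction—that each listed $\ttT_\varphi$ does produce Hodge numbers $(1,\ldots,1)$—is then a direct verification: substitute the grading element, list the eigenvalues, and check against \eqref{E:ev}.

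The cleanest organization is therefore: first invoke self-duality and Theorem~\ref{T:mf}(c) to cut down to three weight-multiplicity-free candidates; second, apply Lemma~\ref{L:iv}(a) to fix the normalization in the leading coordinate; third, impose \eqref{E:ev} as a system of linear constraints forcing consecutive eigenvalue gaps to be $1$, solving for the remaining $n_i$ in each case; and fourth, verify the converse by direct substitution. The one genuinely computational step is the weight enumeration for $\w_r$ in types $C_2$ and $C_3$, but because $U$ is weight multiplicity-free and low-dimensional, this remains entirely tractable.
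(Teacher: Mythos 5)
Your overall strategy matches the paper's proof: self-duality of $\fsp_{2r}\bC$--modules plus the principality constraint reduces to $V_\bC = U$; \eqref{E:Umf} and Theorem~\ref{T:mf}(c) cut the candidates down to the standard representation and the fundamental representations $\w_2$ ($r=2$) and $\w_3$ ($r=3$); and imposing \eqref{E:ev} together with \eqref{E:m_dim} and \eqref{E:mmu} pins down the grading elements exactly as you describe (your reading of case~(c) as $3\ttT^1+\ttT^2+\ttT^3$, correcting the typo, agrees with the paper's computation).

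There is, however, one genuine gap. You assert that since every $\fsp_{2r}\bC$--module is self-dual, ``$V_\bC = U$ is real.'' Self-duality only gives real \emph{or} quaternionic (\S\ref{S:RCQ}); it is \eqref{E:RC} --- i.e., principality --- that excludes the quaternionic case, and then only in the forward direction. More importantly, your converse direction consists solely of substituting each listed grading element and checking the eigenvalue list of $U$ against \eqref{E:ev}. That is not sufficient: whether $U$ is real or quaternionic as a $G_\bR$--module depends on $\ttT_\varphi$ itself, through \eqref{E:Tcpt} and \eqref{E:RQtest}. If $\m(\ttT^\tcpt)$ were odd, then $U$ would be quaternionic, so $V_\bC = U \op U^* \simeq U \op U$, every $\ttT_\varphi$--eigenvalue would occur with multiplicity two, and the Hodge numbers would be $(2,2,\ldots,2)$ rather than $(1,\ldots,1)$ --- even though the eigenvalue check on $U$ alone would pass. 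The paper closes the argument by computing $\ttT^\tcpt = 0$ for each of the three candidate grading elements, so that $\m(\ttT^\tcpt)=0$ is even and $U$ is indeed real by \eqref{E:RQtest}. This check is not a formality: in the proof of Theorem~\ref{T:D} the standard representation of $\fso_{2r}\bC$ is eliminated precisely because its candidate grading elements force $\w_1(\ttT^\tcpt)=1$ to be odd, i.e., $U$ quaternionic. You need to add the same verification here to complete the ``if'' direction.
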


\begin{proof}
The representations of $G_\bC$ are self-dual.  So the representation $U$ associated to $V_\bR$ (cf. \S\ref{S:RCQ}) is either real or quaternionic.  By \eqref{E:RC}, if $(V,\varphi)$ is principal, then $U$ is necessarily real, so that $V_\bC = U$.  The condition \eqref{E:Umf} and Theorem \ref{T:mf}(c) restrict our attention to the case that $U$ is one of the the following fundamental representations:  either $U_{\w_1} = \bC^{2r}$ of highest weight
$$
  \w_1 \ = \ \left(\s_1 + \cdots + \s_{r-1} + \half \s_r \right) \,,
$$
for arbitrary $r$; or $U_{\w_r} = \tw^r \bC^{2r}$ of highest weight 
\begin{eqnarray*}
  \w_2 \ = \ \s_1 + \s_2 & & \hbox{if } r=2 \,,\\
  \w_3 \ = \ \s_1 + 2 \s_2 + \tfrac{3}{2} \s_3 & & \hbox{if } r=3 \, .
\end{eqnarray*}

\medskip

\noindent{\small{\bf (A)}}  Let's begin with the case that $\m = \w_1$.  The weights of $\bC^{2r}$ are 
\begin{eqnarray*}
  \Lambda(\bC^{2r}) & = & \{ \w_1 \} \,\cup\,
  \{ \w_1-(\s_1+\cdots+\s_i) \ | \ 1 \le i \le r \}\\
  & & \ \cup \
  \{ \w - (\s_1+\cdots+\s_{i-1}) - 2(\s_i+\cdots+\s_{r-1}) - \s_r \ | 
  \ 1 \le i \le r-1 \} \,.
\end{eqnarray*}
It is straightforward to check that $\ttT_\varphi = \ttT^1 + \cdots + \ttT^r$ is the unique grading element satisfying the normalization \eqref{E:n>=0}, and yielding the consecutive, multiplicity-free eigenvalues \eqref{E:ev}.  

\medskip

\noindent{\small{\bf (B)}}  Next, consider the case that $r=2$ and $\m = \w_2$.  Here $\tdim_\bC U = 5$ and the weights of $U$ are
\begin{eqnarray*}
  \Lambda(U) & = & \{ \pm(\s_1+\s_2) \,,\, \pm\s_1 \,,\, 0 \} \\
  & = & \{ \w_2\,,\, \w_2 - \s_2 \,,\, \w_2-(\s_1+\s_2) \,,\, 
           \w_2-(2\s_1+\s_2) \,,\, \w_2-2(\s_1+\s_2) \} \,.
\end{eqnarray*}
It is clear that $\ttT = \ttT^1 + \ttT^2$ is the unique element satisfying the normalization \eqref{E:n>=0}, and yielding the consecutive, multiplicity-free eigenvalues \eqref{E:ev}.  

\medskip

\noindent{\small{\bf (C)}}  Finally, suppose that $r=3$ and $\m = \w_3$.  Then $\tdim_\bC U = 14$, and the weights of $U$ are
\begin{eqnarray*}
  \Lambda(U) & = & 
  \{ (000) \,,\, (001) \,,\, (011) \,,\, (021) \,,\, (022) \,,\, 
  (111) \,,\, (121) \,,\\
   & & \hsp{7pt} (122) \,,\, (132) \,,\, (221) \,,\, (222) \,,\, (232) \,,\, 
   (242) \,,\, (243) \} \,.
\end{eqnarray*}
Above, we utilize the notation $(\lambda^1\lambda^2\lambda^3) = \w_3 - (\lambda^1\s_1+\lambda^2\s_2+\lambda^3\s_3)$ introduced in \S\ref{S:wts}.  In particular, the weights of $U$ include $\{\w_3 \,,\, \w_3 - \s_3 \,,\, \w_3-(\s_2+\s_3) \}$.  Since the remaining weights are all of the form $\w_3 - (a\s_1+b\s_2+c\s_3)$, with $0\le a,b,c\in\bZ$ and $a+b+c \ge 3$, this forces $1=n_3=n_2$.  

The conditions \eqref{E:m_dim} and \eqref{E:mmu} yield $13/2 = \m(\ttT_\varphi) = \m(n_1 \ttT^1 + \ttT^2+\ttT^3)$, so that $n_1 = 3$.  Thus, the grading element is necessarily of the form given in (c).  Given $\ttT_\varphi = 3\ttT^1 + \ttT^2 + \ttT^3$, is straight-forward to compute $\Lambda(U)(\ttT_\varphi) = \{ 13/2 , 11/2 , \ldots , -11/2 , -13/2 \}$.  Thus \eqref{E:ev} holds.

\medskip

Finally, observe that in each of the cases {\small{(A)}}, {\small{(B)}} and {\small{(C)}} above, \eqref{E:Tcpt} yields $\ttT^\tcpt=0$, so that $\m(\ttT^\tcpt)=0$ and $V_\bC = U$ is real by \eqref{E:RQtest}, as required.
\end{proof}

\section{Orthogonal Hodge groups}

\begin{theorem} \label{T:B}
Let $G$ be a $\bQ$--algebraic group with complex Lie algebra $\fg_\bC = \fso_{2r+1}\bC$.  Assume that $(V,\varphi)$ is a Hodge representation of $G$ with the property that $V_\bR$ is an irreducible $G_\bR$--module.  Let $\ttT_\varphi$ be the associated grading element \emph{(\S\ref{S:GE})}, and assume the normalization \eqref{E:n>=0} holds.  Then the Hodge representation is principal if and only if one of the following holds:
\begin{a_list}
\item $V_\bC = \bC^{2r+1}$ is the standard representation, and $\ttT_\varphi = \ttT^1 + \cdots + \ttT^r$;
\item $V_\bC = U_{\w_r}$ is the spin representation, 
\begin{equation} \label{E:B}
    \ttT_\varphi \ = \ 2^{r-2}\,\ttT^1 \, + \, 2^{r-3}\,\ttT^2 \, + \, 
    2^{r-4}\,\ttT^3 \,+\cdots \, + 2\,\ttT^{r-2}  \, + \, \ttT^{r-1} 
    \,+\, \ttT^r \,,
\end{equation}
and $(r-2)(r-1) \in 4 \bZ$.
\end{a_list}
\end{theorem}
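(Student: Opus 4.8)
The plan is to follow the template of the proof of Theorem~\ref{T:C}. Every irreducible $\fso_{2r+1}\bC$--module is self-dual, so the module $U$ associated to $V_\bR$ (cf. \S\ref{S:RCQ}) is either real or quaternionic; by \eqref{E:RC} a principal $(V,\varphi)$ forces $U$ real, whence $V_\bC = U$. The condition \eqref{E:Umf} together with Theorem~\ref{T:mf}(b) then leaves exactly two candidates: the standard module $\bC^{2r+1}$ ($\m = \w_1$) and the spin module $U_{\w_r}$ ($\m = \w_r$). Writing the simple roots in the usual coordinates $\s_i = \e_i - \e_{i+1}$ for $i<r$ and $\s_r = \e_r$, and abbreviating $t_i \dfn \e_i(\ttT_\varphi)$, principality gives \eqref{E:n>0}, i.e. $n_i>0$, so that $t_1 > t_2 > \cdots > t_r > 0$; moreover the $t_i$ are integers, since $t_r = n_r$ and $t_i = t_{i+1} + n_i$.

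For the standard module the weights are $\{0\}\cup\{\pm\e_i\}$, so the $\ttT_\varphi$--eigenvalues are $\{0\}\cup\{\pm t_i\}$; principality (multiplicity--free, consecutive, of cardinality $2r+1$) forces $\{t_1,\dots,t_r\}=\{1,\dots,r\}$, hence $t_i = r-i+1$ and $\ttT_\varphi = \ttT^1+\cdots+\ttT^r$, exactly as in case~(A) of the proof of Theorem~\ref{T:C}. This is (a).

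The spin case is the heart of the argument. Its $2^r$ weights are $\half(\pm\e_1\pm\cdots\pm\e_r)$; recording a weight by the set $A\subseteq\{1,\dots,r\}$ of indices carrying a minus sign, its $\ttT_\varphi$--eigenvalue is $\half\big(\sum_i t_i - 2\sum_{i\in A}t_i\big)$. By \eqref{E:m_dim} one has $m = \half(2^r-1)$, so $(V,\varphi)$ is principal exactly when these eigenvalues are the $2^r$ consecutive values $\{m,m-1,\dots,-m\}$; equivalently $\sum_i t_i = 2^r-1$ and the subset sums $\{\sum_{i\in A}t_i : A\subseteq\{1,\dots,r\}\}$ are precisely $\{0,1,\dots,2^r-1\}$, each attained once. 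A set of $r$ positive integers has $2^r$ subsets, and a short induction shows their subset sums fill $\{0,1,\dots,2^r-1\}$ bijectively only for $\{1,2,4,\dots,2^{r-1}\}$: the minimal positive sum forces $t_r=1$, representing $2$ forces $t_{r-1}=2$, and once $\{t_r,\dots,t_{r-k+1}\}=\{1,\dots,2^{k-1}\}$ the value $2^k$ is either unrepresentable (if $t_{r-k}>2^k$) or duplicates an existing sum (if $t_{r-k}<2^k$) unless $t_{r-k}=2^k$. Hence $t_i = 2^{r-i}$, so $n_i = t_i - t_{i+1} = 2^{r-1-i}$ for $i<r$ and $n_r = 1$, which is the grading element \eqref{E:B}; conversely this $\ttT_\varphi$ manifestly realizes \eqref{E:ev}, so it is principal.

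It remains to decide when $V_\bC = U_{\w_r}$ is genuinely real, so that $V_\bR$ is irreducible. Here $n_i = 2^{r-1-i}$ is even for $1\le i\le r-2$ while $n_{r-1}=n_r=1$ are odd, so \eqref{E:Tcpt} gives $\ttT^\tcpt = 2(\ttT^1+\cdots+\ttT^{r-2})$. Expanding $\w_r = \half\sum_{j<r} j\,\s_j + \half r\,\s_r$ yields $\w_r(\ttT^i)=i/2$ for $i<r$, whence $\m(\ttT^\tcpt) = 2\sum_{i=1}^{r-2} i/2 = \tfrac12(r-2)(r-1)$. By \eqref{E:RQtest}, $U$ is real precisely when this integer is even, i.e. when $(r-2)(r-1)\in 4\bZ$; when it is odd $U$ is quaternionic, $V_\bC = U\op U^*$ acquires doubled weights, and \eqref{E:mf} fails. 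This establishes (b). I expect the reality bookkeeping of this last step to be the main obstacle: unlike the symplectic case, where $\ttT^\tcpt=0$ throughout, here the parities of the $n_i$ must be tracked carefully, and the constraint $(r-2)(r-1)\in 4\bZ$ emerges only after correctly evaluating $\m(\ttT^\tcpt)$ against the real form determined by $\varphi$ (which is not the compact form, so the familiar Frobenius--Schur indicator of the spin module does not apply directly).
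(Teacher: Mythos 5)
Your proposal is correct, and its identification of the two candidate modules, the treatment of the standard representation, and the final reality computation all match the paper; but the heart of the spin case is argued by a genuinely different route. The paper stays in simple-root coordinates: it parameterizes the spin weights by monotone integer sequences $\cL(U)$, filters them by the sets $\cF^s$ of \eqref{E:spin_filt}, and runs an induction on the strata $\cL^s = \cF^s\backslash\cF^{s-1}$, using explicit stratum-to-stratum bijections (adding $\s_{r-\sfs-1}$, resp.\ subtracting $\s_{r-\sfs-1}+\cdots+\s_r$) to prove $n_{r-t}=2^{t-1}$ and that each stratum contributes a consecutive block of eigenvalues. You instead pass to orthogonal coordinates, where the spin weights are $\half(\pm\e_1\pm\cdots\pm\e_r)$, and reduce principality to the classical combinatorial fact that the $2^r$ subset sums of positive integers $t_1>\cdots>t_r$ exhaust $\{0,1,\ldots,2^r-1\}$ bijectively exactly when the $t_i$ are the powers $2^{r-1},\ldots,2,1$; translating back via $n_i=t_i-t_{i+1}$ and $n_r=t_r$ yields \eqref{E:B}. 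Your route is shorter, makes the appearance of powers of two transparent (binary expansion), and gives necessity and sufficiency of the grading element in one stroke; the paper's heavier stratified induction has the advantage that its machinery transfers almost verbatim to the type D spin representations of Theorem~\ref{T:D}, where only even numbers of sign changes occur and your coordinate picture would need modification. Two small points: the paper also records the (trivial) reality check $\ttT^\tcpt=0$ for the standard representation, which you omit, and your closing concern about Frobenius--Schur indicators is moot --- the criterion \eqref{E:RQtest}, which you in fact apply correctly, is precisely the tool the paper uses in place of any compact-form indicator argument.
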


\begin{theorem} \label{T:D}
Let $G$ be a Hodge group with complex Lie algebra $\fg_\bC = \fso_{2r}\bC$.  Assume that $(V,\varphi)$ is a Hodge representation with the property that $V_\bR$ is an irreducible $G_\bR$--module.  Let $\ttT_\varphi$ be the associated grading element \emph{(\S\ref{S:GE})}, and assume the normalization \eqref{E:n>=0} holds.  Then the Hodge representation is principal if and only if one of the following holds:
\begin{a_list}
\item $r\ge4$ is even, $V_\bC = U$ is a spin representation ($\m = \w_{r-1},\w_r$), 
\begin{equation}\label{E:Da}
  \ttT_\varphi \ = \ 2^{r-3}\,\ttT^1 \, + \, 2^{r-4}\,\ttT^2 \, + \, 
  2^{r-5}\,\ttT^3 \,+\cdots\, + 2\,\ttT^{r-3}  \, + \, \ttT^{r-2} \,+\, 
  \ttT^{r-1} \,+\, \ttT^r \,,
\end{equation}
and $(r-3)(r-2) \in 4 \bZ$;
\item $r \ge 5$ is odd, $V_\bC = U \op U^*$, where $U \not\simeq U^*$ is a spin representation ($\m = \w_{r-1},\w_r$) and $\ttT_\varphi$ is one of 
\begin{equation} \label{E:Db}
\begin{array}{rcl}
  \ttT_\varphi & = &  2^{r-2}\,\ttT^1 \, + \, 2^{r-3}\,\ttT^2 \,+\cdots\, 
  + 4\,\ttT^{r-3}  \, + \, 2\,\ttT^{r-2} \,+\, \ttT^{r-1} \,+\, 3\,\ttT^r \,,\\
  \ttT_\varphi & = &  2^{r-2}\,\ttT^1 \, + \, 2^{r-3}\,\ttT^2 \,+\cdots\, 
  + 4\,\ttT^{r-3}  \, + \, 2\,\ttT^{r-2} \,+\, 3\,\ttT^{r-1} \,+\, \ttT^r \,.
\end{array}
\end{equation}
\end{a_list}
\end{theorem}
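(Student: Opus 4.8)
The plan is to follow the template used for Theorems~\ref{T:C} and~\ref{T:B}. By \eqref{E:Umf} and Theorem~\ref{T:mf}(d), the irreducible $\fg_\bC$--module $U$ attached to $V_\bR$ (cf. \S\ref{S:RCQ}) is either the standard representation $\bC^{2r}$ ($\m = \w_1$) or a half--spin representation ($\m = \w_{r-1}$ or $\w_r$). Throughout I use the standard realization in which the simple roots are $\s_i = e_i - e_{i+1}$ for $i<r$ and $\s_r = e_{r-1}+e_r$, and I abbreviate $a_i := e_i(\ttT_\varphi)$, so that $a_i - a_{i+1} = n_i$ for $i<r$ and $a_{r-1}+a_r = n_r$. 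First I would dispose of the standard representation: since $\bC^{2r}$ carries an invariant nondegenerate symmetric form it is self--dual, hence $V_\bC = U$ and $\tdim_\bC V_\bC = 2r$. Its weights are $\pm e_1,\dots,\pm e_r$, so principality forces $\{|a_1|,\dots,|a_r|\} = \{\half,\tfrac32,\dots,\tfrac{2r-1}{2}\}$; together with \eqref{E:n>0} this pins the grading element down (up to the fork at $\s_{r-1},\s_r$), and a short computation using \eqref{E:Tcpt} and \eqref{E:RQtest} gives $\m(\ttT^\tcpt)$ odd in every case, so $U$ is quaternionic, contradicting \eqref{E:RC}. Hence the standard representation yields no principal Hodge representation.

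The heart of the matter is the spin case. Here the weights are $\half(\pm e_1 \pm\cdots\pm e_r)$, with an even number of minus signs for $\m = \w_r$ and an odd number for $\m = \w_{r-1}$, and the $\ttT_\varphi$--eigenvalue of such a weight is $\half(\pm a_1 \pm\cdots\pm a_r)$. I would split on the parity of $r$, which controls self--duality: for $r$ even both half--spin modules are self--dual, so $V_\bC = U$ (case~(a)); for $r$ odd they are dual to one another, so $U$ is complex and $V_\bC = U \op U^*$ is the full spin module (case~(b)). The combinatorial crux is the elementary claim that a family of signed sums $\{\pm a_1 \pm\cdots\pm a_k\}$ is multiplicity--free and consists of consecutive (evenly spaced) values if and only if $\{|a_1|,\dots,|a_k|\}$ is the binary ruler $\{1,2,4,\dots,2^{k-1}\}$. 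For $r$ odd this is applied to all $2^r$ sign patterns and forces $\{|a_1|,\dots,|a_r|\} = \{1,2,\dots,2^{r-1}\}$; for $r$ even one restricts to the even--sign patterns, where the dependence on the sign of $e_r$ drops out precisely when $a_r = 0$, forcing $a_r = 0$ and $\{a_1,\dots,a_{r-1}\} = \{1,2,\dots,2^{r-2}\}$. Translating through $a_i - a_{i+1} = n_i$ and $a_{r-1}+a_r = n_r$, the even case yields exactly \eqref{E:Da}, while the odd case yields \eqref{E:Db}, the two options there corresponding to the two resolutions $a_r = \pm1$ of the $\s_{r-1},\s_r$ fork (equivalently, to the two choices of which half--spin module is $U$). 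Lemma~\ref{L:iv} and the identities \eqref{E:m_dim}, \eqref{E:mmu} supply the bookkeeping that forces the $n_i$ coordinate by coordinate up the Dynkin diagram from $\s_r$.

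It remains to verify the converse and settle the real/complex dichotomy. That each candidate grading element is genuinely principal follows by reading \eqref{E:ev} off the binary structure of the $a_i$. For $r$ odd, $U$ is complex and \eqref{E:cpx} holds automatically, since the even-- and odd--sign spin weights are disjoint; thus no arithmetic condition arises, matching the absence of one in~(b). For $r$ even, $U$ is self--dual and must be shown to be real rather than quaternionic. From \eqref{E:Tcpt} the even entries of $(n_1,\dots,n_r)$ are precisely $n_1,\dots,n_{r-3}$, so $\ttT^\tcpt = 2(\ttT^1 + \cdots + \ttT^{r-3})$; expanding $\w_r$ (equivalently $\w_{r-1}$) in the simple--root basis, where the coefficient of $\s_i$ is $i/2$ for $i \le r-2$, gives $\m(\ttT^\tcpt) = \tfrac12(r-3)(r-2)$. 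By \eqref{E:RQtest}, $U$ is real exactly when this is even, i.e. when $(r-3)(r-2) \in 4\bZ$, which is the condition recorded in~(a).

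I expect the main obstacle to be the forward (forcing) half of the spin case: establishing the binary--ruler characterization rigorously and, in particular, proving that for the even--sign half--spin weights consecutiveness and multiplicity--freeness \emph{force} $a_r = 0$ rather than merely admit it, while handling the $\s_{r-1},\s_r$ fork so that the two grading elements of \eqref{E:Db} (and no others) emerge. Once $\ttT^\tcpt$ is identified the real--versus--quaternionic parity computation is routine, but it must be carried out carefully to land the stated divisibility $(r-3)(r-2)\in 4\bZ$.
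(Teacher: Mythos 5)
Your coordinate reformulation (signed sums of the $a_i = e_i(\ttT_\varphi)$) is a reasonable alternative to the paper's bookkeeping in simple-root coefficients, and several pieces are sound: the binary-ruler lemma for the \emph{full} set of $2^r$ sign patterns is genuinely elementary (distinct, consecutive signed sums force the subset sums of the $|a_i|$ to enumerate $0,1,\dots,2^r-1$, whence $\{|a_i|\}=\{1,2,4,\dots,2^{r-1}\}$), so the odd case (b) goes through; the disposal of the standard representation via \eqref{E:RQtest} agrees with the paper; and your computation $\ttT^\tcpt = 2(\ttT^1+\cdots+\ttT^{r-3})$, $\m(\ttT^\tcpt)=\half(r-3)(r-2)$ is exactly the paper's. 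The genuine gap is the one you flagged yourself: for $r$ even you never prove that multiplicity-freeness and consecutiveness of the even-sign sums \emph{force} $a_r=0$. That forcing statement is the entire content of the ``only if'' direction in case (a), so as written you have the converse direction and the odd case, but not the even case.

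Moreover, the claim you defer is false as stated, so no local patch can prove it: it must be formulated with $r\ge6$. Take $r=4$ and $\ttT_\varphi = \ttT^1+\ttT^2+2\,\ttT^3+\ttT^4$, i.e. $(a_1,a_2,a_3,a_4)=\left(\tfrac{7}{2},\tfrac{5}{2},\tfrac{3}{2},-\tfrac{1}{2}\right)$. The eight even-sign sums are exactly $\pm1,\pm3,\pm5,\pm7$, so the half-spin representation $U_{\w_4}$ of $\fso_8\bC$ has multiplicity-free, consecutive $\ttT_\varphi$--eigenvalues $\left\{\pm\half,\pm\tfrac{3}{2},\pm\tfrac{5}{2},\pm\tfrac{7}{2}\right\}$, even though $a_4\ne0$ and $\ttT_\varphi$ is not of the form \eqref{E:Da}. (This is the triality image of the standard-representation solution; Theorem \ref{T:D} itself survives because $\ttT^\tcpt=2\,\ttT^3$ gives $\w_4(\ttT^\tcpt)=1$ odd, so $U$ is quaternionic by \eqref{E:RQtest} and \eqref{E:RC} fails --- which is why the paper eliminates $r=4$ separately, before any eigenvalue analysis.) Any correct proof of your forcing claim must therefore use the length of the diagram in an essential way. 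This is exactly what the paper's induction does: having forced $n_r=n_{r-2}=1$, it faces the two branches $(n_{r-1},n_{r-3})=(1,2)$ and $(2,1)$, and excludes the second --- precisely the branch realized by the $r=4$ example above --- by exhibiting two weights, $(0\cdots011212)$ and $(0\cdots012211)$, with equal eigenvalue; those weights exist only when $r\ge6$. You would need to supply the coordinate analogue of this exclusion (and of the subsequent induction up the diagram, which the paper runs on its filtration $\cF^s$) before your outline becomes a proof of case (a).
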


\subsection*{Proof of Theorem \ref{T:B}}

Let $U$ be the irreducible $\fso_{2r+1}\bC$--module associated to $V_\bR$ as in \S\ref{S:RCQ}.  All representations of $\fso_{2r+1}\bC$ are self-dual; therefore, if $(V,\varphi)$ is principal, it is necessarily the case that $U$ is a real representation of $G_\bR$, by \eqref{E:RC}, and $V_\bC = U$.  By \eqref{E:Umf}, $U$ is weight multiplicity-free.  The irreducible, multiplicity-free representations of $\fso_{2r+1}\bC$ are given by Theorem \ref{T:mf}(b): either $U = \bC^{2r+1}$ is the standard representation, with highest weight 
$$
  \w_1 \ = \ \left(\s_1 \,+\,\s_2\,+ \cdots +\, \s_r \right) \,;
$$
or $U$ is the spin representation of highest weight
$$
  \w_r \ = \ \half \left( \s_1 \,+\, 2\,\s_2 \,+\cdots+\,r\,\s_r \right) \,.
$$
We consider each case below.

\subsubsection*{The standard representation}

Suppose that $U = \bC^{2r+1}$ is the standard representation.  The weights of $U$ are 
\begin{eqnarray*}
  \Lambda(V_\bC) & = & \{ \w_1 \} \,\cup\,\{ \w_1-(\s_1+\cdots+\s_i) \ | \ 1 \le i \le r \}\\
  & & \ \cup \
  \{ \w_1 - (\s_1+\cdots+\s_{i-1}) + 2(\s_i+\cdots+\s_r) \ |  \ 1 \le i \le r \} \,.
\end{eqnarray*}
It is straight forward to confirm that $\ttT_\varphi = \ttT^1+\cdots+\ttT^r$ is the unique grading element satisfying the normalization \eqref{E:n>=0} and such that the $\ttT_\varphi$--eigenvalues of $U$ satisfy \eqref{E:m_dim} and \eqref{E:ev}. 

In particular, $\ttT^\tcpt = 0$, by \eqref{E:Tcpt}, so that $U$ is real, by \eqref{E:RQtest}, as required by \eqref{E:RC}.

\subsubsection*{The spin representation: preliminaries}

Suppose that $U$ is the spin representation.  Then $\m = \w_r$.  The weights of $U$ are parameterized by 
\begin{subequations} \label{SE:spinoddwts}
\begin{equation}
  \cL(U) \ = \ \{ \lambda = (\lambda^1,\ldots,\lambda^r) \in \bZ^r \ | \ 
  \lambda^1 \in \{0,1\} \,,\ \hbox{and }
  \lambda^{i} - \lambda^{i-1} \in \{0,1\} \,,\ \forall \ 1 < i \le r \} \, ;
\end{equation}
specifically, 
\begin{equation}
  \Lambda(U) \ = \ \{ \w_r - \lambda^i\s_i \ | \ \lambda \in \cL(U) \} \,.
\end{equation}
\end{subequations}
It will be convenient to define (i) a filtration $\{0\} = \cF^0 \subset \cF^{1} \subset \cF^{2} \subset \cdots \subset \cF^{r-1} \subset \cF^r = \cL(U)$,
\begin{subequations} \label{SE:spin_filt_dcmp}
\begin{equation} \label{E:spin_filt}
  \cF^s \ \dfn \ 
  \{ \lambda \in \cL(U) \ | \ 0 = \lambda^1,\ldots,\lambda^{r-s} \} \,, \quad
  0 \le s \le r\, ;
\end{equation}
and (ii) a decomposition $\cL(U) = \cL^0\sqcup\cL^1\,\sqcup\cdots\sqcup\,\cL^r$, 
\begin{equation} \label{E:spin_dcmp}
  \cL^s \ \dfn \ \cF^s \backslash \cF^{s-1} 
  \ \stackrel{\eqref{SE:spinoddwts}}{=} \
  \{ \lambda \in \cL(U) \ | \ \lambda^{r-s} = 0 \,,\ \lambda^{r-s+1}=1 \} \,,
  \quad 0 \le s \le r\,,
\end{equation}
\end{subequations}
with the convention that $\cF^{-1} = \emptyset$. 

The spin representation is of dimension $2^r$.  By \eqref{E:m_dim} and \eqref{E:mmu},
\begin{equation} \label{E:spinoddm}
  \half(2^r-1) \ = \ m \ = \   \w_r(\ttT_\varphi)\,.
\end{equation}

\subsubsection*{The spin representation: examples}

Before continuing with the proof, we consider some examples.

\begin{example} \label{eg:spinodd2}
If $r = 2$, then $\tdim_\bC U = 4$, and the weights $\lambda \in \cL$ are 
$$
  (00) \,,\ (01) \,,\ (11) \,,\ (12) \, .
$$
This forces $\ttT_\varphi = \ttT^1 + \ttT^2$.  The $\ttT_\varphi$--graded decomposition of $U$ is $U_{3/2} \op U_{1/2} \op U_{-1/2} \op U_{-3/2}$.  Thus \eqref{E:m_dim} and \eqref{E:ev} hold.  Moreover, $\ttT^\tcpt = 0$, so that $\w_r(\ttT^\tcpt)$ is even; thus, $U$ is real.
\end{example}

\begin{example} \label{eg:spinodd3}
If $r = 3$, then $\tdim_\bC U = 8$, and the weights are 
$$
  (000) \,,\ (001) \,,\ 
  \begin{array}{c} (011) \\ (012) \end{array} \,,\ 
  \begin{array}{cc} (111) & (112) \\ (122) & (123) \end{array} \, .
$$
This forces $\ttT_\varphi = 2\ttT^1 + \ttT^2 + \ttT^3$.  For this grading element, $\w_r(\ttT_\varphi) = \half 7$, as required by \eqref{E:spinoddm}, and the eigenvalues $\{ \pm \half p \ | \ p=1,3,5,7 \}$ of $\ttT_\varphi$ are all multiplicity free.  Moreover, $\ttT^\tcpt = 2\ttT^1$, so that $\w_r(\ttT^\tcpt)=1$ is odd; thus, $U$ is quaternionic.
\end{example}

\begin{example} \label{eg:spinodd4}
If $r = 4$, then $\tdim_\bC U = 16$, and the weights are 
$$
  (0000) \,,\ (0001) \,,\ 
  \begin{array}{c} (0011) \\ (0012) \end{array} \,,\ 
  \begin{array}{c} (0111) \\ (0112) \\ (0122) \\ (0123) \end{array} \,,\
  \begin{array}{cc} (1111) & (1112) \\ (1122) & (1222) \\
    (1123) & (1223) \\ (1233) & (1234)
  \end{array} \, .
$$
So, in order to have multiplicity-free, $\ttT_\varphi$--eigenvalues, we must have $n_4 = n_3 = 1$, $n_2 = 2$ and $n_1 = 4$.  This choice does indeed give consecutive eigenvalues $\{ \pm\half p \ | \ p=1,3,5,\ldots,15\}$, and in particular satisfies $\w_4(\ttT_\varphi) = \half( 4 + 2\cdot2 + 3\cdot1 + 4 \cdot 1) = \half 15$, as required by \eqref{E:spinoddm}.    Moreover, $\ttT^\tcpt = 2(\ttT^1+\ttT^2)$, so that $\w_r(\ttT^\tcpt)=1 + 2 = 3$ is odd; thus, $U$ is quaternionic.
\end{example}

\begin{example} \label{eg:spinodd5}
If $r = 5$, then $\tdim_\bC U = 32$, and the weights are 
$$
  \begin{array}{c} (00000) \\ (00001) \\ (00011) \\ (00012) \end{array} \,,\ 
  \begin{array}{c} (00111) \\ (00112) \\ (00122) \\ (00123) \end{array} \,,\
  \begin{array}{cc} (01111) & (01112) \\ (01122) & (01222) \\
    (01123) & (01223) \\ (01233) & (01234)
  \end{array} \,,\
  \begin{array}{cccc}
     (11111) & (11112) & (11122) & (11123) \\
     (11222) & (11223) & (12222) & (12223) \\ 
     (11233) & (12233) & (11234) & (12333) \\ 
     (12234) & (12334) & (12344) & (12345) 
  \end{array} \, .
$$
In order for $\ttT_\varphi$ to have the consecutive, multiplicity-free eigenvalues required by \eqref{E:ev}, we must have $\ttT_\varphi = 8\ttT^1 + 4 \ttT^2 + 2 \ttT^3 + \ttT^4 + \ttT^5$.  For this grading element, \eqref{E:m_dim} holds.  Moreover, $\ttT^\tcpt = 2(\ttT^1+\ttT^2 + \ttT^3)$, so that $\w_r(\ttT^\tcpt)=1+2+3$ is even; thus, $U$ is real.
\end{example}

\subsubsection*{The spin representation: completing the proof}

We now return to the proof of Theorem \ref{T:B}.  Observe that 
$$
  \cF^{2} \ = \ 
  \left\{ (0\cdots0)\,,\ (0\cdots01)\,,\
     (0\cdots011)\,, \ (0\cdots012) \right\} \,.
$$  
The $\ttT_\varphi$--eigenvalues for these weights are 
$$
  \cF^{2}(\ttT_\varphi) \ = \ \left\{ m \,,\ m-n_r\,,\ 
  m-(n_{r-1}+n_r) \,,\
  m-(n_{r-1}+2n_r) \right\}\,.
$$
By \eqref{SE:spinoddwts}, all other weights of the representation satisfy
\begin{equation} \label{E:p1} 
  \lambda(\ttT_\varphi) \ \le \ m - (n_{r-2} + n_{r-1} + n_r)
  \quad\hbox{for all}\quad \lambda \not\in\cF^{2} \,.
\end{equation}
Therefore, the requirement \eqref{E:ev} forces 
$$
  n_r \ = \ n_{r-1} \ = \ 1 \,.
$$
These first four eigenvalues are 
\begin{equation} \label{E:cF2}
  \cF^{2}(\ttT_\varphi) \ = \ \{ m - p \ | \ p=0,1,2,3\}\,.
\end{equation}

As noted in \eqref{E:p1}, the largest $\ttT_\varphi$--eigenvalue $\lambda(\ttT_\varphi)$ among the $\lambda \not\in \cF^{2} $ is $m- (n_{r-2}+n_{r-1}+n_r) = m - 2 - n_{r-2}$, and is realized by the weight $\lambda = (0\cdots0111)$.  Given \eqref{E:cF2}, the requirement \eqref{E:ev} then forces
$$
  n_{r-2} \ = \ 2 \,.
$$
Note that $\cL^3 = \{(0\cdots0111)\,,\ (0\cdots0112)\,,\ (0\cdots0122) \,,\ (0\cdots0123)\}$, and the eigenvalues 
$$
  \cL^{3}(\ttT_\varphi) \ = \ \{ m-p \ | \ p=4,5,6,7\} \tand
  \cF^{3}(\ttT_\varphi) \ = \ \{ m-p \ | \ p=0,1,\ldots,7\} \, .
$$

We will establish \eqref{E:B} by induction.  We will need the formula
\begin{equation} \label{E:2t}
  2^{t+1}  \ = \ 
  1 \,+\,1 \,+\, 2 \,+\, 4 \,+\, 8 \,+\cdots+\,2^{t-1} \,+\, 2^t \,, 
\end{equation}
which is easily confirmed by an inductive argument.  Suppose that there exists $2 \le \sfs \le r-2$ such that
\begin{subequations}\label{SE:soIH}
\begin{eqnarray}
  \label{E:soIH1}
  n_{r - t} & = & 2^{t-1} \tand \\
  \label{E:soIH2}
  \cL^{t+1}(\ttT_\varphi) & = & 
  \{ m - p \ | \ p = 2^t , 2^t+1 , \ldots , 2^{t+1}-1 \}\,,
\end{eqnarray}
\end{subequations}
for all $2\le t \le \sfs$.  In the preceding paragraph, we saw that \eqref{SE:soIH} holds for $\sfs = 2$.  To complete the proof of the lemma we need to show that
\begin{subequations}\label{SE:soInd}
\begin{eqnarray}
  \label{E:soInd1}
  n_{r-\sfs-1} & = &  2^{\sfs} \tand \\
  \label{E:soInd2}
  \cL^{\sfs+2}(\ttT_\varphi) & = & \{ m - p \ | \ 
  p = 2^{\sfs+1} , 2^{\sfs+1}+1 , \ldots , 2^{\sfs+2}-1 \} \,.
\end{eqnarray}
\end{subequations}
Note that \eqref{E:soIH2} implies that the eigenvalues 
\begin{equation}\label{E:so1}
 \cF^{\sfs+1}(\ttT_\varphi)
  \ = \ \{ m - p \ | \ p = 0 , 1,2, \ldots , 2^{\sfs+1}-1 \}\,.
\end{equation}
By \eqref{SE:spinoddwts} and \eqref{E:soIH2}, 
\begin{equation} \label{E:maxFodd}
  m - (n_{r-s-1} + n_{r-s} + \cdots + n_r) \ = \ 
  \tmax\{ \lambda(\ttT_\varphi) \ | \ \lambda\not\in \cF^{s+1} \}
\end{equation}
is realized by the weight $\lambda = (0^{r-s-2}\,1^{s+2})$.  By \eqref{E:2t},  \eqref{E:soIH1} an \eqref{E:maxFodd} the largest eigenvalue amongst the $\{ \lambda(\ttT_\varphi) \ | \ \lambda \not\in \cF^{\sfs+1} \}$ is $m - (n_{r-\sfs-1} + 2^{\sfs-1} + 2^{\sfs-2} + \cdots + 2 + 1 + 1) = m - (n_{r-\sfs-1} + 2^{\sfs})$.  Given \eqref{E:so1} and \eqref{E:ev}, this observation,  forces $n_{r-\sfs-1} + 2^{\sfs} = 2^{\sfs+1}$.  That is, $n_{r-\sfs-1} = 2^{\sfs}$.  This establishes \eqref{E:soInd1}.

It remains to prove \eqref{E:soInd2}.  Given $\lambda \in \cL^{\sfs+2}$, note that \eqref{SE:spinoddwts} and \eqref{E:spin_dcmp} imply either $\lambda^{r-\sfs} = 1$ or $\lambda^{r-\sfs}=2$.  In particular, we have a disjoint union
$$
  \cL^{\sfs+2} \ = \ \cL^{\sfs+2}_1 \,\sqcup\, \cL^{\sfs+2}_2\,,
$$
given by 
$$
  \cL^{\sfs+2}_i \ \dfn \ 
  \{ \lambda \in \cL^{\sfs+2} \ | \ \lambda^{r-\sfs} = i\}\,.
$$  

Elements of $\cL^{\sfs+2}_1$ are of the form $\lambda = (0\cdots011\lambda^{r-\sfs+1}\cdots\lambda^r)$.  The map $\lambda \mapsto \lambda + \s_{r-\sfs-1} = (0\cdots001\lambda^{r-\sfs-1}\cdots\lambda^r)$ defines a bijection $\cL^{\sfs+2}_1 \to \cL^{\sfs+1}$.  Given \eqref{E:soIH2}, we have
\begin{subequations} \label{E:oddII}
\begin{equation} 
\renewcommand{\arraystretch}{1.3}
\begin{array}{rcl}
  \cL^{\sfs+2}_1(\ttT_\varphi) & = & 
  \{ \lambda(\ttT_\varphi) - 2^\sfs \ | \ \lambda \in \cL^{\sfs+1} \} \\
  & = & 
  \{ m - p \ | \ p = 2^{\sfs+1} , 2^{\sfs+1}+1 , \ldots , 
  3\cdot2^{\sfs}-1 \} \,.
\end{array}
\end{equation} 
Likewise, elements of $\cL^{\sfs+2}_2$ are of the form $\lambda = (0\cdots012\lambda^{r-\sfs+1}\cdots\lambda^r)$, and the map $\lambda \mapsto \lambda - (\s_{r-\sfs-1} + \s_{r-\sfs} + \cdots + \s_r) = (0\cdots001(\lambda^{r-\sfs+1}-1)\cdots(\lambda^r-1))$ defines a bijection $\cL^{\sfs+2}_2 \to \cL^{\sfs+1}$.  

With \eqref{E:2t} and \eqref{E:soIH2}, this implies that the eigenvalues 
\begin{equation} 
\renewcommand{\arraystretch}{1.3}
\begin{array}{rcl}
  \cL^{\sfs+2}_2(\ttT_\varphi)
  & = & 
  \{ \lambda(\ttT_\varphi) - \ (2^{\sfs} + 2^{\sfs-1} + \cdots + 2 + 1 + 1) 
  \ | \ \lambda\in\cL^{\sfs+1} \} \\
  & = & 
  \{ m - p \ | \ p = 3\cdot2^{\sfs} , 3\cdot2^{\sfs}+1 , \ldots , 2^{\sfs+2}-1 \} \,.
\end{array}
\end{equation}
\end{subequations}
Equations \eqref{E:oddII} establish \eqref{E:soInd2}.  This completes the proof of \eqref{E:B}.

\medskip

By \eqref{E:Tcpt}, $\ttT^\tcpt = 2 (\ttT^1 + \cdots + \ttT^{r-2})$.  Thus, $\m(\ttT^\tcpt) = \w_r(\ttT^\tcpt) = \sum_1^{r-2} i = \half(r-2)(r-1)$.  By \eqref{E:RQtest}, the self-dual $U$ is real, as required by \eqref{E:RC}, if and only if $(r-2)(r-1) \in 4 \bZ$.

\subsection*{Proof of Theorem \ref{T:D}}

Let $U$ be the irreducible $\fso_{2r}\bC$--module associated to $V_\bR$, cf. \S\ref{S:RCQ}.  If $(V,\varphi)$ is principal, $U$ is weight multiplicity-free by \eqref{E:Umf}.  By Theorem \ref{T:mf}(d), $U$ is either the standard representation $\bC^{2r}$ of highest weight
$$
  \w_1 \ = \ \s_1 \,+\cdots+\,\s_{r-2} \ + \ \half \left( \s_{r-1}\,+\,\s_r \right) \,,
$$
or one of the spin representations with highest weight
\begin{eqnarray*}
  \w_{r-1} & = & \half \left( \s_1 \,+\, 2\s_2 \,+\cdots+\,(r-2)\s_{r-2} \right) 
  \ + \ \tfrac{1}{4} \left( r\s_{r-1}\,+\,(r-2)\,\s_r \right) \,,\\
  \w_r & = & \half \left( \s_1 \,+\, 2\,\s_2 \,+\cdots+\,(r-2)\s_{r-2} \right) 
  \ + \ \tfrac{1}{4} \left( (r-2)\s_{r-1}\,+\,r\,\s_r \right)\,.
\end{eqnarray*}

\subsubsection*{The standard representation}

Suppose that $U$ is the standard representation.  Then $U$ is self-dual, and so either real or quaternionic.  By \eqref{E:RC}, if $(V,\varphi)$ is principal, then $U$ is real, so that $V_\bC = U$.  Therefore, the weights of $V_\bC$ are 
\begin{eqnarray*}
  \Lambda(V_\bC) & = & \{ \w_1 \,,\, \w_1-(\s_1+\cdots+\s_{r-2}+\s_r)\} 
  \,\cup\,\{ \w_1-(\s_1+\cdots+\s_i) \ | \ 1 \le i \le r \}\\
  & & \ \cup \
  \{ \w_1 - (\s_1+\cdots+\s_{i-1}) - 2(\s_i+\cdots+\s_{r-2}) - \s_{r-1}-\s_r \ | 
  \ 1 \le i \le r-2 \} \,.
\end{eqnarray*}
It is straight-forward to confirm that 
\begin{equation} \label{E:Dstd}
\begin{array}{rcl}
  \ttT_\varphi & = & \ttT^1+\cdots+\ttT^{r-2} + \ttT^{r-1} + 2\,\ttT^r 
  \quad\hbox{and}\\ 
  \ttT_\varphi & = & \ttT^1+\cdots+\ttT^{r-2} + 2\,\ttT^{r-1} + \ttT^r 
\end{array}
\end{equation}
are the only grading elements satisfying \eqref{E:m_dim} and yielding (multiplicity-free) eigenvalues \eqref{E:ev}.  For these two grading elements we have $\ttT^\tcpt=2\ttT^{r-1}$ and $\ttT^\tcpt=2\ttT^r$, respectively.   Therefore, $\w_1(\ttT^\tcpt)=1$ and $U$ is quaternionic, by \eqref{E:RQtest}, contradicting \eqref{E:RC}.  It follows that there exists no principle Hodge representation $(V,\varphi)$ of $G$ such that $U = \bC^{2r}$. 

\begin{remark} 
The argument above yields the following:  \emph{Let $G$ be a Hodge group with complex Lie algebra $\fg_\bC = \fso_{2r}\bC$.  Assume that $(V,\varphi)$ is a Hodge representation with the property that $V_\bR$ is an irreducible $G_\bR$--module, and the associated $\fg_\bC$--module $U$ (\S\ref{S:RCQ}) is the standard representation.  Let $\ttT_\varphi$ be the associated grading element (\S\ref{S:GE}), and assume the normalization \eqref{E:n>=0} holds.  Then Hodge numbers are $\bh = (2,2,\ldots,2)$ if and only if one of \eqref{E:Dstd} holds.}
\end{remark}

\subsubsection*{The spin representation, $r$ even}

If $r$ is even, then the spin representations are self-dual.  Arguing as above in the case of the standard representation, it is necessarily the case that $r > 4$.  (Else $U$ is quaternionic.)  The two spin representations have highest weight $\m = \w_{r-1}$ and $\m = \w_r$.  The arguments for the two cases are symmetric, so we will assume $\m = \w_r$.  The weights of $U$ are parameterized by 
\begin{subequations} \label{SE:spinevenwts}
\begin{equation} \label{E:spewts1}
\renewcommand{\arraystretch}{1.3}\begin{array}{rcl}
  \cL(U) \ = \ \big\{ \lambda = (\lambda^1,\ldots,\lambda^r) \in \bZ^r 
  & | & 
  \lambda^1,\,\lambda^r +\lambda^{r-1}-\lambda^{r-2},\, 
  \lambda^r-\lambda^{r-1} \in \{0,1\}\,,\\
  & & \lambda^i - \lambda^{i-1} \in \{0,1\} \,,\ \forall \ 1 < i \le r-2 \big\} \,;
\end{array}\end{equation}
specifically, 
\begin{equation}
  \Lambda(U) \ = \ \{ \w_r - \lambda^i\s_i \ | \ \lambda \in \cL(U) \} \,.
\end{equation}
\end{subequations}
Set $m = \w_r(\ttT_\varphi)$.  Given a weight $\lambda \in \cL(U)$, the corresponding $\ttT_\varphi$--eigenvalue is $\lambda(\ttT_\varphi) = m - \sum_i \lambda^i n_i$.
As in the proof of Theorem \ref{T:B}(b), we define a filtration $\cF^{3} \subset \cF^{4} \subset \cdots \subset \cF^{r-1} \subset \cF^r = \cL(U)$
\begin{subequations} \label{SE:spe_filt_dcmp} 
\begin{equation}
  \cF^s \ \dfn \ \{ \lambda \in \cL(U) \ | \ 
  0 = \lambda^1,\ldots,\lambda^{r-s}\}
\end{equation}
and decomposition $\cL(U) = \cL^{3} \sqcup \cL^{4} \sqcup\cdots\sqcup\cL^r$
\begin{equation}
  \cL^s \ \dfn \ \cF^s\backslash\cF^{s-1} \ = \ 
  \{ \lambda \in \cL(U) \ | \ \lambda^{r-s} = 0 \,,\ \lambda^{r-s+1}=1 \} \,,
\end{equation}
for $4 \le s \le r$, and 
\begin{equation}
  \cL^3 \ \dfn \ \cF^3 \ = \ \{ (0\cdots0)\,,\ (0\cdots01) \,,\ (0\cdots0101)
  \,,\ (0\cdots0111) \} \,.
\end{equation}
\end{subequations}
The eigenvalues of $\cL^{3}$ are 
\begin{equation} \label{E:1}
  \cL^{3}(\ttT_\varphi) \ = \ \{ m \,,\, m-n_r \,,\ m-(n_{r-2}+n_r) \,,\ 
  m-(n_{r-2}+n_{r-1}+n_r) \} \, .
\end{equation}
In general, 
\begin{equation} \label{E:2}
  \tmax\{ \lambda(\ttT_\varphi) \ | \ \lambda \not\in \cF^s \} 
  \ = \  m - (n_{r-s} + n_{r-s+1} + \cdots + n_{r-2} + n_r) \,
\end{equation}
is realized by the weight $\lambda = (0^{r-s-1}\,1^{s-1}01)$.  Therefore, $\tmax\{\lambda(\ttT_\varphi) \ | \ \lambda \not\in \cF^3 = \cL^3 \} = m-(n_{r-3} + n_{r-2} + n_r)$.   So, given \eqref{E:1}, the condition \eqref{E:ev} forces
$$
  n_r \ = \ n_{r-2} \ = \ 1 \, .
$$
This updates \eqref{E:1} to $\cL^{3}(\ttT_\varphi) = \{ m \,,\, m-1 \,,\ m-2 \,,\ m-(2+n_{r-1}) \}$.  From $\cL^{4} = \{ (0\cdots01101) \,,\ (0\cdots01111) \,,\ 
(0\cdots01211) \,,\ (0\cdots01212) \}$, we see that the eigenvalues of $\cF^4 = \cL^3 \sqcup \cL^4$ are 
\begin{equation} \label{E:3}
\renewcommand{\arraystretch}{1.3}\begin{array}{rcl}
  \cF^{4}(\ttT_\varphi) & = & \{
  m \,,\, m-1 \,,\ m-2 \,,\ m-(2+n_{r-1}) \,,\ m-(2+n_{r-3}) \,,\\ 
  & & \hsp{7pt} 
  m-(2+n_{r-3}+n_{r-1}) \,, m-(3+n_{r-3}+n_{r-1}) \,,\\ 
  & & \hsp{7pt}  m-(4+n_{r-3}+n_{r-1}) \} \,.
\end{array} \end{equation}
Given \eqref{E:2} and \eqref{E:3}, the condition \eqref{E:ev} implies one of the following two holds
\begin{subequations}
\begin{eqnarray}
  \label{E:op1}
  n_{r-1} = 1 &\hbox{and}& n_{r-3} = 2\,, \quad \hbox{or}\\
  \label{E:op2}
  n_{r-1} = 2 &\hbox{and}& n_{r-3} = 1 \,.
\end{eqnarray}
\end{subequations}
I claim that \eqref{E:op2} is not possible.  For, if \eqref{E:op2} were to hold, then both weights $(0\cdots011212)$, $(0\cdots012211) \in \cL^5$ would have eigenvalue $m-(n_{r-4}+7)$, contradicting the requirement \eqref{E:Tmf} that the eigenvalues have multiplicity one.  Therefore \eqref{E:op1} holds.  The eigenvalues of $\cF^4$ are 
\begin{equation} \nonumber 
  \cF^4(\ttT_\varphi) \ = \ \{ m-p \ | \ p=0,1,\ldots,7\} \,.
\end{equation}

We will establish \eqref{E:Da} by induction.  Suppose that there exists $2 \le \sfs \le r-2$ such that
\begin{subequations}
\begin{eqnarray}
  \label{E:Deveni}
   n_{r - s} & = & 2^{s-2} \quad\hbox{and}\\
  \label{E:Devenii}
  \cF^{s+1}(\ttT_\varphi) & = & \{ m - p \ | \ p = 0,1,\ldots, 2^{s}-1 \} \,,
\end{eqnarray}
\end{subequations}
for all $2 \le s \le \sfs$.  The discussion above establishes these inductive hypotheses for $\sfs = 3$.  To complete the proof of \eqref{E:Da} we need to show that
\begin{subequations}
\begin{eqnarray}
  \label{E:DevenI}
   n_{r-\sfs-1} & = &  2^{\sfs-1} \quad\hbox{and}\\
  \label{E:DevenII}
  \cL^{\sfs+2} & = &  
  \{ m - p \ | \ p = 2^{\sfs} , 2^{\sfs}+1 , \ldots , 2^{\sfs+1}-1 \} \,.
\end{eqnarray}
\end{subequations}
Given \eqref{E:Devenii}, the requirement \eqref{E:ev} that the $\ttT_\varphi$--eigenvalues be consecutive forces the largest eigenvalue $\lambda(\ttT_\varphi)$ among the $\lambda \not\in \cF^{s+1}$ to be $m-2^\sfs$.  By \eqref{E:2}, \eqref{E:Deveni} and \eqref{E:2t}, this largest eigenvalue is $m-n_{r-\sfs-1} + 2^{\sfs-2} + 2^{\sfs-3} + \cdots + 2 + 1 + 1 = m-n_{r-\sfs-1} + 2^{\sfs-1}$.  Therefore, $n_{r-\sfs-1} = 2^{\sfs-1}$, establishing \eqref{E:DevenI}.

To prove \eqref{E:DevenII}, note that \eqref{E:spewts1} implies that 
\begin{subequations}\label{SE:4}
\begin{equation} \label{E:4a}
  \cL^{\sfs+2} \ = \ \cL^{\sfs+2}_1 \,\sqcup\,\cL^{\sfs+2}_2 \,
\end{equation}
where 
$$
  \cL^{\sfs+2}_i \ = \ \{ \lambda \in \cL(U) \ | \ \lambda^{r-\sfs-2} = 0 \,,\
  \lambda^{r-\sfs-1}=1 \,,\ \lambda^{r-\sfs} = i \} \, .
$$
The map $\lambda = (0\cdots011\cdots) \mapsto \lambda + \s_{r-\sfs-1} = (0\cdots001\cdots)$ defines a bijection $\cL^{\sfs+2}_1 \to \cL^{\sfs+1}$.  By \eqref{E:Devenii}, the eigenvalues of $\cL^{\sfs+1} = \cF^{\sfs+1}\backslash\cF^\sfs$ are $\cL^{\sfs+1}(\ttT_\varphi) = \{ m - p \ | \ p=2^{\sfs-1}, 2^{\sfs-1}+1,\ldots,2^\sfs-1 \}$.  So, by \eqref{E:Devenii}, the eigenvalues of $\cL^{\sfs+2}_1$ are 
\begin{equation}
  \cL^{\sfs+2}_1(\ttT_\varphi) \ = \ \{ \lambda(\ttT_\varphi) - n_{r-\sfs-1} \ | \ 
  \lambda \in\cL^{\sfs+1} \} \ = \ 
  \{ m - p \ | \ p=2^\sfs,2^\sfs+1,\ldots,3\cdot 2^{\sfs-1}-1 \} \,.
\end{equation}

Similarly, the map 
$$
  \lambda \ \mapsto \ 
  \left\{ \begin{array}{ll}
  \lambda+\s_{r-\sfs-1}+\cdots+\s_{r-2}+\s_r\,, \ & 
  \hbox{ if } \lambda^r = \lambda^{r-1} +1 \,,\\
  \lambda+\s_{r-\sfs-1}+\cdots+\s_{r-1} \,,\ 
  & \hbox{ if }  \lambda^r=\lambda^{r-1}\,.
  \end{array} \right.
$$
defines a bijection $\cL^{\sfs+2}_2 \to \cL^{\sfs+1}$.  Arguing as above, and making use of \eqref{E:2t}, the eigenvalues of $\cL^{\sfs+2}_2$ are 
\begin{equation} \renewcommand{\arraystretch}{1.3}
\begin{array}{rcl}
  \cL^{\sfs+2}_2(\ttT_\varphi) & = &
   \{ \lambda(\ttT_\varphi) - (2^{\sfs-1}+2^{\sfs-2}+\cdots+2+1+1) \ | \ 
   \lambda \in \cL^{\sfs+1} \} \\
   & = & \{ \lambda(\ttT_\varphi) - 2^\sfs \ | \ \lambda \in \cL^{\sfs+1} \}
   \ = \ \{ m-p \ | \ p=3\cdot2^{\sfs-1},\cdots,2^{\sfs+1}-1 \} \,.
\end{array}
\end{equation}
\end{subequations}
Item \eqref{E:DevenII} now follows from \eqref{SE:4}, and \eqref{E:Da} is established.

\medskip

By \eqref{E:Tcpt}, $\ttT^\tcpt = 2 (\ttT^1 + \cdots + \ttT^{r-3})$.  Thus, $\m(\ttT^\tcpt) = \w_r(\ttT^\tcpt) = \sum_1^{r-3} i = \half(r-3)(r-2)$.  By \eqref{E:RQtest}, the self-dual $U$ is real, as required by \eqref{E:RC}, if and only if $(r-3)(r-2) \in 4 \bZ$.

\subsubsection*{The spin representation, $r$ odd}

If $r$ is odd, then $U^*_{\w_{r-1}} = U_{\w_r}$.  In particular, the spin representations are not self-dual, and $V_\bC = U_{\w_{r-1}} \op U_{\w_r}$.  Without loss of generality, we will assume that the representation $U$ associated to $V_\bR$ in \S\ref{S:RCQ} is $U_{\w_r}$; that is, $\m = \w_r$.  (With the normalization \eqref{E:m*}, this will yield the first grading element of \eqref{E:Db}.  Taking $\m = \w_{r-1}$ yields the second grading element.  The two arguments are symmetric, and we will give only the first.)

The weights of $U$ are given by \eqref{SE:spinevenwts}.  The weights of $U^*$ are parameterized by 
\begin{subequations} \label{SE:spedualwts}
\begin{equation} \label{E:spedwts1}
\renewcommand{\arraystretch}{1.3}\begin{array}{rcl}
  \cL(U^*) \ = \ \big\{ \m=[\m_1,\ldots,\m_r] \in \bZ^r 
  & | & 
  \m^1,\,\m^r +\m^{r-1}-\m^{r-2},\, 
  \m^{r-1}-\m^{r} \in \{0,1\}\,,\\
  & & \m^i - \m^{i-1} \in \{0,1\} \,,\ \forall \ 1 < i \le r-2 \big\} \,;
\end{array}\end{equation}
specifically, 
\begin{equation}
  \Lambda(U^*) \ = \ 
  \{ \w_{r-1} - \m^i\s_i \ | \ \m \in \cL(U^*) \} \,.
\end{equation}
\end{subequations}
Following \eqref{SE:spe_filt_dcmp}, we define a filtration $\cG^{3} \subset \cG^{4} \subset \cdots \subset \cG^{r-1} \subset \cG^r = \cL(U^*)$
\begin{subequations} \label{SE:sped_filt_dcmp} 
\begin{equation}
  \cG^s \ \dfn \ \{ \m \in \cL(U^*) \ | \ 
  0 = \m^1,\ldots,\m^{r-s}\}
\end{equation}
and decomposition $\cL(U^*) = \cM^{3} \sqcup \cM^{4} \sqcup\cdots\sqcup\cM^r$
\begin{equation}
  \cM^s \ \dfn \ \cG^s\backslash\cG^{s-1} \ = \ 
  \{ \m\in \cL(U^*) \ | \ \m^{r-s} = 0 \,,\ \m^{r-s+1}=1 \} \,,
\end{equation}
for $4 \le s \le r$, and 
\begin{equation}
  \cM^3 \ \dfn \ \cG^3 \ = \ \{ [0\cdots0]\,,\ [0\cdots010] \,,\ [0\cdots0110]
  \,,\ [0\cdots0111] \} \,.
\end{equation}
\end{subequations}

I claim that 
\begin{equation} \label{E:seo1}
  n_r \ = \ 3 \tand n_{r-1} \ = \ 1 \, .
\end{equation}
To see this, recall that $m = \w_r(\ttT_\varphi)$, by \eqref{E:mmu}, and $m^* = \w_{r-1}(\ttT_\varphi)$, by \eqref{E:m*}.  By Lemma \ref{L:iv}(b), 
$$
  0 \ < \ m - m^* \ = \ \w_r(\ttT_\varphi) - \w_{r-1}(\ttT_\varphi) 
  \ = \ \half(n_r - n_{r-1}) \ \in \ \bZ \,.
$$ 
With \eqref{E:n>0}, this implies $0 < n_r - n_{r-1} \in 2 \bZ$.  Therefore, $n_r \ge 3$.  By Lemma \ref{L:iv}(b), 
$$
  m - m^* \ = \ 1
$$
and $n_{r-1} = 1$.  Thus, $n_r = 3$. 

The eigenvalues associated to the weights $\cL^3 \cup \cM^3$ are 
\begin{equation} \label{E:LM3}
\renewcommand{\arraystretch}{1.3}
\begin{array}{rcl}
  \cL^3(\ttT_\varphi) & = & \{ m \,,\ m-3\,,\ m-3-n_{r-2} \,,\, m-4-n_{r-2} \}\\
  \cM^3(\ttT_\varphi) & = & \{ m-1 \,,\ m-2 \,,\ m-2-n_{r-2}\,,\ m-5-n_{r-2} \}\,.
\end{array}\end{equation}
If $n_{r-2}=1$, then the eigenvalue $m-3$ will have multiplicity greater than one; so, the condition \eqref{E:mf} that the $\ttT_\varphi$--eigenvalues have multiplicity one forces
$$
  n_{r-2} \ > \ 1 \, .
$$
In analogy with \eqref{E:2},  
\begin{equation} \label{E:5}
\renewcommand{\arraystretch}{1.3}
\begin{array}{rcl}
  \tmax\{ \m(\ttT_\varphi) \ | \ \m \not\in \cG^s\} & = & 
  \w_{r-1}(\ttT_\varphi) - (n_{r-s} + n_{r-s+1} + \cdots + n_{r-1}) \\
  & = &  m - 1 -(n_{r-s} + n_{r-s+1} + \cdots + n_{r-1}) \,
\end{array}\end{equation}
is realized by $\lambda = ( 0^{r-s-1}\,1^{s}0)$.  By \eqref{E:2} and \eqref{E:5} the largest $\ttT_\varphi$--eigenvalues amongst the $\lambda \not\in\cL^3 = \cF^3$ and the $\m \not\in\cM^3 = \cG^3$ are 
\begin{equation} \label{E:maxLM3}
  m - (n_{r-3} + n_{r-2} + 3) \tand
  m - (n_{r-3} + n_{r-2} + 2) \,.
\end{equation}
So, if $n_{r-2} > 2$, then \eqref{E:n>0}, \eqref{E:LM3} and \eqref{E:maxLM3} imply $m-4$ will not appear as an eigenvalue.  This contradicts the requirement \eqref{E:ev} that the $\ttT_\varphi$--eigenvalues be consecutive; thus 
\begin{equation} \label{E:seo2}
  n_{r-2} \ = \ 2 \,.
\end{equation}
From \eqref{E:seo1}, \eqref{E:LM3} and \eqref{E:seo2}, we see that the $\ttT_\varphi$--eigenvalues of $\cL^3 \cup \cM^3$ are 
$$
  \cL^3(\ttT_\varphi) \cup \cM^3(\ttT_\varphi) \ = \ 
  \{ m - p \ | \ p=0,1,\ldots,7\} \,.
$$

We will complete the proof that $\ttT_\varphi$ is the first grading element of \eqref{E:Db} by induction.  Suppose that there exists $2 \le \sfs \le r-2$ such that:
\begin{subequations}
\begin{eqnarray}
 \label{E:Doddi}
 n_{r-s} & = &  2^{s-1} \quad \hbox{and}\\
 \label{E:Doddii}
 \cF^{s+1}(\ttT_\varphi) \cup \cG^{s+1}(\ttT_\varphi) & = &  
 \{ m-p \ | \ p=0,1,\ldots,2^{s+1}-1 \} \,,
\end{eqnarray}
\end{subequations}
for all $2 \le s \le \sfs$.  Keeping in mind that $\cL^3=\cF^3$ and $\cM^3 = \cG^3$, we have seen that this inductive hypothesis holds for $\sfs = 2$.  To complete the induction (and proof of the lemma), we must show that
\begin{subequations}
\begin{eqnarray}
 \label{E:DoddI}
  n_{r-\sfs-1} & = &  2^{\sfs} \quad \hbox{and}\\
 \label{E:DoddII}
 \cL^{\sfs+2}(\ttT_\varphi) \cup \cM^{\sfs+2}(\ttT_\varphi) & = & 
  \{ m - p \ | \ p=2^{\sfs+1},2^{\sfs+1}+1,\ldots, 2^{\sfs+2}-1\} \,.
\end{eqnarray}
\end{subequations}
By \eqref{E:2}, \eqref{E:Doddi} and \eqref{E:2t}, 
\begin{eqnarray*}
  \tmax\{ \lambda(\ttT_\varphi) \ | \ \lambda \not\in \cF^{\sfs+1} \} 
  & = & m-(n_{r-\sfs-1} + 2^{\sfs-1} + \cdots + 4 + 2 + 3) \\
  & = & m - (n_{r-\sfs-1} + 2^{\sfs} + 1) \,.
\end{eqnarray*}  
Similarly, \eqref{E:5}, \eqref{E:Doddi} and \eqref{E:2t} yield 
\begin{eqnarray*}
  \tmax\{ \m(\ttT_\varphi) \ | \ \m \not\in \cG^{\sfs+1} \} 
  & = & m-1-(n_{r-\sfs-1} + 2^{\sfs-1} + \cdots + 4 + 2 + 1) \\
  & = & m - (n_{r-\sfs-1} + 2^{\sfs}) \,.
\end{eqnarray*}
On the other hand \eqref{E:Doddii} and the requirement \eqref{E:ev} that the eigenvalues of $V_\bC$ be consecutive imply $\tmax\{ \lambda(\ttT_\varphi) , \m(\ttT_\varphi) \ | \ \lambda \not\in \cF^{\sfs+1} \,,\ \m \not\in \cG^{\sfs+1}\} = m-2^{\sfs+1}$.  Therefore, $n_{r-\sfs-1} = 2^{\sfs}$, establishing \eqref{E:DoddI}.

In analogy with \eqref{E:4a}, observe that \eqref{E:spedwts1} implies 
\begin{subequations} \label{SE:6}
\begin{equation}
  \cM^{\sfs+2} \ = \ \cM^{\sfs+2}_1 \,\cup\,\cM^{\sfs+2}_2\,,
\end{equation}
where
$$
  \cM^{\sfs+2}_i \ = \ \{ \m \in \cL(U^*) \ | \ \m^{r-\sfs-2} = 0 \,,\ 
  \m^{r-\sfs-1} = 1 \,,\ \m^{r-\sfs} = i \} \,.
$$
The assignment $\n \mapsto \n + \s_{r-\sfs-1}$ defines a bijection $\cL^{\sfs+2}_1 \cup \cM^{\sfs+2}_1 \to \cL^{\sfs+1} \cup \cM^{\sfs+1}$.  The hypothesis \eqref{E:Doddii} implies $\cL^{\sfs+1}(\ttT_\varphi)\cup\cM^{\sfs+1}(\ttT_\varphi) = \{ m - p \ | \ p = 2^\sfs, 2^{\sfs}+1,\ldots, 2^{\sfs+1}-1\}$.  These observations, taken with \eqref{E:DoddI}, yield
\begin{equation}
  \cL^{\sfs+2}_1(\ttT_\varphi) \cup \cM^{\sfs+2}_1(\ttT_\varphi) \ = \ 
  \{ m - p \ | \ p=2^{\sfs+1},2^{\sfs+1}+1,\ldots,3\cdot2^\sfs - 1 \} \, .
\end{equation}
Likewise, the assignment $\n \mapsto \n +  \s_{r-\sfs-1} + 2(\s_{r-\sfs}+\cdots+\s_{r-2}) + \s_{r-1} + \s_r$ defines a bijection $\cL^{\sfs+2}_2 \cup \cM^{\sfs+2}_2 \to \cF^{\sfs} \cup \cG^{\sfs}$.  Taken with \eqref{E:Doddii}, \eqref{E:DoddI} and \eqref{E:2t}, this yields
\begin{equation} \renewcommand{\arraystretch}{1.3}
\begin{array}{rcl}
  \cL^{\sfs+2}_1(\ttT_\varphi) \cup \cM^{\sfs+2}_1(\ttT_\varphi) & = & 
  \{ \n(\ttT_\varphi) - 2^\sfs - 2( 2^{\sfs-1}+\cdots+2)-(1+3)\ | \ 
  \n \in \cF^\sfs \cup \cG^\sfs \} \\
  & = & 
  \{ \n(\ttT_\varphi) - 3\cdot2^\sfs \ | \ 
  \n \in \cF^\sfs \cup \cG^\sfs \} \\
  & = & 
  \{ m-p \ | \ p =  3\cdot2^\sfs , 3\cdot2^\sfs+1,\ldots,2^{\sfs+2}-1\}\,.
\end{array}
\end{equation}
\end{subequations}
Item \eqref{E:DoddII} now follows from \eqref{SE:6}.  This yields (the first grading element of) \eqref{E:Db}.

\section{Exceptional groups}

\begin{theorem} \label{T:E}
Let $G$ be a Hodge group with complex Lie algebra $\fg_\bC = \fe_6(\bC), \fe_7(\bC)$.  Then $G$ does not admit a principal Hodge representation $(V,\varphi)$.
\end{theorem}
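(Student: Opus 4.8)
The plan is to mirror the strategy used for the spin representations in Theorems \ref{T:B} and \ref{T:D}: reduce to weight multiplicity-free modules, then attempt to construct the grading element $\ttT_\varphi$ by descending the weight poset from the highest weight, using the requirement that the $\ttT_\varphi$--eigenvalues be consecutive and multiplicity-free to force the integers $n_i$ one level at a time. By \eqref{E:Umf} and Theorem \ref{T:mf}(e) the only candidates are: for $\fe_6(\bC)$, the $27$--dimensional modules of highest weight $\w_1$ or $\w_6$; and for $\fe_7(\bC)$, the $56$--dimensional module of highest weight $\w_7$. Throughout I write each weight as $\lambda = \m - \lambda^i\s_i$ and set the height $h(\lambda) = \sum_i n_i\lambda^i$, so that $\lambda(\ttT_\varphi) = m - h(\lambda)$; the representation is principal precisely when the multiset $\{\, h(\lambda) \mid \lambda \in \Lambda(V_\bC)\,\}$ equals $\{0,1,\ldots,N-1\}$ with each value occurring exactly once, where $N = \tdim_\bC V_\bC$.

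First I would treat $\fe_7(\bC)$. Every $\fe_7$--module is self-dual, so by \eqref{E:RC} the module $U$ must be real and $V_\bC = U$, with $N = 56$ and $m = 55/2$; Lemma \ref{L:iv}(a) then gives $n_7 = 1$. Descending the (minuscule) weight poset from $\m = \w_7$, the top is a chain $\w_7 \to \w_7 - \s_7 \to \w_7 - \s_7 - \s_6 \to \cdots$ running down the long arm of the diagram to the trivalent node, and consecutiveness forces $n_7 = n_6 = n_5 = n_4 = 1$. At the trivalent node the poset branches, and the two weights at the next level force $\{n_2,n_3\} = \{1,2\}$, after which the single weight two steps further down forces $n_1 \in \{2,3\}$. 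The key point is that in \emph{either} branch the poset reconverges so that two distinct weights acquire the same height — the eigenvalue $m-8$ is realized twice. This violates the multiplicity-free requirement \eqref{E:Tmf}, so no principal Hodge representation exists.

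For $\fe_6(\bC)$ the modules $\w_1$ and $\w_6$ are dual to one another and hence complex, so $V_\bC = U \op U^*$ with $N = 54$ and $m = 53/2$. Here I would proceed exactly as in the proof of Theorem \ref{T:D}(b): track the two minuscule posets of $U = \w_1$ and $U^* = \w_6$ simultaneously, initialize the top values of the $n_i$ using Lemma \ref{L:iv}(b) (which ties $m-m^*$ to $n_1$ and $n_6$), and then force the remaining $n_i$ level by level from the demand that the interleaved eigenvalues of $U$ and $U^*$ fill $\{-m,\ldots,m\}$ without repetition. As in the $\fe_7$ case I expect the branching of the poset at the trivalent node to produce an unavoidable collision (a repeated eigenvalue) or a skipped value, contradicting \eqref{E:ev} and \eqref{E:Tmf}.

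The main obstacle — and the reason the exceptional cases behave differently from the classical ones — is the combinatorics of these minuscule posets. For the spin representations the underlying poset is Boolean, and the ``powers of two'' grading elements of \eqref{E:B} and \eqref{E:Da} spread the $2^r$ weights bijectively onto $\{0,1,\ldots,2^r-1\}$; this self-similar recursion is precisely what the inductions in Theorems \ref{T:B} and \ref{T:D} exploit. The $E_6$ and $E_7$ minuscule posets branch at the trivalent Dynkin node but lack any such recursive structure, and the heart of the proof is to verify that no assignment of positive integers $n_i$ can separate all the weights into consecutive eigenvalues — equivalently, to locate the first forced collision. Carrying out this finite but slightly tedious bookkeeping is the substance of the argument.
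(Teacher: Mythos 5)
Your reduction to the $27$-- and $56$--dimensional modules is fine, but both halves of the argument have genuine gaps. For $\fe_7(\bC)$, the forcing claims are wrong. Consecutiveness at the first branch point only forces $\tmin(n_2,n_3)=1$ (so that $m-5$ is realized), not $\{n_2,n_3\}=\{1,2\}$: the assignment $n_3=1$, $n_1=1$, $n_2=3$ realizes $m-5$, $m-6$, $m-7$ through $(0011111)$, $(1011111)$, $(0101111)$ and survives this stage, so it is a case your analysis never sees. More importantly, the claimed collision at $m-8$ is not forced. Take $(n_1,\ldots,n_7)=(6,1,2,1,1,1,1)$: the twelve weights with $\lambda^1=0$ then give exactly the eigenvalues $m,m-1,\ldots,m-11$ --- in particular $(0112111)$ alone gives $m-8$ --- and $(1011111)$ gives $m-12$, so there is no repeated eigenvalue anywhere in the upper part of the poset and no contradiction with \eqref{E:Tmf} arises locally. (Your ``$n_1\in\{2,3\}$'' seems to come from insisting that $(1011111)$ realize $m-8$, but the weights $(0112111),\ldots,(0112222)$ already cover $m-8,\ldots,m-11$, so $n_1$ is unconstrained until one needs $m-12$.) What kills this assignment is the global constraint you state but never use: by \eqref{E:m_dim} and \eqref{E:mmu}, $m=\w_7(\ttT_\varphi)=\half 55$, whereas this assignment gives $\w_7(\ttT_\varphi)=\half 41$. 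The paper's proof is exactly such a case analysis --- the three consistent assignments $(6,1,2,1,\ldots,1)$, $(1,3,1,1,\ldots,1)$, $(7,2,1,1,\ldots,1)$ are each eliminated because $\w_7(\ttT_\varphi)\not=\half 55$. A collision must of course occur eventually (only $2\,\w_7(\ttT_\varphi)+1<56$ distinct eigenvalues are available), but that pigeonhole \emph{is} the dimension constraint; a purely local, collision-hunting argument of the kind you describe does not close the case.

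For $\fe_6(\bC)$ you give no argument at all, only the expectation that ``$\fe_7$-style'' bookkeeping will produce a collision --- and since the $\fe_7$ bookkeeping is itself flawed, nothing is established. You also missed that this case needs no weight combinatorics whatsoever: $\tdim_\bC V_\bC=54$ gives $m=\half 53$ by \eqref{E:m_dim}, while $\w_1(\ttT_\varphi)=\third\left(4n_1+3n_2+5n_3+6n_4+4n_5+2n_6\right)$, so \eqref{E:mmu} would force $53\cdot 3 = 2\left(4n_1+3n_2+5n_3+6n_4+4n_5+2n_6\right)$, an odd number equal to an even one. That two-line parity contradiction is the paper's entire $\fe_6$ proof. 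A smaller point: the theorem does not assume $V_\bR$ irreducible, so both cases also require the reduction to $s=1$ in a decomposition $V_\bR=W_1\op\cdots\op W_s$ (the paper gets this from \eqref{E:mf}); your proposal takes irreducibility for granted.
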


\begin{theorem} \label{T:G2}
Let $G$ be a Hodge group with complex Lie algebra $\fg_\bC = \fg_2(\bC)$.  Assume that $(V,\varphi)$ is a Hodge representation.  Let $\ttT_\varphi$ be the associated grading element \emph{(\S\ref{S:GE})}, and assume the normalization \eqref{E:n>=0} holds.  Then the Hodge representation is principal if and only if $V_\bC = \bC^7$ and $\ttT_\varphi = \ttT^1 + \ttT^2$.
\end{theorem}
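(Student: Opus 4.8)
The plan is to mirror the proofs of Theorems \ref{T:C} and \ref{T:B}, exploiting the fact that $\fg_2(\bC)$ has rank two and only one nontrivial weight multiplicity-free representation, so that the entire argument collapses to a short finite computation rather than an induction. First I would record that every representation of $\fg_2(\bC)$ is self-dual; hence the $\fg_\bC$--module $U$ of \S\ref{S:RCQ} cannot be complex, and \eqref{E:RC} forces $U$ to be real with $V_\bC = U$. Because the theorem does \emph{not} assume $V_\bR$ irreducible, I would next observe that the necessary condition \eqref{E:mf} --- that $V_\bC$ be weight multiplicity-free --- already pins down $V_\bC$: by Theorem \ref{T:mf}(e) the only nontrivial weight multiplicity-free irreducible $\fg_2(\bC)$--module is $\bC^7$, and its weights are the six short roots together with the zero weight. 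Thus a direct sum containing $\bC^7$ and any further nontrivial summand (or a trivial summand) would repeat the zero weight, and any irreducible other than $\bC^7$ or the trivial module is not weight multiplicity-free on its own. Since injectivity of $\pi_*$ excludes a purely trivial module, the only surviving possibility is $V_\bC = \bC^7$, with highest weight $\m = \w_1$; and as $\bC^7$ is irreducible and real, $V_\bR$ turns out to be irreducible after all.

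With $V_\bC = \bC^7$ in hand, the remaining step is purely combinatorial. Writing $\w_1 = 2\s_1 + \s_2$, the seven weights of $\bC^7$ are, in the $(\lambda^1\lambda^2)$ notation of \S\ref{S:wts},
\[
  (00)\,,\ (10)\,,\ (11)\,,\ (21)\,,\ (31)\,,\ (32)\,,\ (42)\,.
\]
Setting $\ttT_\varphi = n_1\ttT^1 + n_2\ttT^2$, the $\ttT_\varphi$--eigenvalue attached to the weight $(\lambda^1\lambda^2)$ is $m - (\lambda^1 n_1 + \lambda^2 n_2)$, where $m = \w_1(\ttT_\varphi) = 2n_1 + n_2$ by \eqref{E:mmu}.

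Now I would impose the principality constraints. Equations \eqref{E:m_dim} and \eqref{E:mmu} give $m = \half(7-1) = 3$, that is $2n_1 + n_2 = 3$; combined with the strict positivity \eqref{E:n>0} (so $n_1,n_2 \ge 1$) this has the unique solution $n_1 = n_2 = 1$, forcing $\ttT_\varphi = \ttT^1 + \ttT^2$. Substituting $n_1 = n_2 = 1$ into the table gives the eigenvalues $\{3,2,1,0,-1,-2,-3\}$, which are consecutive and multiplicity-free, so \eqref{E:ev} holds and the representation is indeed principal. Finally, for consistency with reality: since $n_1$ and $n_2$ are both odd, \eqref{E:Tcpt} yields $\ttT^\tcpt = 0$, whence $\m(\ttT^\tcpt) = 0$ is even and $U$ is real by \eqref{E:RQtest}, exactly as required by \eqref{E:RC}.

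There is no serious obstacle here: unlike the spin-representation cases of Theorems \ref{T:B} and \ref{T:D}, the rank-two structure of $\fg_2(\bC)$ reduces the eigenvalue condition to a single linear equation $2n_1 + n_2 = 3$ with a unique admissible solution. The one point genuinely requiring care is that $V_\bR$ is not assumed irreducible, so the exclusion of reducible $V_\bC$ must be carried out using the weight multiplicity-free condition \eqref{E:mf} together with Theorem \ref{T:mf}(e), rather than an irreducibility hypothesis; once that reduction is made, the rest is a direct verification.
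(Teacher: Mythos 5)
Your proposal is correct and takes essentially the same approach as the paper: reduce to $V_\bC = \bC^7$ using the weight multiplicity-free condition \eqref{E:mf} together with Theorem \ref{T:mf}(e) (ruling out extra summands, which would repeat the zero weight), exclude the quaternionic possibility via $\ttT^\tcpt = 0$ and \eqref{E:RQtest}, and then determine the grading element. The only difference is minor and harmless: you pin down $n_1 = n_2 = 1$ from the dimension constraint $m = \half(7-1) = 3 = 2n_1 + n_2$ combined with the positivity \eqref{E:n>0}, whereas the paper reads it off from the consecutiveness requirement \eqref{E:ev} applied to the weights $\w_1$, $\w_1 - \s_1$, $\w_1 - (\s_1+\s_2)$; both arguments are immediate, and your explicit treatment of possible trivial summands is, if anything, slightly more careful than the paper's.
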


\subsection*{Proof of Theorem \ref{T:E} for \boldmath$\fg_\bC = \fe_6(\bC)$\unboldmath}

We argue by contradiction.  Suppose a principal Hodge representation exists.  Let $W_1 \op\cdots \op W_s$ be a decomposition of $V_\bR$ into irreducible $G_\bR$--modules.  Let $U_j$ be the irreducible $G_\bC$--module associated to $W_j$, as in \S\ref{S:RCQ}.  By \eqref{E:Umf} and Theorem \ref{T:mf}(e), the highest weight of $U_j$ is either $\w_1$ or $\w_6$.  Moreover, $\w_1^* = \w_6$, so that $U_j$ is complex, $W_j(\bC) = W_j \ot_\bR \bC = U_j \op U_j^*$, and $W_j(\bC) \simeq W_k(\bC)$, for all $j,k$.  The constraint \eqref{E:mf} that $V_\bC = \op_j\, W_j(\bC)$ be weight multiplicity-free forces $s=1$.  Thus, $V_\bR$ is an irreducible $G_\bR$--module, and $V_\bC = U \op U^*$.  

Assume, without loss of generality, that $U$ is the irreducible $\fe_6$ representation of highest weight
$$
  \w_1 \ = \ \third \left( 4 \s_1 \,+\, 3\s_2 \,+\, 5\s_3 \,+\, 6\s_4  \,+\, 
  4\s_5 \,+\, 2\s_6 \right) \,.
$$
Then, $\tdim\,V_\bC = 2\tdim_\bC U = 54$.  Therefore, \eqref{E:m_dim} and \eqref{E:mmu} imply $53/2 = \w_1(\ttT_\varphi) = \third( 4 n_1 + 3n_2 + 5n_3 + 6n_4  + 
4n_5 + 2n_6)$.  Equivalently, 
$$
  53 \cdot 3 \ = \ 
  2 \cdot (4 n_1 \,+\, 3n_2 \,+\, 5n_3 \,+\, 6n_4  \,+\, 4n_5 \,+\, 2n_6) \,.
$$
This is not possible, as $2$ does not divide $53\cdot 3$.

\subsection*{Proof of Theorem \ref{T:E} for \boldmath$\fg_\bC = \fe_7(\bC)$\unboldmath}

We argue by contradiction.  Suppose a principal Hodge representation exists.    Let $W_1 \op\cdots \op W_s$ be a decomposition of $V_\bR$ into irreducible $G_\bR$--modules.  Let $U_j$ be the irreducible $G_\bC$--module associated to $W_j$, as in \S\ref{S:RCQ}.  By \eqref{E:Umf} and Theorem \ref{T:mf}(e), the highest weight of $U_j$ is $\w_7$.  Therefore, $W_j(\bC) \simeq W_k(\bC)$, for all $j,k$.  The constraint \eqref{E:mf} that $V_\bC = \op_j\,W_j(\bC)$ be weight multiplicity-free forces $s=1$, and $V_\bR$ is irreducible.  Moreover, $\w_7^* = \w_7$, so that $U = U_1$ is either real or quaternionic.  By \eqref{E:RC}, $V_\bC = U$ is real.  

The highest weight of $U$ is 
$$
  \w_7 \ = \ \half \left( 2 \s_1 \,+\, 3\s_2 \,+\, 4\s_3 \,+\, 6\s_4  \,+\, 
  5\s_5 \,+\, 4\s_6  \,+\, 3\s_7 \right) \,.
$$
Since $\tdim_\bC U = 56$, \eqref{E:m_dim} yields
\begin{equation}\label{E:em}
  m \ = \ \half 55 \,.
\end{equation}

The weights of $U$ with $|\lambda| \le 4$ are $\w_7 = (0\cdots0)$, 
\begin{equation} \nonumber
  \w_7-\s_7 \,,\quad \w_7-(\s_6+\s_7) \,,\quad \w_7-(\s_5+\s_6+\s_7) \,,\quad \w_7-(\s_4+\s_5+\s_6+\s_7)\,.
\end{equation}
All other weights are of the form $\w_7 - (\s_4+\s_5+\s_6+\s_7) - a^i\s_i$ with $0 \le a^i \in \bZ$.  So, in order for \eqref{E:ev} to hold, it is necessary that
$$ 
  1 \ = \ n_7\,,\ n_6 \,,\ n_5 \,,\ n_4 \,.
$$ 
Observe that we have $\ttT_\varphi$--eigenvalues
$$
  \{ \lambda(\ttT_\varphi) \ | \ |\lambda| \le 4 \} \ = \ 
  \{ m - p \ | \ p=0,1,2,3,4\} \,.
$$

Next, the weights with $|\lambda|=5,6$ are 
\begin{eqnarray}
  \label{E:E7_5}
  \{ \lambda \in \Lambda(U) \ | \ |\lambda|=5\} & = & 
  \{ (0101111)\,,\ (0011111) \} \,,\\
  \label{E:E7_6}
  \{ \lambda \in \Lambda(U) \ | \ |\lambda|=6\} & = & 
  \{ (0111111)\,,\ (1011111) \} \,.
\end{eqnarray}
Therefore, to obtain the eigenvalue $m-5$, as required by \eqref{E:ev}, the weights \eqref{E:E7_5} force either $n_2=1$ and $n_3 > 1$, or $n_2>1$ or $n_3 = 1$.  Before proceeding to consider these two cases, it will be helpful to note that the remaining weights with $\lambda^1=0$ are 
\begin{eqnarray*}
 \{ \lambda \in \Lambda(U) \ | \ \lambda^1=0\,,\ |\lambda|>6 \} & = &  
 \{ (0112111)\,,\ (0112211)\,,\ (0112221)\,,\ (0112222)\}\,.
\end{eqnarray*}

\subsubsection*{Case 1: $n_2=1$ and $n_3>1$}

Then the $\ttT_\varphi$--eigenvalues associated to weights with $\lambda^1=0$ are
$$
  \{ \lambda(\ttT_\varphi) \ | \ \lambda\in\Lambda(U) \,,\ \lambda^1=0 \} \ = \ 
  \{ m - p \ | \ p=0,1,\ldots,5,\, n_3+4\,,\,n_3+5,\ldots,n_3+9 \} \,.
$$
The largest eigenvalue $\lambda(\ttT_\varphi)$ with $\lambda^1\not=0$ is $(1011111)(\ttT_\varphi) = m-(n_1+n_3+4)$.  All other weights with $\lambda^1\not=0$ are $\lambda \le (1\cdots1)$, and so yield an eigenvalue $\lambda(\ttT_\varphi) \le m - (5+n_1+n_3)$.  Therefore, to realize the eigenvalue $m-6$, it is necessary that $n_3=2$.  This then forces $n_1+6 = (1011111)(\ttT_\varphi) = 12$, so that $n_1=6$.  However, if $\ttT_\varphi = n_i\ttT^i = 6\ttT^1 + \ttT^2 + 2\ttT^3 + \ttT^4 + \ttT^5 + \ttT^6 + \ttT^7$, then $\w_7(\ttT_\varphi) = \half( 2\cdot 6 + 3\cdot1 + 4\cdot2 + 6\cdot1+5\cdot1+4\cdot1+3\cdot1) = \half 41 \not= m$, a contradiction of \eqref{E:mmu} and \eqref{E:em}.  

\subsubsection*{Case 2: $n_2>1$ and $n_3=1$}

In this case, we have 
$$
  \{ \lambda(\ttT_\varphi) \ | \ \lambda\in\Lambda(U) \,,\ \lambda^1=0 \} \ = \ 
  \{ m - p \ | \ p=0,1,\ldots,5,\, n_2+4\,,\,n_2+5,\ldots,n_2+9 \} \,.
$$
The largest eigenvalue $\lambda(\ttT_\varphi)$ with $\lambda^1\not=0$ is $(1011111)(\ttT_\varphi) = m-( n_1+5)$.  All other weights with $\lambda^1\not=0$ are $\lambda \le (1\cdots1)$, and so yield an eigenvalue $\lambda(\ttT_\varphi) \le m - (5+n_1+n_2)$.  Therefore, in order to realize the eigenvalue $m-6$, we must have either $n_1=1$ or $n_2 = 2$.

\subsubsection*{Case 2.a: $n_2>1$, $n_3=1$ and $n_1=1$}

If $n_1=1$, then we must have $n_2=3$ in order to realize the eigenvalue $m-7 = (0101111)(\ttT_\varphi)$.  However, if $\ttT = n_i\ttT^i = \ttT^1 + 3\ttT^2 + \ttT^3 + \ttT^4 + \ttT^5 + \ttT^6 + \ttT^7$, then $\w_7(\ttT_\varphi) = \half( 2\cdot 1 + 3\cdot3 + 4\cdot1 + 6\cdot1+5\cdot1+4\cdot1+3\cdot1) = \half 33 \not= m$, a contradiction of \eqref{E:mmu} and \eqref{E:em}.

\subsubsection*{Case 2.b: $n_2=2$ and $n_3=1$}

If $n_2=2$, then in order to avoid multiplicity, as required by \eqref{E:Tmf}, and realize the eigenvalue $m-12$ we must have $n_1 = 7$.  In this case, $\ttT = n_i\ttT^i = 7\ttT^1 + 2\ttT^2 + \ttT^3 + \ttT^4 + \ttT^5 + \ttT^6 + \ttT^7$, so that $\w_7(\ttT_\varphi) = \half( 2\cdot 7 + 3\cdot2 + 4\cdot1 + 6\cdot1+5\cdot1+4\cdot1+3\cdot1) = \half 42 \not= m$.  Again, this contradicts \eqref{E:mmu} and \eqref{E:em}.  

\subsection*{Proof of Theorem \ref{T:G2}}

Suppose $(V,\varphi)$ is a principal Hodge representation of $G$.  Let $W_1 \op\cdots \op W_s$ be a decomposition of $V_\bR$ into irreducible $G_\bR$--modules.  Let $U_j$ be the irreducible $G_\bC$--module associated to $W_j$, as in \S\ref{S:RCQ}.  By \eqref{E:Umf} and Theorem \ref{T:mf}(e), the highest weight of $U_j$ is $\w_1$.  Therefore, $W_j(\bC) \simeq W_k(\bC)$, for all $j,k$.  The constraint \eqref{E:mf} that $V_\bC = \op_j \, W_j(\bC)$ be weight multiplicity-free forces $s=1$.  Therefore, $V_\bR$ is an irreducible $G_\bR$--module.  Moreover, $\w_1^* = \w_1$, so that $U = U_1$ is either real or quaternionic.  By \eqref{E:RC}, $U$ must be real.  Thus, $V_\bC = U = \bC^7$.  

The highest weight is 
$$
  \w_1 \ = \ 2\s_1 \ + \ \s_2 \,.
$$
Since $U$ is self-dual, it is real or quaternionic, cf. \S\ref{S:RCQ}.  If $V$ is principal, then \eqref{E:RC} implies $U$ is real and $V_\bC = U$.  So the weights of $V_\bC$ are 
\begin{eqnarray*}
  \Lambda(U) & = & \{ \w_1 \,,\, \w_1 - \s_1 \,,\, \w_1 - (\s_1+\s_2) \,,\,  
  \w_1 - (2\s_1+\s_2) \,,\\ 
  & & \quad \w_1 - (3\s_1+\s_2) \,,\, \w_1 - (3\s_1+2\s_2) \,,\,
  \w_1 - (4\s_1+2\s_2) \} \,.
\end{eqnarray*}
In particular, the weights include $\w_1 \,,\, \w_1 - \s_1 \,,\, \w_1-(\s_1+\s_2)$, and all other weights are of the form $\w_1 - (a\s_1 + b\s_2)$ with $0 < a,b\in\bZ$ and $a+b >2$.  Given \eqref{E:ev}, this forces $\ttT_\varphi = \ttT^1 + \ttT^2$, which yields a decomposition $V_\bC = V_3 \op V_2 \op V_1 \op V_0 \op V_{-1} \op V_{-2} \op V_{-3}$ with all Hodge numbers equal to one.

Finally, we note that \eqref{E:Tcpt} yields $\ttT^\tcpt = 0$.  So $\m(\ttT^\tcpt) = 0$, and $U$ is real by \eqref{E:RQtest}.

\section{Special linear Hodge groups} \label{S:A}

Assume throughout this section that 
\begin{center}
   \emph{$G$ is a $\bQ$--algebraic group with Lie algebra 
   $\fg_\bC \simeq \fsl_{r+1}\bC$.}
\end{center} 

The fundamental weights of $\fg_\bC = \fsl_{r+1}\bC$ are 
\begin{equation} \label{E:Afw}
\renewcommand{\arraystretch}{1.6}\begin{array}{rcl}
  \w_k & = & \tfrac{r+1-k}{r+1} \,
  \big[ \s_1 + 2\s_2 + \cdots + (k-1) \s_{k-1} \big] \,+\,
  \tfrac{k(r+1-k)}{r+1} \, \s_k \\
  & & + \
  \tfrac{k}{r+1} \,
  \big[ (r-k)\s_{k+1} + (r-k-1)\s_{k+2} + \cdots + \s_r \big] \,,
\end{array}\end{equation}
$1 \le k \le r$.  If $U$ is the irreducible $\fg_\bC$--module of highest weight $\w_k$, then the highest weight of the dual $U^*$ is
\begin{equation} \label{E:Adual}
  \w_k^* \ = \ \w_{r+1-k} \,.
\end{equation}
In particular, in the notation of Lemma \ref{L:iv}, $k^* = r+1 - k$.
\subsection{Restrictions on principal Hodge representations} \label{S:Arest}

In this section we describe some simple numerical constraints on principal Hodge representations.  Among those are the restriction that the rank of $G$ be odd.  The cases of rank $r=1,3,5$ are addressed in Propositions \ref{P:rank1}, \ref{P:rank3} and \ref{P:rank5}; a examples of rank seven and nine are considered in \S\ref{S:Aeg}.

By \eqref{E:Afw}, 
\begin{equation} \label{E:r+1}
  \w_k(\ttT_\varphi) 
  \ \in \ \frac{1}{r+1}\bZ \,.
\end{equation}

\begin{lemma} \label{L:r+1}
Suppose that $G$ admits a principal Hodge representation.  Then $r+1$ is even.
\end{lemma}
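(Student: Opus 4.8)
The plan is to argue by contradiction. I assume $G$ admits a principal Hodge representation $(V,\varphi)$ but that $r+1$ is \emph{odd} (so necessarily $r \ge 2$), and I will play the integrality constraint \eqref{E:r+1}, which forces $\w_k(\ttT_\varphi) \in \tfrac{1}{r+1}\bZ$, against the dimension formula \eqref{E:m_dim}. Concretely, by \eqref{E:mmu} the top eigenvalue is $m = \m(\ttT_\varphi)$, where $\m$ is the highest weight of the dominant irreducible constituent of $V_\bC$; since that constituent is weight multiplicity-free (by \eqref{E:Umf} and Theorem \ref{T:mf}(a)), its highest weight is $\w_k$, $a\w_1$, or $a\w_r$, and hence $m \in \tfrac{1}{r+1}\bZ$ by \eqref{E:r+1}. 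Combined with $m = \tfrac12(\tdim_\bC V_\bC - 1)$, the whole argument reduces to showing that $N \dfn \tdim_\bC V_\bC$ is \emph{even}: for then $(r+1)m$ is an integer, so $(r+1)(N-1) = 2(r+1)m$ is even, and since $N-1$ is odd this forces $r+1$ to be even — the desired contradiction.

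To show $N$ is even I would decompose $V_\bR = W_1 \op \cdots \op W_s$ into irreducible $G_\bR$-modules and attach to each the irreducible $\fg_\bC$-module $U_j$ of \S\ref{S:RCQ}; by \eqref{E:Umf} every $U_j$ is weight multiplicity-free, so Theorem \ref{T:mf}(a) presents its highest weight as $\w_k$ $(1 \le k \le r)$, $a\w_1$, or $a\w_r$. The key point is that \emph{none} of these modules is self-dual when $r+1$ is odd and $r \ge 2$: by \eqref{E:Adual} we have $\w_k^{*} = \w_{r+1-k}$, so $\w_k = \w_k^{*}$ would require $r+1 = 2k$, while $a\w_1 = (a\w_1)^{*} = a\w_r$ would require $r=1$. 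Thus each $U_j$ is complex, so $V_\bC$ contains $W_j(\bC) = U_j \op U_j^{*}$ (cf. \S\ref{S:RCQ}, consistent with \eqref{E:RC}), a module of even dimension $2\,\tdim_\bC U_j$. Summing over $j$ gives that $N$ is even, completing the reduction and hence the proof.

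I expect the substantive step to be the self-duality bookkeeping of the second paragraph: once $V_\bC$ is known to be a sum of dual pairs $U_j \op U_j^{*}$, the parity clash with \eqref{E:r+1} is immediate arithmetic. The subtlety to watch is that the quaternionic possibility must be excluded as well as the real one; both require $U_j \simeq U_j^{*}$, which is exactly what the computation with \eqref{E:Adual} rules out, so no irreducible constituent of $V_\bC$ can be self-dual and every constituent contributes an even number of dimensions. As in the proofs of Theorems \ref{T:E} and \ref{T:G2}, I take each $U_j$ nontrivial so that Theorem \ref{T:mf}(a) applies to it.
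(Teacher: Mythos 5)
Your proof is correct and is essentially the paper's own argument, merely reorganized as a contradiction: the paper splits into the case where some $U_j$ is self-dual (forcing $r+1$ even via \eqref{E:Adual}) and the case where none is (forcing $\tdim_\bC V_\bC$ even and then the parity clash between $m=\tfrac12(\tdim_\bC V_\bC-1)$ and $m\in\tfrac{1}{r+1}\bZ$), which are exactly your two steps run in contrapositive form. Your explicit caveat about taking each $U_j$ nontrivial matches the paper's implicit convention when it applies Theorem \ref{T:mf}(a) to the constituents, so there is no gap relative to the paper's proof.
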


\begin{proof}[Proof of Lemma \ref{L:r+1}]
Let $V_\bR = W_1 \op\cdots\op W_s$ be a decomposition into irreducible $G_\bR$--modules.  Let $U_j$ be the irreducible $G_\bC$--module associated to $W_j$ as in \S\ref{S:RCQ}.  If one of the $U_j$ is self-dual, then $r+1$ is necessarily even by \eqref{E:Adual}.   

Assume none of the $U_j$ are self-dual.  Then, by \S\ref{S:RCQ}, $W_j(\bC) = U_j \op U_j^*$, for all $1 \le j \le s$.  In particular, each $W_j(\bC)$ is of even dimension.  It follows that $V_\bC = \op_j\, W_j(\bC)$ is of even dimension.  So \eqref{E:m_dim} implies
\begin{equation} \label{E:L1}
  m \ = \ \frac{2a-1}{2}
\end{equation}
for some $0 < a \in \bZ$.  

Let $\m_j$ denote the highest weight of $U_j$.  By \eqref{E:mmu}, there exists $1 \le j \le s$ such that the largest eigenvalue $m = \m_j(\ttT_\varphi)$.  By \eqref{E:mf}, the representation $U_j$ is weight multiplicity-free.  By Theorem \ref{T:mf}(a), the weight $\m_j$ is necessarily of the form $p \w_k$ for some $p$ and $k$.  From \eqref{E:mmu} and \eqref{E:r+1}, we see that 
\begin{equation} \label{E:L2}
  m \ = \ \frac{b}{r+1}
\end{equation}
for some $0 < b \in \bZ$.  Together \eqref{E:L1} and \eqref{E:L2} yield $(r+1)(2a-1) = 2b$, implying $2$ divides $r+1$.
\end{proof}

\begin{lemma}\label{L:Ar_pw1}
Suppose that $r > 1$ and $G$ admits a principal Hodge representation $(V,\varphi)$ with $V_\bC = \tSym^p(\bC^{r+1}) \op \tSym^p(\bC^{r+1})^*$.  Then $(r+1)! \equiv 0$ mod $2p$, and $2p \not\equiv0$ mod $r+1$.
\end{lemma}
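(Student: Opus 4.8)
The plan is to convert the hypotheses into two arithmetic facts about $\ttT_\varphi$ and read off the two congruences separately; the first will be a divisibility coming from the dimension of $V_\bC$, and the second a weight-theoretic obstruction.

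First I would record the basic data. Since $r>1$ the fundamental weights satisfy $\w_1\neq\w_r=\w_1^*$ (by \eqref{E:Adual}), so $U=\tSym^p(\bC^{r+1})$ is complex, $V_\bC=U\op U^*$ with $V_\bR$ irreducible, and the machinery of \S\ref{S:PHR}--\S\ref{S:wts} applies; in particular $U$ is weight multiplicity-free (Theorem \ref{T:mf}(a)) and \eqref{E:cpx} holds. Writing $L_1,\dots,L_{r+1}$ for the weights of $\bC^{r+1}$ (so $\sum_iL_i=0$, $\s_i=L_i-L_{i+1}$, and $\w_1=L_1$), the highest weight of $U$ is $\m=pL_1$ and $\tdim_\bC U=\binom{r+p}{p}$. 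Hence $\tdim_\bC V_\bC=2\binom{r+p}{p}$, and (after possibly interchanging the two summands) \eqref{E:m_dim} and \eqref{E:mmu} give
$$ m=\binom{r+p}{p}-\tfrac12=\m(\ttT_\varphi)=p\,\w_1(\ttT_\varphi). $$

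For the first congruence I would use $\w_1(\ttT_\varphi)\in\tfrac1{r+1}\bZ$ (by \eqref{E:r+1}). Multiplying the displayed identity by $2(r+1)$ gives $(r+1)\bigl(2\binom{r+p}{p}-1\bigr)=2p\cdot(r+1)\w_1(\ttT_\varphi)\in 2p\,\bZ$, so
$$ 2p \ \big|\ (r+1)\Bigl(2\tbinom{r+p}{p}-1\Bigr). $$
It remains to pass from this to $2p\mid(r+1)!$, which I would do prime by prime. For $\ell=2$ the factor $2\binom{r+p}{p}-1$ is odd, so the divisibility forces $v_2(r+1)\ge v_2(2p)$ and hence $v_2((r+1)!)\ge v_2(r+1)\ge v_2(2p)$. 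For an odd prime $\ell$ the goal is $v_\ell(p)\le v_\ell((r+1)!)$: when $\ell\nmid\binom{r+p}{p}$ the identity $p\binom{r+p}{p}=(r+1)\binom{r+p}{r+1}$ gives $v_\ell(p)=v_\ell(r+1)+v_\ell\binom{r+p}{r+1}$, so the goal becomes $v_\ell\binom{r+p}{r+1}\le v_\ell(r!)$, while if $\ell\mid\binom{r+p}{p}$ then $v_\ell(2\binom{r+p}{p}-1)=0$ and the claim is immediate. I expect this odd-prime estimate to be the main obstacle: since $2p\mid(r+1)\bigl(2\binom{r+p}{p}-1\bigr)$ is strictly stronger than the desired $2p\mid(r+1)!$ (they already differ at, e.g., $r=5,\ p=4$), the deduction must genuinely exploit the exact divisibility, amounting to an upper bound on $v_\ell\bigl(2\binom{r+p}{p}-1\bigr)$ read off from the base-$\ell$ carry pattern of $r+p$ (Kummer) together with Legendre's formula.

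The second congruence is cleaner and purely representation-theoretic. Suppose, for contradiction, that $(r+1)\mid 2p$. Since a principal representation exists, $r+1$ is even by Lemma \ref{L:r+1}; write $r+1=2s$, so $(r+1)\mid 2p$ forces $s\mid p$. Put $p_0=p/s\in\bZ_{\ge1}$ and consider
$$ \lambda_*\ =\ p_0\,(L_1+\cdots+L_s). $$
Its coefficients are nonnegative and sum to $sp_0=p$, so $\lambda_*\in\Lambda(U)$; likewise $p_0(L_{s+1}+\cdots+L_{r+1})\in\Lambda(U)$. But $\sum_iL_i=0$ gives $p_0(L_{s+1}+\cdots+L_{r+1})=-\lambda_*$, and $\lambda_*=p_0\w_s\neq0$. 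Thus $\lambda_*$ and $-\lambda_*$ are both weights of the complex module $U$, contradicting \eqref{E:cpx}. Hence $(r+1)\nmid 2p$, which is the assertion $2p\not\equiv0 \bmod (r+1)$.
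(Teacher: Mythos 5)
Your proof of the first congruence is incomplete, and the gap is genuine. You correctly derive the paper's own starting point, $2p \mid (r+1)\bigl(2\binom{r+p}{p}-1\bigr)$, and you settle the prime $2$; but for odd primes $\ell$, in the case $\ell \nmid \binom{r+p}{p}$, you reduce to the inequality $v_\ell\binom{r+p}{r+1}\le v_\ell(r!)$ and then defer its proof to ``an upper bound on $v_\ell\bigl(2\binom{r+p}{p}-1\bigr)$ read off from the base-$\ell$ carry pattern \dots (Kummer) together with Legendre's formula.'' No such tool exists: Kummer's theorem computes $v_\ell\binom{r+p}{p}$, and the valuation of $2X-1$ is in no way determined by the valuation or carry pattern of $X$ (it depends on $X$ modulo powers of $\ell$, which Kummer does not see). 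So the odd-prime case is left open, as you yourself flag. The missing idea is far more elementary and is exactly how the paper closes the argument: since $p+i\equiv i \pmod p$, one has $(p+1)(p+2)\cdots(p+r)\equiv r!\pmod p$. Multiply your divisibility by $r!$ and use $\binom{r+p}{p}\,r!=(p+1)\cdots(p+r)=r!+pk$ to get
$$
  (r+1)\Bigl(2\binom{r+p}{p}-1\Bigr)\,r! \ = \ (r+1)\bigl(r!+2pk\bigr)
  \ = \ (r+1)! \,+\, 2pk\,(r+1) \ \equiv \ (r+1)! \pmod{2p}\,,
$$
so $2p \mid (r+1)\bigl(2\binom{r+p}{p}-1\bigr)$ immediately yields $2p\mid (r+1)!$ --- globally, with no prime-by-prime analysis, no Kummer, and no Legendre. (Your observation that the hypothesis is strictly stronger than the conclusion, e.g.\ at $(r,p)=(5,4)$, is accurate, but it is a meta-remark and does not substitute for this step.)

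Your proof of the second congruence is correct, and is essentially the paper's argument organized slightly better: the paper splits into the cases $2p=2t(r+1)$ and $2p=(2t+1)(r+1)$, exhibiting $0\in\Lambda(U)$ in the first and a pair $\pm\lambda\in\Lambda(U)$ in the second, whereas your single observation --- that $r+1=2s$ (Lemma \ref{L:r+1}) and $(r+1)\mid 2p$ force $s\mid p$, so that $\lambda_*=\tfrac{p}{s}(L_1+\cdots+L_s)$ and $-\lambda_*=\tfrac{p}{s}(L_{s+1}+\cdots+L_{r+1})$ are both weights of $U$ --- handles both cases at once; the contradiction with \eqref{E:cpx} is the same, and your verification that $U$ is complex (using $r>1$ and \eqref{E:Adual}) is exactly what is needed for \eqref{E:cpx} to apply.
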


\begin{proof}[Proof of Lemma \ref{L:Ar_pw1}, Part 1]
First, we prove that $(r+1)!\in 2p\,\bZ$.  Note that 
$$
  \tdim_\bC\,\tSym^p(\bC^{r+1}) \ = \ \binom{p+r}{r} \ = \ 
  \frac{(p+r)(p+r-1)\cdots(p+1)}{r!} \,.
$$
The highest weight of $\tSym^p(\bC^{r+1})$ is $\m = p\,\w_1$.  By \eqref{E:m_dim}, \eqref{E:mmu} and \eqref{E:r+1}, 
\begin{eqnarray*}
  \frac{1}{2}\,\left[ 2\, \frac{(p+r)(p+r-1)\cdots(p+1)}{r!} \,-\, 1 \right]
  & \in & \frac{p}{r+1}\,\bZ \,.
\end{eqnarray*}
Equivalently,
\begin{eqnarray*}
 \half (r+1)\,\big[ 2\,(p+r)(p+r-1)\cdots(p+1) \,-\, r! \big]
  & \in & p\,r!\,\bZ \,.
\end{eqnarray*}
Note that $(p+r)(p+r-1)\cdots(p+1) \equiv r!$ mod $p$.  In particular, there exists $q\in\bZ$ such that 
\begin{eqnarray*}
 \half (r+1)\,\big( 2\,r! \,+\, pq \,-\, r! \big) \ = \ 
 \half (r+1)! \,+\, \half(r+1)pq 
  & \in & p\,r!\,\bZ \,.
\end{eqnarray*}
so that $(r+1)!\in 2p\,\bZ$.
\end{proof}

\begin{proof}[Proof of Lemma \ref{L:Ar_pw1}, Part 2]
Now we prove that $2p \not\equiv0$ modulo $r+1$.  The weights of $\bC^{r+1}$ and $U = \tSym^p(\bC^{r+1})$, as a $\fg_\bC$--modules, are 
\begin{eqnarray}
  \nonumber
  \Lambda(\bC^{r+1}) & = &  \left\{
    \nu_i \ \dfn \ \w_1 - (\s_1 + \cdots + \s_{i-1}) \ | \ 1 \le i \le r+1
  \right\} \,,\\
  \label{E:Ar_symwts}
  \Lambda(U) & = & \{ \nu_{i_1} + \cdots + \nu_{i_p} \ | 
  \ 1 \le i_1 \le\cdots \le i_p \le r+1 \} \,.
\end{eqnarray}
Note that 
\begin{equation} \label{E:lsum0}
  \nu_1 \,+\cdots+\, \nu_{r+1} \ = \ 0 \, .
\end{equation} 
By Lemma \ref{L:r+1}, $r+1 = 2s$, for some $s \in \bZ$.  Then \eqref{E:lsum0} implies
\begin{equation} \label{E:lsum-}
  \nu_1 \,+\cdots+\,\nu_s \ = \ -(\nu_{s+1} \,+\cdots+\, \nu_{r+1}) \, .
\end{equation}   

Arguing by contradiction, suppose that $r+1$ divides $2p$.  We will consider two cases: $2p = 2t \,(r+1)$ and $2p = (2t+1)\cdot(r+1)$, for some $t \in \bZ$.
\begin{bcirclist}
\item First, suppose that $2p = 2t\,(r+1)$.  Then $p = t(r+1)$, and \eqref{E:Ar_symwts} and \eqref{E:lsum0} imply $0 = t( \nu_1 + \cdots + \nu_{r+1} ) \in \Lambda(U)$.  This contradicts \eqref{E:cpx}.  
\item
Next, suppose that $2p = (2t+1)\cdot(r+1)$, so that $p = (2t+1)\,s$.  Then \eqref{E:Ar_symwts}, \eqref{E:lsum0} and \eqref{E:lsum-} imply that both
\begin{eqnarray*}
  \nu_1 + \cdots + \nu_s & = & (\nu_1+\cdots+\nu_s) \ + \ 
  t\,( \nu_1 + \cdots + \nu_{r+1}) \,, \quad\hbox{and} \\
  -(\nu_1 + \cdots + \nu_s) & = & (\nu_{s+1}+\cdots+\nu_{r+1}) \\
  & = & (\nu_{s+1}+\cdots+\nu_{r+1}) \ + \ 
  \, t\,( \nu_1 + \cdots + \nu_{r+1}) \,  
\end{eqnarray*}
are weights of $U$.  Again, this contradicts \eqref{E:cpx}.
\end{bcirclist}
\end{proof}

\begin{lemma} \label{L:Ar_w2k}
Suppose that $1 \le k < \half(r+1)$ and that $2k \not= \half(r+1)$.  Assume
$G$ admits a principal Hodge representation $(V,\varphi)$ with $V_\bR$ irreducible and $V_\bC= \tw^{2k}(\bC^{r+1}) \op \tw^{2k}(\bC^{r+1})^*$.  Then $r+1 \equiv 0$ mod $4$.
\end{lemma}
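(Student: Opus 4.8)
The plan is to derive the divisibility $4 \mid r+1$ from an arithmetic clash between the exact value of the top $\ttT_\varphi$--eigenvalue and a refined integrality property of the relevant fundamental weight.

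First I would fix the structural input. Since $2k \neq \half(r+1)$, equation \eqref{E:Adual} gives $\w_{2k}^* = \w_{r+1-2k} \neq \w_{2k}$, so $U = \tw^{2k}\bC^{r+1}$ is not self-dual; hence $U$ is complex (\S\ref{S:RCQ}), and as $V_\bR$ is irreducible we have $V_\bC = U \op U^*$ with $\tdim_\bC V_\bC = 2\binom{r+1}{2k}$. By Lemma \ref{L:r+1} we may write $r+1 = 2s$, and the goal reduces to showing that $s$ is even.

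Second, I would determine the largest eigenvalue and constrain its denominator. On one hand, \eqref{E:m_dim} and \eqref{E:mmu} give
\[
  m \ = \ \m(\ttT_\varphi) \ = \ \half\big(\tdim_\bC V_\bC - 1\big)
  \ = \ \binom{r+1}{2k} - \half,
\]
a half-odd-integer, where (after the normalization \eqref{E:m*<m}) $\m$ is the highest weight of the summand of $V_\bC$ achieving the maximum, so $\m = \w_\ell$ with $\ell \in \{2k,\,r+1-2k\}$ even. On the other hand, I claim $\m(\ttT_\varphi) \in \tfrac1s\bZ$. This is the crux of the argument: although \eqref{E:r+1} only yields $\m(\ttT_\varphi)\in\frac{1}{r+1}\bZ$, inspection of \eqref{E:Afw} shows that the coefficient of each simple root $\s_j$ in $\w_\ell$ has a numerator divisible by $\ell$ (for $j \ge \ell$) or by $r+1-\ell$ (for $j \le \ell$). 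For $\ell = 2k$, and equally for $\ell = r+1-2k = 2(s-k)$, both $\ell$ and $r+1-\ell$ are even, so every coefficient lies in $\frac{2}{r+1}\bZ$; evaluating against the integers $n_j = \s_j(\ttT_\varphi)$ then gives $\m(\ttT_\varphi)\in\frac{2}{r+1}\bZ = \frac1s\bZ$. I expect recognizing this factor-of-two improvement---forced precisely by the evenness of the exterior degree---to be the only real content, the remaining steps being bookkeeping.

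Finally, combining the two facts gives $\binom{2s}{2k} - \half \in \frac1s\bZ$; multiplying by $s$ forces $\frac s2 \in \bZ$, so $s$ is even and $r+1 = 2s \equiv 0$ modulo $4$, as required. Note that this argument uses the hypothesis $2k \neq \half(r+1)$ only to guarantee that $U$ is complex (hence the factor of $2$ in $\tdim_\bC V_\bC$, which produces the half-integer value of $m$), and $1 \le k < \half(r+1)$ only to ensure $\w_{2k}$ is a genuine fundamental weight.
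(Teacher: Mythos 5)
Your proposal is correct and is essentially the paper's own argument: both rest on the observation that, with $r+1=2s$, every coefficient in \eqref{E:Afw} for $\w_{2k}$ (or $\w_{r+1-2k}$) lies in $\tfrac{1}{s}\bZ$, which clashes with the half-odd-integer value of $m$ forced by \eqref{E:m_dim} and \eqref{E:mmu} unless $s$ is even. Your treatment is in fact slightly more careful than the paper's, since you explicitly handle the possibility that the maximal eigenvalue is attained by $\w_{r+1-2k}$ rather than $\w_{2k}$, noting that both indices are even so the same integrality holds.
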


\begin{remark*}
From \eqref{E:Adual}, we see that the condition $2k \not=\half(r+1)$ implies $U = \tw^{2k}(\bC^{r+1})$ is not self-dual.  By \S\ref{S:RCQ}, the representation $V_\bC = U \op U^*$ is complex.
\end{remark*}

\begin{proof}
We argue by contradiction.  Let $\ttT_\varphi = n_i\ttT^i$ be the grading element associated to the Hodge representation.  We have $\tdim_\bC V_\bC = 2a$ for $a = \tdim_\bC U$.  By Lemma \ref{L:r+1}, $r + 1 = 2s$ for some $s \in \bZ$.  By \eqref{E:m_dim} and \eqref{E:mmu}, 
\begin{eqnarray*}
  \half (2a-1) & = &  \w_{2k}(\ttT_\varphi) \\
  & \stackrel{\eqref{E:Afw}}{=} &  \frac{(s-k)}{s} \,
  \left( n_1 \,+\, 2 n_2 \,+\cdots+\, (2k-1)n_{2k-1} \right) \
  + \ \frac{2k\,(s-k)}{s} \, n_{2k} \\\ 
  & &  + \ 
  \frac{k}{s} \, \big(
    (r-2k)\,n_{2k+1} \ + \ (r-2k-1)\,n_{2k+2} \ + \cdots + \ n_r
  \big) \, .
\end{eqnarray*}
Multiplying through by $2s$ yields $(2a-1)s \in 2\,\bZ$.  Thus $r+1 = 2s \equiv 0$ mod 4.
\end{proof}

\subsection{The rank one case} \label{S:rank1}

The irreducible representations of $\fsl_2\bC$ are $\tSym^p\bC^2$; they are of highest weight $\m = p \w_1 = \half p \s_1$.

\begin{proposition} \label{P:rank1}
Let $G$ be a Hodge group with complex Lie algebra $\fg_\bC = \fsl_2\bC$.  Assume that $(V,\varphi)$ is a Hodge representation with the property that $V_\bC = \tSym^p\bC^2$, and satisfying the normalization \eqref{E:n>=0}.  Then $V$ is principal if and only if $\ttT_\varphi = \ttT_1$.
\end{proposition}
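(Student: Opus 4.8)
The plan is to exploit the fact that $\fsl_2\bC$ has rank one, so the grading element is pinned down by a single integer. Write $\ttT_\varphi = n_1\ttT^1$, where $n_1 = \s_1(\ttT_\varphi) \ge 0$ by the normalization \eqref{E:n>=0}; since $\varphi$ is nonconstant and $\pi_*$ is injective, we have $p \ge 1$. For $\fsl_2\bC$ the formula \eqref{E:Afw} gives $\w_1 = \half\s_1$, so the highest weight of $U = \tSym^p\bC^2$ is $\m = p\,\w_1 = \half p\,\s_1$, and the weights of $U$ form a single unbroken string $\{\,\m - j\s_1 \ | \ 0 \le j \le p\,\}$, each of multiplicity one (consistent with Theorem \ref{T:mf}(a)). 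This is the only structural input I need.

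Next I would compute the $\ttT_\varphi$--eigenvalues directly. Since $(\m - j\s_1)(\ttT_\varphi) = n_1\big(\half p - j\big)$, the eigenvalues of $V_\bC = U$ form the arithmetic progression $\half p\, n_1,\ (\half p - 1)n_1,\ \ldots,\ -\half p\, n_1$ with common difference $n_1$; in particular they are automatically multiplicity-free, and the largest is $m = \m(\ttT_\varphi) = \half p\, n_1$, in agreement with \eqref{E:mmu}. For the forward implication, suppose $(V,\varphi)$ is principal. Then \eqref{E:m_dim} gives $m = \half(\tdim_\bC V - 1) = \half p$, while the eigenvalue computation gives $m = \half p\, n_1$; comparing and using $p \ge 1$ forces $n_1 = 1$, i.e. $\ttT_\varphi = \ttT^1$. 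Conversely, if $n_1 = 1$ then the eigenvalues become $\half p,\ \half p - 1,\ \ldots,\ -\half p$, which is precisely the consecutive, multiplicity-free set \eqref{E:ev} with $m = \half p = \half(\tdim_\bC V - 1)$; hence $\bh = (1,\ldots,1)$ and $(V,\varphi)$ is principal.

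The argument is essentially a one-line computation, so I do not anticipate a genuine obstacle; the only points requiring care are the normalization $\w_1 = \half\s_1$ and the observation that, because $p \ge 1$, the relation $\half p\, n_1 = \half p$ pins down $n_1$ uniquely. For consistency with the standing hypothesis $V_\bC = \tSym^p\bC^2$ (that is, $U$ real rather than quaternionic), I would close by noting that $n_1 = 1$ is odd, so \eqref{E:Tcpt} gives $\ttT^\tcpt = 0$ and hence $\m(\ttT^\tcpt) = 0$ is even; by \eqref{E:RQtest} the self-dual $U$ is indeed real, matching the other proofs in the paper.
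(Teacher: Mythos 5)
Your proposal is correct and follows essentially the same route as the paper: compute the weight string of $\tSym^p\bC^2$, observe the $\ttT_\varphi$--eigenvalues form an arithmetic progression with common difference $n_1$, force $n_1=1$ from the principal conditions, and verify realness via $\ttT^\tcpt=0$ and \eqref{E:RQtest}. The only (harmless) difference is that you pin down $n_1=1$ by comparing $m = \half p\,n_1$ with the dimension count \eqref{E:m_dim}, whereas the paper cites Lemma \ref{L:iv}(a), which encodes the same consecutiveness requirement.
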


\begin{proof}
Let $U$ be the irreducible $G_\bC$--module associated to $V_\bR$ by \S\ref{S:RCQ}.  Let $\m = p \w_1$ be the highest weight of $U$.  By Theorem \ref{T:mf}(a), $U = \tSym^p\bC^2$ is multiplicity-free as required by \eqref{E:Umf}.

Since $U$ is self-dual, it is either real or quaternionic, cf. \S\ref{S:RCQ}.  By \eqref{E:RC}, if $V$ is principal, then $U$ is real; therefore, $V_\bC = U$.
By Lemma \ref{L:iv}(a), $n_1=1$; so the grading element is necessarily of the form $\ttT_\varphi = \ttT^1$.  

The weights of $V_\bC$ are $\Lambda(V_\bC) = \{ \half\,(p-i)\,\s_1 \ | \ 0 \le i \le p \}$.  In particular, \eqref{E:ev} holds.  To conclude that the Hodge representation is principal, it remains to confirm that $V_\bC$ is a real (rather than quaternionic) $G_\bR$--module: definition \eqref{E:Tcpt} yields $\ttT^\tcpt = 0$, so that $\m(\ttT^\tcpt) = 0$ is even, as required by \eqref{E:RQtest} and \eqref{E:RC}.
\end{proof}

\subsection{The rank three case} \label{S:rank3}

\begin{proposition} \label{P:rank3}
Let $G$ be a Hodge group with complex Lie algebra $\fg_\bC = \fsl_4\bC$.  Assume that $(V,\varphi)$ is a Hodge representation with the property that $V_\bR$ is an irreducible $G_\bR$--module.  Let $\ttT_\varphi$ be the associated grading element \emph{(\S\ref{S:GE})}, and assume the normalization \eqref{E:n>=0} holds.  Then the Hodge representation is principal if and only if $V_\bC = \bC^4 \op (\bC^4)^*$ and $\ttT_\varphi$ is one of
$$
  3\ttT^1 \,+\, 2 \ttT^2 \,+\, \ttT^1 \quad\hbox{ or }\quad
  \ttT^1 \,+\, 2\ttT^2 \,+\, 3 \ttT^3 \,.
$$
\end{proposition}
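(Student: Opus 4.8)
The plan is to enumerate the weight multiplicity-free $\fsl_4\bC$--modules and eliminate every candidate except the standard representation. Since $V_\bR$ is irreducible, \S\ref{S:RCQ} attaches to it an irreducible $\fg_\bC$--module $U$, and if $(V,\varphi)$ is principal then $U$ is weight multiplicity-free by \eqref{E:Umf}. By Theorem \ref{T:mf}(a) with $r+1=4$, the options for $U$ are $\tw^k\bC^4$ ($k=1,2,3$), $\tSym^a\bC^4$, and $\tSym^a(\bC^4)^*$. The duality \eqref{E:Adual}, namely $\w_k^* = \w_{4-k}$, shows that $\bC^4 = \tw^1\bC^4$ and every $\tSym^a\bC^4$ ($a\ge1$) are complex (with duals $\tw^3\bC^4$ and $\tSym^a(\bC^4)^*$), while $\tw^2\bC^4$ is self-dual. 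Thus, up to replacing $U$ by $U^*$, there are exactly three cases to treat: $U=\bC^4$, $U=\tw^2\bC^4$, and $U=\tSym^a\bC^4$ with $a\ge2$.

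First I would dispose of the two non-standard cases. For $U=\tw^2\bC^4$ (self-dual, $\dim_\bC U = 6$), condition \eqref{E:RC} forces $U$ real in the principal case, so $V_\bC=U$ and $m=\tfrac52$. Writing the six weights in the $(\lambda^1\lambda^2\lambda^3)$--notation of \S\ref{S:wts} and imposing \eqref{E:n>=0}, \eqref{E:m_dim} and \eqref{E:ev} gives $n_1+2n_2+n_3=5$, whose only solutions producing the consecutive, multiplicity-free spectrum \eqref{E:ev} are $\ttT_\varphi=\ttT^1+\ttT^2+2\ttT^3$ and $\ttT_\varphi=2\ttT^1+\ttT^2+\ttT^3$. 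For each of these \eqref{E:Tcpt} yields $\m(\ttT^\tcpt)=1$, so \eqref{E:RQtest} makes $U$ quaternionic, contradicting \eqref{E:RC}; hence $\tw^2\bC^4$ contributes nothing. For $U=\tSym^a\bC^4$ with $a\ge2$, Lemma \ref{L:Ar_pw1} (with $p=a$, $r=3$) requires $24\equiv0$ mod $2a$ and $2a\not\equiv0$ mod $4$, i.e. $a\mid 12$ and $a$ odd, leaving only $a=3$. To kill $a=3$ I would use Lemma \ref{L:iv}(b): here $\m=3\w_1$ and $\m^*=3\w_3$, and \eqref{E:Afw} gives $m-m^*=\tfrac32(n_1-n_3)$. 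As $U$ is complex this must be a positive integer, so $n_1-n_3$ is a positive even number and $m-m^*\ge3$; but then $n_1>n_3\ge1$ gives $n_1>1$, whereupon Lemma \ref{L:iv}(b) would force $m^*=m-1$, a contradiction.

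This leaves $U=\bC^4$, complex, with $V_\bC=\bC^4\op(\bC^4)^*$, $\dim_\bC V_\bC=8$ and $m=\tfrac72$. The eigenvalues of $U$ are $m-d_j$, where $d_0=0<d_1<d_2<d_3$ are the partial sums $n_1,\,n_1+n_2,\,n_1+n_2+n_3$ (strictly increasing by \eqref{E:n>0}); since $\Lambda((\bC^4)^*)=-\Lambda(\bC^4)$, the eigenvalues of $U^*$ are their negatives, i.e. $m-(7-d_j)$ because $2m=7$. The principal condition \eqref{E:ev} then says precisely that $\{0,d_1,d_2,d_3\}$ and its image $\{7,7-d_1,7-d_2,7-d_3\}$ under $j\mapsto 7-j$ are complementary in $\{0,1,\dots,7\}$, while \eqref{E:m_dim} and \eqref{E:mmu} reduce through \eqref{E:Afw} to $d_1+d_2+d_3=14$. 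The complementarity requirement means $\{d_1,d_2,d_3\}$ is a transversal of the involution $j\mapsto 7-j$ on $\{1,\dots,6\}$, i.e. contains exactly one element of each pair $\{1,6\},\{2,5\},\{3,4\}$; of the eight transversals only $\{3,5,6\}$ sums to $14$. Hence $(d_1,d_2,d_3)=(3,5,6)$ and $(n_1,n_2,n_3)=(3,2,1)$. Interchanging the roles of $U$ and $U^*$---equivalently applying the diagram automorphism $\s_1\leftrightarrow\s_3$---produces the mirror solution $(n_1,n_2,n_3)=(1,2,3)$, and these are the two grading elements in the statement. For the converse I would simply check that each of $\ttT_\varphi=3\ttT^1+2\ttT^2+\ttT^3$ and $\ttT_\varphi=\ttT^1+2\ttT^2+3\ttT^3$ gives the spectrum $\{\pm\tfrac12,\pm\tfrac32,\pm\tfrac52,\pm\tfrac72\}$ on $\bC^4\op(\bC^4)^*$, so that all Hodge numbers equal one; since $\bC^4$ is not self-dual, \eqref{E:RC} and \eqref{E:cpx} hold automatically.

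I expect the main obstacle to be the bookkeeping in the standard-representation case: recognizing that \eqref{E:ev} is exactly the clean combinatorial condition that $\{0,d_1,d_2,d_3\}$ and its image under $j\mapsto 7-j$ be complementary in $\{0,\dots,7\}$, and tracking how the two listed grading elements emerge from the freedom to call one summand $U$ and the other $U^*$. The other delicate point is the elimination of $\tSym^3\bC^4$, which survives the divisibility test of Lemma \ref{L:Ar_pw1} and can be removed only by invoking the finer eigenvalue-gap information of Lemma \ref{L:iv}(b).
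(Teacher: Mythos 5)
Your proposal is correct, and its skeleton matches the paper's: both enumerate the candidates for $U$ via Theorem \ref{T:mf}(a), both eliminate $\tw^2\bC^4$ by exactly the same computation (dimension forces $n_1+n_3=3$, the two resulting grading elements each give $\m(\ttT^\tcpt)=1$ via \eqref{E:Tcpt}, so $U$ is quaternionic by \eqref{E:RQtest}, contradicting \eqref{E:RC}), and both invoke Lemma \ref{L:Ar_pw1} to cut $\tSym^p\bC^4$ down to $p\in\{1,3\}$ (your phrasing ``$p\mid 12$ and $p$ odd'' is in fact cleaner than the paper's). Where you genuinely diverge is the core case. The paper runs $p=1$ and $p=3$ through a single stream built on Lemma \ref{L:iv}(b): integrality of $m-m^*=\tfrac{p}{2}(n_1-n_3)$ forces $n_1\ge 3$, the lemma then gives $m^*=m-1$ and $n_3=1$, so $p(n_1-1)=2$ forces $p=1$, $n_1=3$, the dimension count \eqref{E:m_dim} gives $n_2=2$, and only afterwards is \eqref{E:ev} verified against the weight list. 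You instead use Lemma \ref{L:iv}(b) only to kill $p=3$ (the contradiction $m-m^*\ge 3$ versus $m^*=m-1$), and for $p=1$ you replace the lemma entirely by the observation that \eqref{E:ev} is \emph{equivalent} to the partial sums $\{0,d_1,d_2,d_3\}$ and their reflection under $j\mapsto 7-j$ being complementary in $\{0,\ldots,7\}$, with \eqref{E:mmu} and \eqref{E:m_dim} supplying $d_1+d_2+d_3=14$; the unique admissible transversal $\{3,5,6\}$ then produces $(n_1,n_2,n_3)=(3,2,1)$ and simultaneously settles the converse, since the equivalence runs both ways. This buys you two things the paper's route does not: uniqueness and sufficiency fall out of one computation rather than a derive-then-verify step, and the transversal formulation mechanizes directly to the higher-rank standard-representation searches ($\fsl_8$, $\fsl_{10}$) that the paper in \S\ref{S:Aeg} delegates to LiE. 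What the paper's route buys is economy --- one lemma handles both values of $p$ at once --- and consistency with how the same Lemma \ref{L:iv}(b) is deployed in the type B, D, and rank-five arguments elsewhere in the paper. One small caution: your closing remark that \eqref{E:RC} and \eqref{E:cpx} ``hold automatically'' because $\bC^4$ is not self-dual is slightly glib --- \eqref{E:cpx} is not a formal consequence of non-self-duality --- but nothing rests on it, since those are only necessary conditions for the forward direction and your converse is carried by the explicit spectrum $\{\pm\tfrac12,\pm\tfrac32,\pm\tfrac52,\pm\tfrac72\}$.
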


\begin{proof}
Let $U$ be the irreducible $G_\bC$--module of highest weight $\m$ associated to $V_\bR$ as in \S\ref{S:RCQ}.  By \eqref{E:Umf}, $U$ is necessarily weight multiplicity-free.  By Theorem \ref{T:mf}(a), the highest weight of $U$ is either $\m = p \w_1$ (or the symmetric case $\m=p \w_3$); or $\m=\w_2$.

\medskip  

\noindent{\small{\bf (I)}}  Let's begin with the case that $\m = p \w_1$.  Then $U = \tSym^p\bC^4$ and $V_\bC = U \op U^*$.  We have $r+1 = 4 = 2^2$ and $(r+1)! = 24 = 3 \cdot 2^3$.  By Lemma \ref{L:Ar_pw1}, $p | 3 \cdot 2^2$, but $2 \not= p$.  Therefore, $p=1,3$.  

Observe that 
$$
  m \ \stackrel{\eqref{E:mmu}}{=} \ p\w_1(\ttT_\varphi) \ = \ \tfrac{1}{4}( 3 n_1 + 2 n_2 + n_3 ) \,,
  \tand
  0 \ \stackrel{(\ast)}{<} \ m-m^* \ = \ p\half(n_1 - n_3) \,,
$$
where the inequality $(\ast)$ is due to Lemma \ref{L:iv}(b).  From \eqref{E:ev} we see that any two eigenvalues differ by an integer; therefore, $0<m-m^*\in\bZ$.  Since $p=1,3$, this implies $n_1 - n_3 \in 2 \bZ$.  Then \eqref{E:n>0} yields $1 \le n_3 \le n_1-2$.  In particular, $n_1 \ge 3$.  By Lemma \ref{L:iv}(b), we must have $n_3 = 1$ and $m-m^* = 1$, the latter yielding $p=1$ and $n_1=3$.  From $\tdim V_\bC = 8$ and \eqref{E:m_dim} we see that $\half 7 = m$, yielding $n_2 = 2$.  Thus, $\ttT_\varphi= 3\ttT^1 + 2 \ttT^2 + \ttT^1$ is the first grading element of the proposition.

\smallskip

If, on the other hand, we take $\m = p \w_3$, then a similar argument again yields $p=1$ and $\ttT_\varphi$ is the second grading element of the proposition.  In either case ($\m=\w_1$ or $\m = \w_3$), it is easily checked that \eqref{E:ev} holds, using the fact that the weights of $V_\bC$ are 
\begin{eqnarray*}
  \{ \w_1 \,,\ \w_1 - \s_1 \,,\ \w_1 - (\s_1+\s_2) \,,\
  \w_1 - (\s_1+\s_2+\s_3) \} & \cup &\\
  \{ \w_3 \,,\ \w_3 - \s_3 \,,\ \w_3 - (\s_3+\s_2) \,,\
  \w_3 - (\s_3+\s_2+\s_1) \} & & 
\end{eqnarray*}

\medskip  

\noindent{\small{\bf (II)}}  Next, consider the case that $\m = \w_2$.  We have $U = \tw^2\bC^6$.  Since $U$ is self-dual, $U$ is either real or quaternionic (\S\ref{S:RCQ}).  By \eqref{E:RC}, if $V$ is principal, then $U$ is necessarily real, so that $V_\bC = U$.  By Lemma \ref{L:iv}(a), $n_2 = 1$.  The highest weight of the irreducible $V_\bC$ is $\w_2 = \half(\s_1+2\s_2+\s_3)$.  By \eqref{E:m_dim} and \eqref{E:mmu}, we have 
$$
  \half 5 \ = \ \half( n_1 + 2n_2 + n_3 ) = 1 + \half(n_1+n_3) \,,
$$
so that $n_1 + n_3 = 3$.  From \eqref{E:n>0}, we see that either $n_1=1$ and $n_3=2$, or $n_1=1$ and $n_3=2$.   Thus,
\begin{equation} \label{E:ext5}
  \ttT_\varphi \ = \ \ttT^1 \,+\, \ttT^2 \,+\, 2\,\ttT^3 \quad\hbox{or}\quad
  \ttT_\varphi \ = \ 2\,\ttT^1 \,+\, \ttT^2 \,+\, \ttT^3 \,.
\end{equation}
For the first grading element, we have $\ttT^\tcpt = 2 \ttT^3$, cf. \eqref{E:Tcpt}.  So $\m(\ttT^\tcpt) = \w_2(2\ttT^3) = 1$ implying $U$ is quaternionic, a contradiction of \eqref{E:RC}.  Similarly, for the second grading element, $\ttT^\tcpt = 2 \ttT^1$.  So $\m(\ttT^\tcpt) = \w_2(2\ttT^1) = 1$ and $U$ is quaternionic, again contradicting \eqref{E:RC}.  Therefore, there exists no principal Hodge representation $(V,\varphi)$ with $V = \tw^2\bC^4$.
\end{proof}

\begin{remark}
With respect to part (II) of the proof above, observe that the weights of $\tw^2\bC^4$ are 
$$
  (000) \,,\ (010) \,,\ (110) \,,\ (011) \,,\ (111)\,,\ (121) \,.
$$
(Above, we utilize the notation $(\lambda^1\lambda^2\lambda^3) = \w_2 - \lambda^i \s_i$ introduced in \S\ref{S:wts}.)  From this, it is easily checked that both the grading elements \eqref{E:ext5} yield eigenspace decompositions of $\tw^2\bC^4$ with multiplicity-free eigenvalues $\pm\{ \half , \half 3 , \half 5 \}$.  Thus, the following holds:  \emph{Let $G$ be a Hodge group with complex Lie algebra $\fg_\bC = \fsl_4\bC$.  Assume that $(V,\varphi)$ is a Hodge representation with the property that $V_\bR$ is irreducible and $V_\bC = (\tw^2\bC^4) \op (\tw^2\bC^4)^*$.  (That is, $U = \tw^2\bC^4$ is quaternionic.)  Then the Hodge numbers are $\bh = (2,2,\ldots,2,2)$ if only if $\ttT_\varphi$ is one of \eqref{E:ext5}.}
\end{remark}

\subsection{The rank five case} \label{S:rank5}

\begin{proposition} \label{P:rank5}
Let $G$ be a Hodge group with complex Lie algebra $\fg_\bC = \fsl_6\bC$.  Assume that $(V,\varphi)$ is a Hodge representation with the property that $V_\bR$ is an irreducible $G_\bR$--module.  Let $\ttT_\varphi$ be the associated grading element \emph{(\S\ref{S:GE})}, and assume the normalization \eqref{E:n>=0} holds.  Then the Hodge representation is principal if and only if either: \emph{\bf (a)} $V_\bC = \bC^6 \op (\bC^6)^*$ and 
\begin{subequations}
\begin{equation} \label{E:A5_w1}
  \begin{array}{rcl}
  \ttT_\varphi & = & 
  2\,\ttT^1 \,+\, 4\,\ttT^2 \,+\, \ttT^3 \,+\, \ttT^4 \,+\, 2\,\ttT^5 \,, \quad\hbox{or}\\
  \ttT_\varphi & = & 
  2\,\ttT^1 \,+\, \ttT^2 \,+\, \ttT^3 \,+\, 4\,\ttT^4 \,+\, 2\,\ttT^5 \,;
  \end{array}
\end{equation}
or \emph{\bf (b)} $V_\bC = \tw^3\bC^6$ and 
\begin{equation} \label{E:A5_w3}
  \begin{array}{rcl}
  \ttT_\varphi & = &  3\,\ttT^1 \,+\, 2\,\ttT^2 \,+\, \ttT^3 \,+\, 
  \ttT^4 \,+\, 7\,\ttT^5 \,,\quad\hbox{or}\\
  \ttT_\varphi & = & 7\,\ttT^1 \,+\, \ttT^2 \,+\, \ttT^3 \,+\, 
  2\,\ttT^4 \,+\, 3\,\ttT^5 \,.
  \end{array}
\end{equation}
\end{subequations}
\end{proposition}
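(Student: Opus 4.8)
The plan is to run the same scheme as in Propositions \ref{P:rank1} and \ref{P:rank3}. Since $V_\bR$ is irreducible, \S\ref{S:RCQ} attaches to it an irreducible $\fg_\bC=\fsl_6\bC$--module $U$, and \eqref{E:Umf} together with Theorem \ref{T:mf}(a) restricts the highest weight $\m$ of $U$ to $\m=p\,\w_1$, $\m=p\,\w_5$, or $\m\in\{\w_2,\w_3,\w_4\}$. Using $\w_k^*=\w_{6-k}$ from \eqref{E:Adual} and the normalization \eqref{E:m*<m}, these organize into three families up to the duality $U\leftrightarrow U^*$ (equivalently the diagram symmetry $\s_i\leftrightarrow\s_{6-i}$): the symmetric powers $U=\tSym^p\bC^6$ (dual $\w_5$), the module $U=\tw^2\bC^6$ (dual $\tw^4\bC^6$), and the self-dual $U=\tw^3\bC^6$. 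I will treat each family in turn, and the diagram symmetry will account for the fact that each of \eqref{E:A5_w1} and \eqref{E:A5_w3} comes in a dual pair.

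First I would dispose of the two families that yield no principal representation, or reduce them to $p=1$. For $U=\tw^2\bC^6$ (and its dual $\tw^4\bC^6$), \eqref{E:Adual} shows $U$ is complex, so $V_\bC=U\op U^*$, and Lemma \ref{L:Ar_w2k} with $k=1$ (resp. $k=2$) would require $6=r+1\equiv0\bmod4$, which is false; hence this family contributes nothing. For $U=\tSym^p\bC^6$, again $V_\bC=U\op U^*$, and Lemma \ref{L:Ar_pw1} gives $p\mid360$ with $3\nmid p$, so $p\in\{1,2,4,5,8,10,20,40\}$. To cut this down I would combine \eqref{E:m_dim}, \eqref{E:mmu} and the expression \eqref{E:Afw} for $\w_1$: writing $N=5n_1+4n_2+3n_3+2n_4+n_5\in\bZ$, one gets $m=\tfrac{p}{6}N=\binom{p+5}{5}-\tfrac12$, i.e. $pN=6\binom{p+5}{5}-3$. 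The right-hand side is odd, forcing $p$ odd, so $p\in\{1,5\}$; and $p=5$ gives $5N=1509$, which is not divisible by $5$. Thus $p=1$ and $U=\bC^6$.

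It then remains to solve the two surviving cases explicitly. For $V_\bC=\bC^6\op(\bC^6)^*$ (dimension $12$, $m=\tfrac{11}2$), writing $a_i=\nu_i(\ttT_\varphi)$ for the weights $\nu_i$ of $\bC^6$, the principality condition \eqref{E:ev} is exactly that $\{\pm a_1,\ldots,\pm a_6\}$ be the twelve half-integers $\pm\tfrac12,\ldots,\pm\tfrac{11}2$, with $a_1>\cdots>a_6$ and $\sum_i a_i=0$. A short sign-selection argument shows this determines $(a_i)$ uniquely up to the reversal $a_i\mapsto-a_{7-i}$, yielding consecutive differences $n_i=a_i-a_{i+1}$ equal to $(2,4,1,1,2)$ or its reverse $(2,1,1,4,2)$; these are the two grading elements of \eqref{E:A5_w1}. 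For $U=\tw^3\bC^6$, which is self-dual, \eqref{E:RC} forces $U$ to be real (the quaternionic option would double weights, violating \eqref{E:mf}), so $V_\bC=U$ has dimension $20$ and $m=\tfrac{19}2$. Here the eigenvalues are the sums $a_i+a_j+a_k$ ($i<j<k$), and I would peel off the top of the spectrum: the second eigenvalue $m-n_3$ forces $n_3=1$, the next level forces $\min(n_2,n_4)=1$, and \eqref{E:mmu} (that $\w_3(\ttT_\varphi)=\tfrac{19}2$) then gives $\{n_2,n_4\}=\{1,2\}$ and $n_1+n_5=10$; imposing that the full spectrum be consecutive and multiplicity-free (\eqref{E:ev} and \eqref{E:Tmf}) pins down $(n_1,\ldots,n_5)=(3,2,1,1,7)$ or its reverse $(7,1,1,2,3)$, i.e. \eqref{E:A5_w3}. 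In each case I would finish by confirming via \eqref{E:Tcpt} and \eqref{E:RQtest} that the module has the asserted real/complex type (e.g. $\ttT^\tcpt=2\ttT^2$ gives $\w_3(\ttT^\tcpt)=2$, so $\tw^3\bC^6$ is real).

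The main obstacle is the bookkeeping in the self-dual case $\tw^3\bC^6$: unlike the symmetric-power and $\tw^2$ families, there is no single lemma that settles it, and one must actually track the twenty eigenvalues $a_i+a_j+a_k$ and verify both that they can be made consecutive and that no two coincide, the latter multiplicity-free condition \eqref{E:Tmf} being what ultimately separates the two admissible grading elements from the many near-misses. The elimination of $p\ge2$, by contrast, is clean once one notices that $6\binom{p+5}{5}-3$ is odd, which is the step I would highlight.
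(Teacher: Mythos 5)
Your proposal is correct, and its skeleton is the paper's: the same reduction via \eqref{E:Umf} and Theorem \ref{T:mf}(a) to the families $p\,\w_1$ (and $p\,\w_5$), $\w_2$ (and $\w_4$), $\w_3$, and the same use of Lemma \ref{L:Ar_pw1} plus parity (oddness of $6\binom{p+5}{5}-3$ forces $p$ odd, and $5\nmid 1509$ kills $p=5$) to reduce to $p=1$. Where you genuinely diverge is in how the surviving cases are settled. For $\w_2$ you invoke Lemma \ref{L:Ar_w2k}, which does apply ($k=1$, $2k\ne\half(r+1)=3$, so $6\equiv0\bmod 4$ would be forced); the paper instead reruns the underlying parity computation ($\half 29=\third(2n_1+4n_2+3n_3+2n_4+n_5)$ is impossible) --- the same argument, differently packaged. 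More substantively, the paper determines the grading elements \eqref{E:A5_w1} and \eqref{E:A5_w3} ``with the assistance of LiE,'' whereas you give hand arguments. Your sign-selection argument for $V_\bC=\bC^6\op(\bC^6)^*$ is complete and checks out: after doubling, the condition $\sum_i a_i=0$ asks which subsets of $\{1,3,5,7,9,11\}$ sum to $18$, and the only solutions $\{7,11\}$ and $\{1,3,5,9\}$ yield exactly $(n_i)=(2,4,1,1,2)$ and $(2,1,1,4,2)$; this replaces the software step by a short verifiable computation, a real gain in self-containedness. For $\tw^3\bC^6$ your peeling is right as far as it goes ($n_3=1$, then $\min(n_2,n_4)=1$, both forced by \eqref{E:ev} and \eqref{E:n>0}), but be aware that $\{n_2,n_4\}=\{1,2\}$ does \emph{not} follow from \eqref{E:mmu} alone: taking $n_4=1$, the eigenvalue $m-3$ can alternatively be produced by $n_5=1$ (via the weight $a_1+a_2+a_6$), and that branch must be killed by a multiplicity collision --- it forces $n_2=3$, whereupon $a_1+a_4+a_5$ and $a_1+a_3+a_6$ both sit at $m-6$, violating \eqref{E:Tmf}. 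Once that branch is eliminated, \eqref{E:mmu} gives $n_1+n_5=10$, and matching the remaining eight eigenvalues $\{m-2-n_5,\,m-4-n_5,\,m-5-n_5,\,m-6-n_5\}\cup\{m-3-n_1,\,m-4-n_1,\,m-5-n_1,\,m-7-n_1\}$ against $\{m-6,\ldots,m-13\}$ singles out $(n_1,n_5)=(3,7)$. So your outline is sound and completable by hand; the only caveat is that the ``pinning down'' in the self-dual case needs these extra collision arguments, a level of detail the paper itself sidesteps by appealing to a computer.
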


\begin{proof}
Let $U$ be the irreducible $G_\bC$--module of highest weight $\m$ associated to $V_\bR$ as in \S\ref{S:RCQ}.  By \eqref{E:Umf}, $U$ is necessarily weight multiplicity-free.  By Theorem \ref{T:mf}(a), the highest weight of $U$ is either $\m = p \w_1, p\w_r$, or $\m=\w_k$ with $k=2,3,4$.  The cases $\m = p\w_1$ and $\m = p\w_r$ are symmetric, as are the cases $\m = \w_2$ and $\m = \w_4$.  So it suffices to consider the three cases $\m = p\w_1, \w_2 , \w_3$.

\medskip  

\noindent{\small{\bf (I)}}  Let's begin with the case that 
$$
  \m \ = \ p\,\w_1 \ = \ p\, \tfrac{1}{6}
  \left( 5 \s_1 + 4 \s_2 + 3 \s_3 + 2 \s_4 + \s_5 \right) \,.
$$
Then $U = \tSym^p\bC^6\not\simeq U^*$ and $V_\bC = U \op U^*$.  By Lemma \ref{L:Ar_pw1}, $2p$ divides $6!$; equivalently, $p$ divides $5\cdot3^2\cdot2^3$.  The lemma also asserts that $6$ does not divide $2p$.  Therefore,
$$
  p \ = \ 5^a\cdot2^b \,,\quad\hbox{with}\quad \ 
  0 \le a \le 1 \ \hbox{ and } \ 0 \le b \le 3 \,.
$$
By \eqref{E:m_dim} and \eqref{E:mmu}, 
\begin{equation} \label{E:5dim}
  2\,\tdim_\bC U \,-\, 1 \ = \ 
  \third \, 5^a 2^b \, 
  \left( 5 n_1 + 4 n_2 + 3 n_3 + 2 n_4 + n_5 \right)
\end{equation}
Since the left-hand side of \eqref{E:5dim} is odd, it is necessarily the case that $b=0$.  Therefore, $p = 1,5$.

Suppose that $p=5$.  Then $\tdim_\bC U = \binom{10}{5} = 6^2\cdot 7$.  Since $2\,\tdim_\bC U \,-\, 1 = 2\cdot 6^2\cdot 7 - 1 = 503$ is not divisible by $5$, \eqref{E:5dim} forces $b=0$.  Thus, $p=1$, and we have $V_\bC = \bC^6 \op (\bC^6)^*$.  With the assistance of mathematical software (such as the representation theory package LiE) one may confirm that \eqref{E:ev} holds if and only if $\ttT_\varphi$ is the first grading element of \eqref{E:A5_w1}.

A nearly identical argument with $\m = p \w_5$ in place of $\m = p \w_1$ yields $p=1$ and $\ttT_\varphi$ is the second grading element of \eqref{E:A5_w1}.

\medskip  

\noindent{\small{\bf (II)}}  Next, consider the case that 
$$
  \m \ = \ \w_2 \ = \ \third( 2\s_1 + 4\s_2 + 3\s_3 + 2\s_4 + \s_5) \,.
$$
We have $U = \tw^2\bC^6\not\simeq \tw^4(\bC^6) = U^*$ and $V_\bC = U \op U^*$.  From \eqref{E:m_dim} and $\tdim_\bC U = \binom{6}{2} = 15$, we obtain
$$
  m \ = \ \half 29 \,.
$$
Then \eqref{E:mmu} implies 
$$
  \half 29 \ = \ \third( 2n_1 + 4n_2 + 3n_3 + 2n_4 + n_5) \,.
$$
This is not possible, as $2$ does not divide $29\cdot3$.

\medskip

\noindent{\small{\bf (III)}}  Finally, we consider the case that 
$$
  \m \ = \ \w_3 \ = \ \half ( \s_1 + 2\s_2 + 3\s_3 + 2\s_4 + \s_5) \,.
$$
We have $U = \tw^3\bC^6 \simeq U^*$.  By \S\ref{S:RCQ}, $U$ is either real or quaternionic.  By \eqref{E:RC}, $U$ is necessarily real, so that $V_\bC = U$.  Again, with the assistance of LiE, one may confirm that \eqref{E:ev} holds if and only if \eqref{E:A5_w3} holds.  

For the first grading element of \eqref{E:A5_w3}, we have $\ttT^\tcpt = 2\,\ttT^2$, cf. \eqref{E:Tcpt}.  Therefore, $\m(\ttT^\tcpt) = 2$, and $U$ is a real representation of $G_\bR$ by \eqref{E:RQtest}, and as required by \eqref{E:RC}.  Likewise, for the second grading element of \eqref{E:A5_w3}, we have $\ttT^\tcpt = 2\,\ttT^4$ and $\m(\ttT^\tcpt) = 2$; again, $U$ is a real representation.  
\end{proof}

\subsection{The standard representation: examples} \label{S:Aeg}

\begin{example*}[Rank seven]
Let $G$ be a Hodge group with complex Lie algebra $\fg_\bC = \fsl_8\bC$.  Assume that $(V,\varphi)$ is a Hodge representation with the property that $V_\bR$ is an irreducible $G_\bR$--module, and $V_\bC = \bC^8 \op (\bC^8)^*$.  Let $\ttT_\varphi$ be the associated grading element (\S\ref{S:GE}), and let $\m = \w_1$ be the highest weight of $\bC^8$.  Assume the normalizations \eqref{E:n>=0} and \eqref{E:m*<m} hold.  A computation with LiE indicates that the Hodge representation is principal if and only if $\ttT_\varphi = n_i \ttT^i$ is given by one of
\begin{eqnarray*}
  (n_1,n_2,\ldots,n_8) & = & (1,5,1,3,1,1,1) \,,\ 
  (2,3,2,2,2,1,2) \,,\\
  &  & (3,1,3,2,1,3,1) \,,\ (3,2,1,2,3,2,1) \,.
\end{eqnarray*}

Suppose, on the other hand, that $\m = \w_7$ and the normalizations \eqref{E:n>=0} and \eqref{E:m*<m} hold.  Then $\ttT_\varphi = \sum_{i} n_{8-i}\,\ttT^i = n_7 \ttT^1 + n_6 \ttT^2 + \cdots + n_2\ttT^6 + n_1\ttT^7$, where $(n_1,\ldots,n_7)$ is one of the four above.
\end{example*}

\begin{example*}[Rank nine]
Let $G$ be a Hodge group with complex Lie algebra $\fg_\bC = \fsl_{10}\bC$.  Assume that $(V,\varphi)$ is a Hodge representation with the property that $V_\bR$ is an irreducible $G_\bR$--module, and $V_\bC = \bC^{10} \op (\bC^{10})^*$.  Let $\ttT_\varphi$ be the associated grading element (\S\ref{S:GE}), and let $\m = \w_1$ be the highest weight of $\bC^{10}$.  Assume the normalizations \eqref{E:n>=0} and \eqref{E:m*<m} hold.  A computation with LiE indicates that the Hodge representation is principal if and only if $\ttT_\varphi = n_i \ttT^i$ is given by one of
\begin{eqnarray*}
  (n_1,n_2,\ldots,n_{10}) & = & 
   (1,2,6,2,1,1,1,1,2) \,,\ (1,3,4,2,2,1,1,2,1) \,,\\
   & & (1,4,2,3,1,2,2,1,1) \,,\ (2,1,5,2,2,1,1,1,3) \,,\\
   & & (2,2,3,3,1,2,1,2,2) \,,\ (2,3,1,4,1,1,3,1,2) \,,\\
   & & (2,4,1,1,1,5,1,1,2) \,,\ (3,1,2,4,1,1,2,3,1) \,,\\
   & & (3,2,2,1,1,4,2,2,1) \,,\ (4,1,1,2,1,3,4,1,1) \,.
\end{eqnarray*}
\end{example*}

Suppose, on the other hand, that $\m = \w_9$ and the normalizations \eqref{E:n>=0} and \eqref{E:m*<m} hold.  Then $\ttT_\varphi = \sum_{i} n_{10-i}\,\ttT^i = n_9 \ttT^1 + n_8 \ttT^2 + \cdots + n_2\ttT^8 + n_1\ttT^9$, where $(n_1,\ldots,n_9)$ is one of the ten above.

\bibliography{refs.bib}
\bibliographystyle{plain}
\end{document}